\documentclass[10pt,a4paper]{article}

\title{Parabolic eigenvarieties via overconvergent cohomology}
\author{Daniel Barrera Salazar and Chris Williams}
\date{}

\newcommand{\Addressesshort}{{
		\footnotesize
		\setlength{\parindent}{0pt}
		Daniel Barrera Salazar; Universidad de Santiago de Chile $\ \cdot\ $
		\texttt{daniel.barrera.s@usach.cl}
		
		Chris Williams; University of Warwick $ \ \cdot \ $
		\texttt{christopher.d.williams@warwick.ac.uk}	
}}

%
%

\usepackage[margin=1.12in]{geometry}
\usepackage{comment}
\setlength{\parskip}{0pt}

\renewcommand{\baselinestretch}{1.09}

%
%
\makeatletter 
\def\input@path{{../}} 
\makeatother
\usepackage{preamble}
\usepackage{chngcntr}
\counterwithin{table}{section}
\newcommand{\sar}[2]{\ar@{}[#1]|-*[@]{#2}}
\begin{document}
\maketitle

\begin{abstract}
	Let $\cG$ be a connected reductive group over $\Q$ such that $G = \cG/\Qp$ is quasi-split, and let $Q \subset G$ be a parabolic subgroup. We introduce parahoric overconvergent cohomology groups with respect to $Q$, and prove a classicality theorem showing that the small slope parts of these groups coincide with those of classical cohomology. This allows the use of overconvergent cohomology at parahoric, rather than Iwahoric, level, and provides flexible lifting theorems that appear to be particularly well-adapted to arithmetic applications. When $Q$ is a Borel, we recover the usual theory of overconvergent cohomology, and our classicality theorem gives a stronger slope bound than in the existing literature. We use our theory to construct $Q$-parabolic eigenvarieties, which parametrise $p$-adic families of systems of Hecke eigenvalues that are finite slope at $Q$, but that allow infinite slope away from $Q$.
\end{abstract}

\setcounter{tocdepth}{2}
\footnotesize
\tableofcontents \normalsize
\setlength{\parskip}{3pt}

\section{Introduction}

\subsection{Context}
Hida and Coleman families describe the variation of automorphic representations as their weight varies $p$-adic analytically. They have become ubiquitous in many areas of number theory, and are vital tools in the study of the Langlands program and the Bloch--Kato conjectures. Their behaviour is captured geometrically in the theory of \emph{eigenvarieties}. To construct and study an eigenvariety, one requires:
\begin{itemize}\setlength{\itemsep}{0pt}
	\item a rigid analytic \emph{weight space} $\cW$, encoding $p$-adic analytic variation of weights;
	\item for each $\lambda \in \cW$, a space $M_\lambda$ that varies analytically in $\lambda$, and which carries an action of a suitable Hecke algebra;
	\item and a notion of `classical structure/classicality', relating finite-slope systems of Hecke eigenvalues appearing in $M_\lambda$ to those arising from $p$-refinements of automorphic representations of weight $\lambda$.
\end{itemize}
The eigenvariety is then a rigid analytic space $\cE$, with a weight map $w : \cE\to \cW$, whose points lying above a weight $\lambda$ parametrise finite-slope systems of Hecke eigenvalues that appear in $M_\lambda$. Via the classical structure these relate to eigensystems attached to automorphic representations.

Let $\cG$ be a connected reductive group over $\Q$, and suppose $G \defeq \cG_{/\Qp}$ is quasi-split. In this case Hansen \cite{Han17} has constructed eigenvarieties for $\cG$ by taking $M_\lambda$ to be \emph{overconvergent cohomology groups}; his work generalises earlier constructions of Ash--Stevens and Urban \cite{AS08,Urb11}. Cohomological automorphic representations of $\cG(\A)$ of weight $\lambda$ arise in the cohomology of locally symmetric spaces $S_K$ for $\cG$, of level $K$, with coefficients in an algebraic representation $V_\lambda^\vee$ of weight $\lambda$. Overconvergent cohomology is defined by replacing $V_\lambda^\vee$ with an (infinite-dimensional) module $\cD_{\lambda}^G$ of $p$-adic distributions. The classical structure is then furnished by a \emph{classicality theorem}, which says that the `non-critical/small slope' parts of the overconvergent and classical cohomology coincide, so that non-critical slope systems of Hecke eigenvalues in $M_\lambda$ are classical. Here the \emph{slope} of an eigensystem is the $p$-adic valuation of the $U$ eigenvalue (for an appropriate `controlling operator' $U$). A slope 0 eigensystem is \emph{ordinary}. 

This classicality theorem was first introduced in \cite{Ste94} for modular forms, and is a cohomological analogue of Coleman's classicality criterion \cite{Col96}. It has, in its own right, had far-reaching arithmetic consequences: to give a brief flavour, it has been used to construct $p$-adic $L$-functions \cite{PS11}, to study $\mathcal{L}$-invariants \cite{GS93}, to construct Stark--Heegner points \cite{Dar01}, and to give conjectural analogues of class field theory over real quadratic fields \cite{DV18}.

\subsection{Parabolic families and classicality} 
In the usual theory, $p$-adic families for $\cG$ encode variation with respect to a Borel subgroup $B \subset G$. In particular, $U$ is a $B$-controlling operator in the sense of \S\ref{sec:controlling}, the natural generalisation of the $U_p$ operator for modular forms. Then the eigenvariety encodes $U$-finite-slope eigensystems, and the non-critical slope bound depends on $U$. 

All of the above is defined using the Iwahori subgroup at $p$. When applying this to the study of an automorphic representation $\pi$, this forces one to work at Iwahoric level, studying `full' $p$-refinements of $\pi$. In practice, however, it is frequently more natural to work only at \emph{parahoric} level for a parabolic subgroup $Q \subset G$, corresponding to a weaker $p$-refinement. In this setting, passing further to full Iwahoric level often requires stronger hypotheses and a loss of information.  

In this paper, we present a refined version of  overconvergent cohomology which applies to $Q$-parahoric level, and prove a classicality theorem for this refined theory. We vary this in $p$-adic families and use it to construct `parabolic eigenvarieties', parametrising parabolic families of automorphic representations. This approach brings two further benefits:
\begin{itemize}\setlength{\itemsep}{0pt}
	\item the criterion for non-critical slope is weaker, giving more control in the classicality theorem;
	\item the resulting parabolic families parametrise $Q$-finite-slope eigensystems, without requiring finite slope away from $Q$.
\end{itemize}
This is offset by the fact that these spaces vary over smaller-dimensional weight spaces.

A very special case of this is as follows. Suppose $F$ is a real quadratic field in which $p$ splits as $\pri\overline{\pri}$, and let $\cG = \mathrm{Res}_{F/\Q} \GL_2$. Then $G = \GL_2 \times \GL_2$, and $U_p = U_{\pri}U_{\overline{\pri}}$ is a $B$-controlling operator. Let $E/F$ be a modular elliptic curve with good ordinary reduction at $\pri$ and bad (additive) reduction at $\overline{\pri}$. The attached system of Hecke eigenvalues has infinite slope for $U_{\overline{\pri}}$ and hence $U_p$, and does not appear in the (2-dimensional) Hilbert eigenvariety. However, we may take a parabolic $Q = B_2 \times \GL_2 \subset G$, where $B_2$ is the Borel in $\GL_2$; then $U_{\pri}$ is a $Q$-controlling operator, and the ordinary $\pri$-refinement of $E$ satisfies the $Q$-classicality theorem, giving a 1-dimensional `$\pri$-adic family' through $E$. Moreover, this classicality yields a class in the $\pri$-adic overconvergent cohomology attached to $E$, which has been used to construct $p$-adic points on $E$ \cite{GMS15}.

\subsection{Methods and results}
Our parahoric overconvergent cohomology groups are defined using \emph{parahoric\footnote{Though we only consider parahoric subgroups attached to parabolics, we write `parahoric distributions/overconvergent cohomology' to avoid conflict with the established definition of parabolic cohomology.} distribution modules}. Any weight $\lambda$ is naturally a character on the torus $T(\Zp)$; we are most interested in those that are algebraic dominant, and call these \emph{classical}. The typical coefficient modules used in overconvergent cohomology are:
\begin{itemize}\setlength{\itemsep}{0pt}
	\item overconvergent coefficients $\cD_\lambda^G$, dual to the \emph{locally analytic} induction of $\lambda$ to the Iwahori subgroup of $G(\Zp)$,
	\item and classical coefficients $V_\lambda^\vee$, dual to the \emph{algebraic} induction of $\lambda$ to $G(\Zp)$.
\end{itemize} 
We consider a hybrid construction, defining spaces $\D_\lambda^Q$ by taking the algebraic induction of $\lambda$ to the Levi subgroup $L_Q$ of $Q$, then (locally) analytically inducing to the parahoric subgroup for $Q$, then taking the dual. These groups are naturally quotients of $\cD_\lambda^G$. Moreover if we take $Q = B$ to be the Borel, we recover $\cD_{\lambda}^G$; and if we take $Q=G$ the `trivial' parabolic we recover $V_{\lambda}^\vee$. All of this is described in \S\ref{sec:parabolic distributions}, and summarised in Table \ref{table of coeffs}.

In \S\ref{sec:classicality}, we construct a parahoric version of Jones--Urban's locally analytic BGG resolution. This is an analytic version of the main result of \cite{Lep77}, and provides a tool for our main result, which is a $Q$-classicality theorem giving an isomorphism between the small-slope parts of cohomology with $\D_{\lambda}^Q$ and $V_\lambda^\vee$ coefficients.  In particular, in Theorem \ref{thm:Q-classicality} we prove:

\begin{theorem-intro}\label{thm:intro}
	Let $Q = P_0 \subset P_{1} \subset \cdots \subset P_m = G$ be a maximal chain of parabolics containing $Q$, and let $U_Q$ be a $Q$-controlling operator which factorises as $U_Q = U_1 \cdots U_m$, where each $U_i\cdots U_m$ is a $P_{i-1}$-controlling operator. Let $\phi$ be a system of Hecke eigenvalues and $\lambda$ a classical weight.
	
There exist precise bounds $h_i \in \Q_{>0}$, depending on $\lambda$, such that if $v_p(\phi(U_i)) < h_i$ for each $i$, then the $\phi$-parts of the weight $\lambda$ classical and $Q$-overconvergent cohomology are isomorphic.
\end{theorem-intro}

If $v_p(\phi(U_i)) < h_i$ for each $i$, we say $\phi$ has \emph{$Q$-non-critical slope}. The notion of being a controlling operator, and the precise values of $h_i$, are described in terms of root data and Weyl groups, which we recap in \S\ref{sec:root data}. We describe this theorem in a number of explicit cases in Examples \ref{ex:classicality}.  

\begin{remark*}
If $Q$ is the Borel, the most general classicality theorems for (Iwahoric) overconvergent cohomology that currently appear in the literature -- for example, \cite[Prop.\ 4.3.10]{Urb11} -- require $v_p(\phi(U_Q)) < \mathrm{min}_i(h_i)$, so even in this case we give a significant improvement on the known range of non-critical slopes. Such improved ranges were known to exist in other settings (for example, see \cite[\S4.4]{EmertonJacquetI}), and we believe an analogue for overconvergent cohomology was expected by experts. However, it does not appear in the literature, which we aim to rectify here.
\end{remark*}

The parahoric overconvergent cohomology groups can be naturally varied analytically in the weight, from which the construction of $p$-adic families and eigenvarieties -- and their basic properties -- is fairly standard. In particular, we construct rigid analytic spaces whose points parametrise $Q$-finite slope systems of eigenvalues, and coherent sheaves on these spaces that interpolate $Q$-finite slope eigenspaces in classical cohomology. We describe this in \S\ref{sec:parabolic eigenvarieties}. In \S\ref{sec:cuspidal families}, we give sufficient conditions for the existence of parabolic families of cuspidal automorphic representations.

\subsection{A note on assumptions}
We will use results from \cite{Urb11} and \cite{Han17}, which work in slightly different settings to us. In \cite{Urb11}, Urban's main applications are in the case where $\cG$ is quasi-split at $p$ and satisfies the Harish-Chandra condition at infinity (i.e. $\cG^{\mathrm{der}}(\R)$ admits discrete series). The Harish-Chandra condition is assumed only to control the geometry of the eigenvarieties he constructs. In particular it is not used anywhere in \S2,3 of \cite{Urb11}, which covers the results we use; in these sections Urban sets up the theory of (Iwahoric) overconvergent cohomology assuming only $\cG_{/\Qp}$ is quasi-split. (We indicate briefly where the Harish-Chandra condition is useful in our setting. In the notation of Definition \ref{def:defect} below, it implies that at any $Q$-non-critical slope cuspidal point $x$ we have $\ell_Q(x) = 0$; and thus by Proposition \ref{prop:dimension}, any irreducible component of the parabolic eigenvariety through $x$ has the same dimension as the weight space. Without the Harish-Chandra condition this might not be true).

In \cite{Han17}, Hansen works under the assumption that $\cG_{/\Qp}$ is split instead of quasi-split. This appears to have been done only for convenience, since (as explained in \cite[\S3.1.1,\S3.2]{Urb11}) the formalism of locally analytic distributions goes through equally well when $\cG$ is quasi-split, up to keeping track of a finite field extension (the field $L$ for us). Moreover Hansen's main tools -- the spectral sequences -- require only formal properties of distributions that hold in the quasi-split case.

In fact, as remarked on p.1712, footnote 16 of \cite{Urb11}, it should be possible to drop the quasi-split requirement altogether if one uses Bruhat--Tits buildings. One then replaces the parahoric subgroup with any open compact subgroup with a Bruhat--Iwahori decomposition. This approach is taken in \cite{Loe11} and \cite{HL11}, where there are no assumptions at all on $\cG$ at $p$. We have opted to stick to the notationally much simpler, but still very general, quasi-split setting.

Finally, we choose to use compactly supported cohomology throughout this paper as it best suits our future applications, but all of the results go through identically replacing this with singular cohomology (and, in \S\ref{sec:parabolic eigenvarieties}, Borel--Moore homology with singular homology).

\subsection{Comparison to the literature}\label{sec:comparison}

Constructions of parabolic families/eigenvarieties have been previously given using methods different to this paper. The theory was introduced for Hida families in \cite{HidP-ord}, and other papers on this subject include \cite{Loe11} (for unitary groups), \cite{Pil12} (Hida theory for Siegel modular forms), and in particular \cite{HL11}, which treats a very general setting using Emerton's completed cohomology. They are also related to the $\mu$-ordinary setting of \cite{EM19}. Parabolic families have important applications in arithmetic: for example, in the case of $\cG = \mathrm{GSp}_4$, Siegel-parabolic families are used in \cite[\S17]{LZ20}, where new cases of the Bloch--Kato conjecture are proved; when $\cG$ is a definite unitary group, parabolic eigenvarieties were used in \cite{Che20} to attach Galois representations to certain regular, polarised automorphic representations of $\mathrm{GL}_n$; and parabolic Hida families are used in upcoming work of Caraiani--Newton to answer deep questions about local--global compatibility for Galois representations.

In this spirit, the main motivation for giving a \emph{new} version of this theory comes through arithmetic applications, for which parahoric overconvergent cohomology appears particularly well-suited; it adapts a very useful arithmetic tool (overconvergent cohomology) to a setting of increasing arithmetic interest (parahoric level/families).

 This utility is illustrated in the example of $\GL_2$ over a number field $F$, where special cases of the above theory have appeared repeatedly:

\begin{itemize}\setlength{\itemsep}{0pt}
	\item[--]  In the case where $F$ is totally real, partial $\pri$-adic families were used in \cite{BDJ17} and \cite{JoNewParity}, with applications to the trivial zero and parity conjectures respectively. 
	\item[--] For more general $F$, versions of Theorem \ref{thm:intro} have been proved and used to construct Stark--Heegner points on elliptic curves \cite{Tri06, GM14, GMS15}, and when $F$ is imaginary quadratic, to construct conjectural Stark--Heegner cycles attached to Bianchi modular forms \cite{VW19}. It was also used in \cite{BW17} to construct $\pri$-adic $\mathcal{L}$-invariants and prove an exceptional zero conjecture for Bianchi modular forms. 
	\item[--] Moreover, versions of the refined slope conditions given by Theorem \ref{thm:intro} were used in \cite{Wil17} and \cite{BW_CJM} to construct $p$-adic $L$-functions attached to automorphic forms for $\GL_2$.
\end{itemize}
In forthcoming work with Dimitrov, we use Theorem \ref{thm:intro} in the setting of $\GL_{2n}$ over totally real fields, using the parabolic $Q$ with Levi $\GL_n\times\GL_n$, to construct $p$-adic $L$-functions attached to $Q$-non-critical conjugate-symplectic automorphic representations of $\GL_{2n}$. We use the results of the present paper to give stronger non-critical-slope and growth conditions than could be achieved with Iwahoric overconvergent cohomology. We also vary this construction in $Q$-families.

These methods also appear well-adapted to the study of the general automorphic $\mathcal{L}$-invariants defined in \cite{Geh20}, in which parabolic subgroups arise very naturally. In addition to the examples for $\GL_2$ above, a combination of parahoric overconvergent cohomology with recent work of Gehrmann and Rosso \cite{GR20} should, in nice examples (such as the setting of conjugate-symplectic $\GL_{2n}$) yield arithmetic interpretations of automorphic $\mathcal{L}$-invariants. For $\GL_2$, such interpretations are already crucial in the construction of the Stark--Heegner points/cycles mentioned above.

Finally, we note the recent related work of Loeffler \cite{Loe20} on universal deformation spaces, which can be described as `big' parabolic eigenvarieties. The eigenvarieties we construct are the `small' automorphic eigenvarieties of \S6.2 \emph{op.\ cit}.; as yet there is no `big' automorphic analogue.

\subsection{Acknowledgements}
We are very grateful to Mladen Dimitrov, who helped us work out these definitions explicitly for $\GL_{2n}$, and to David Loeffler, who gave valuable comments and suggestions on an earlier draft. We are also indebted to the referee for their careful reading of the paper, and for their valuable comments and corrections. D.B.S.\ was supported by the FONDECYT PAI 77180007. C.W.\ was funded by an EPSRC Postdoctoral Fellowship EP/T001615/1.

\section{Preliminaries and structure theory}
\label{sec:root data}
\subsection{Global notation}
Let $F$ be a number field, and for each non-archimedean place $v$ let $F_v$ denote its completion at $v$, with ring of integers $\cO_v$ and uniformiser $\varpi_v$. Let $\cG'$ be a connected reductive group over $F$, and $\cG \defeq \mathrm{Res}_{F/\Q} \cG'$ be the Weil restriction of scalars. 
We will be fundamentally interested in the cohomology of locally symmetric spaces attached to $\cG$. Let $K \subset \cG(\A_f)$ be an open compact subgroup, where $\A_f$ denotes the finite adeles of $\Q$, let $C_\infty$ (resp.\ $Z_\infty$) be the maximal compact subgroup (resp.\ centre) of $\cG(\R)$, and let $K_\infty = C_\infty Z_\infty$. Then let
 \[
 	S_K \defeq \cG(\Q)\backslash \cG(\A)/KK_\infty^\circ
 \]
be the locally symmetric space attached to $K$, where $K_\infty^\circ$ is the identity component of $K_\infty$. If $M$ is a right $K$-module such that the centre $Z(K\cap \cG(\Q))$ acts trivially, then we get an associated local system on $S_K$ given by the fibres of the projection
\begin{equation}\label{eq:local systems}
	\cG(\Q) \ \backslash [\cG(\A) \times M]/ \ KK_\infty^\circ \to S_K,
\end{equation}
with action $\gamma(g,m)uk = (\gamma guk, m|u)$.

\subsection{Local notation and root data at $p$}\label{sec:local notation}
Let $G = \cG_{/\Qp}$. We assume that $G$ is quasi-split, and splits over a (fixed) finite Galois extension $E/\Qp$. As far as possible we will suppress $E$ from notation. We take $\cG'/F_v$ and $G$ to have (henceforth fixed) models over $\cO_v$ and $\Zp$ respectively. Let $T$ be a maximal torus in $G$, and $B$ a Borel subgroup containing $T$. Let $B^-$ denote the opposite Borel, and $N, N^-$ the unipotent radicals of $B, B^-$. Attached to all of these groups we have corresponding Lie algebras $\fg, \ft,\fb, \fb^-, \fn, \fn^-$ over $\Qp$. Let 
\[  
	X^\bullet(T) \defeq \Hom(T,\ \bG_{m}), \hspace{12pt} X_\bullet(T) \defeq \Hom(\bG_{m},\ T)
\]
be the lattices of algebraic characters and cocharacters of the torus, and $\langle\ ,\ \rangle$ the canonical pairing on $X^\bullet(T) \otimes X_\bullet(T)$. Let $R \subset X^\bullet(T)$ denote the set of roots for $(G,T)$. For each root $\alpha$, let $H_\alpha \in \ft$ and $\alpha^\vee \in X_\bullet(T)$ be the corresponding coroots, defined so that $\langle \alpha, \alpha^\vee\rangle = \alpha(H_\alpha) = 2$. We fix a basis $X_\alpha$ of 
\[
\fg_\alpha \defeq \{X \in \fg : \mathrm{ad}(t) \cdot X = \alpha(t)X \ \text{ for all }t \in T\}
\]
normalised so that $[X_\alpha,X_{-\alpha}] = H_\alpha$ in $\fg$. Our choice of Borel fixes a set of positive roots $R^+ \subset R$ and a set $\Delta \subset R^+$ of simple roots. We say a character $\lambda \in X^\bullet(T)$ is \emph{dominant} (with respect to $B$) if $\langle \lambda, \alpha\rangle \geq 0$ for all $\alpha \in \Delta$.

Let $W_G$ denote the Weyl group of $(G,T)$, generated by reflections $w_\alpha$ for $\alpha \in \Delta$, acting on $X^\bullet(T)$ by $\lambda^{w_\alpha} = \lambda - \lambda(H_\alpha)\alpha.$ Also define the $*$-action of $W_G$ on $X^\bullet(T)$ by 
\[
	w * \lambda = (\lambda + \rho)^w - \rho, \hspace{20pt} \lambda \in X^\bullet(T),\  w \in W_G,
\]
where $\rho = \tfrac{1}{2} \sum_{\alpha \in R^+} \alpha \in X^\bullet(T) \otimes_{\Z} \tfrac{1}{2}\Z$ is half the sum of the positive roots. One may check (see e.g.\ the proof of \cite[Prop.\ 3.2.11]{Urb11}) that this action is by
\begin{equation}\label{eq:weyl action on weights}
	w_{\alpha} * \lambda = \lambda - [\langle \lambda, \alpha^\vee\rangle + 1]\alpha.
\end{equation}

\begin{example}
	To anchor this general framework, we keep in mind the familiar example of $\GL_n/\Q$. Here $G$ is split, $\fg = M_n(\Q)$, $X^\bullet(T) = \Z e_1 \oplus \cdots \oplus \Z e_n$, and $X_\bullet(T) = \Z e_1^\vee \oplus \cdots \oplus \Z e_n^\vee$. For $B$ the upper-triangular Borel, $\Delta = \{\alpha_1 = e_1 - e_2, ..., \alpha_{n-1}= e_{n-1}-e_n\}$. We have $\alpha_i^\vee = e_i^\vee - e_{i+1}^\vee,$ $H_{\alpha_i}$ is the $n \times n$ matrix with $(i,i)$ entry 1, $(i+1,i+1)$ entry $-1$ and all other entries 0, and $X_{\alpha_i}$ is the $n \times n$ matrix with $(i,i+1)$ entry 1 and all others 0. The Weyl group is $S_n$; the standard action is by permutations of the $e_i$, and the $*$ action on $\lambda = (\lambda_1,...,\lambda_n) = \lambda_1 e_1 + \cdots + \lambda_n e_n$ is
	\begin{equation}
		w_{\alpha_i} * \lambda = (\lambda_1,\ ...,\  \lambda_{i-1},\ \lambda_{i+1}-1,\ \lambda_i + 1,\ \lambda_{i+2}, \ ...,\  \lambda_n).
	\end{equation}
	The dominant weights are the $\lambda$ with $\lambda_m \geq \lambda_{m+1}$ for all $m$. In particular, if $\lambda$ is dominant, then $w_{\alpha_i}*\lambda$ is \emph{never} dominant for any $i$, as $\lambda_{i+1} - 1 < \lambda_i + 1.$
\end{example}

\subsection{Parabolic subgroups}
There is a well-known correspondence between the standard parabolic subgroups $B \subset Q \subset G$ and subsets of the simple roots: if $\fq \defeq \mathrm{Lie}(Q)$, we let
\begin{equation}\label{eq:delta Q}
\Delta_Q \defeq \{\alpha \in \Delta: X_{-\alpha} \in \fq\}.
\end{equation}
The correspondence $Q \leftrightarrow \Delta_Q$ is inclusion-preserving: in particular, $\Delta_B = \varnothing$ and the maximal standard parabolics correspond to excluding a single simple root. It is convenient (if non-standard) to allow $G$ to be the `trivial' parabolic subgroup, equal to its Levi subgroup and with $\Delta_G = \Delta$.

Let $L_Q$ denote the Levi group attached to $Q$, and $N_Q$ the unipotent radical of $Q$, so that $Q = L_Q N_Q$. Note $\Delta_Q$ can be identified with $\Delta_{L_Q}$.  Also let $Q^-$ and $N_Q^-$ be the opposite groups. 

Define the \emph{parahoric subgroup at $Q$} to be $J_Q = \prod_{v|p} J_{Q,v}$, where 
\[
J_{Q,v} \defeq \{g \in \cG'(\cO_v): g \newmod{\varpi_v} \in Q(\F_p)\}.
\]	
We also define $J_Q^- \defeq J_Q \cap N^-_Q(\Zp)$. For non-trivial $Q$ we have a parahoric decomposition
\begin{equation}\label{eq:parahoric decomp}
J_Q = J_Q^- \cdot L_Q(\Zp) \cdot N_Q(\Zp),
\end{equation}
and for $g \in J_Q$, we write this as $g = n_g^- \cdot t_g \cdot n_g$. If the context is clear, we sometimes drop the subscript $g$. Note that when $Q = B$ is the Borel, $J_B$ is the usual Iwahori subgroup and we recover the Iwahori decomposition \cite[Prop.\ 5.3.3]{Mat77}.

\subsection{The Hecke algebra}
Fix a parabolic subgroup $Q$, and let $K = \prod_{v\nmid \infty}K_v \subset \cG(\A_f)$ be an open compact subgroup. We take $K$ to be \emph{parahoric} in that $K_p \defeq \prod_{v|p}K_v \subset J_Q \subset G(\Zp)$. To define the ($Q$-parahoric) Hecke algebra at $p$, we define
\begin{equation}\label{eq:TQ+}
T^+ \defeq \{t \in T(\Qp): t^{-1}\cdot N(\Zp)\cdot t \subset N(\Zp)\}.
\end{equation}

\begin{proposition}\label{prop:valuation T++} 
	\begin{itemize}\setlength{\itemsep}{0pt}
		\item[(i)] An element $t \in T(\Qp)$ is in $T^+$ if and only if $v_p(\alpha(t)) \leq 0$ for all $\alpha \in  \Delta$. 
		\item[(ii)] If $t \in T^+$, then $t^{-1} \cdot N_Q(\zp)\cdot t \subset N_Q(\Zp)$ for any parabolic $Q$. 
		\item[(iii)] If $t \in T^+$, then $t^{-1} \cdot B(\zp) \cdot t \subset B(\Zp)$.
	\end{itemize}
\end{proposition}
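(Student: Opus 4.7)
The plan is to reduce everything to the behaviour of conjugation by $t$ on individual root subgroups. Over the splitting field $E$, one has a product decomposition $N = \prod_{\alpha \in R^+} U_\alpha$ (with any fixed ordering), where each root subgroup $U_\alpha$ is isomorphic to $\bG_a$ (or a restriction of scalars thereof in the quasi-split case), and the $\Zp$-structure is the product of the $\Zp$-structures on the $U_\alpha$ coming from the fixed integral model. Conjugation by $t \in T(\Qp)$ preserves each $U_\alpha$, and with the standard normalisation $\mathrm{Ad}(t) X_\alpha = \alpha(t) X_\alpha$, the map $U_\alpha \to U_\alpha$, $u_\alpha(x) \mapsto t^{-1} u_\alpha(x) t = u_\alpha(\alpha(t)^{-1} x)$, is just multiplication by $\alpha(t)^{-1}$ on coordinates. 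Hence $t^{-1} U_\alpha(\Zp) t \subset U_\alpha(\Zp)$ if and only if $v_p(\alpha(t)) \leq 0$.

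For (i), the `$\Leftarrow$' direction is then a product over $\alpha \in R^+$: any positive root $\alpha$ writes as $\alpha = \sum_i n_i \alpha_i$ with $\alpha_i \in \Delta$ and $n_i \in \Z_{\geq 0}$, so $v_p(\alpha(t)) = \sum_i n_i \, v_p(\alpha_i(t)) \leq 0$; combined with the product decomposition of $N(\Zp)$, this yields $t^{-1} N(\Zp) t \subset N(\Zp)$. For `$\Rightarrow$', one restricts the containment to $U_\alpha(\Zp) \subset N(\Zp)$ for $\alpha \in \Delta$ and uses that $N(\Zp) \cap U_\alpha(\Qp) = U_\alpha(\Zp)$ (this uses the projection onto the $U_\alpha$-coordinate in the product decomposition), which forces $v_p(\alpha(t)) \leq 0$.

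For (ii), the key observation is that $N_Q$ is generated by root subgroups $U_\alpha$ for $\alpha \in R^+ \setminus R_{L_Q}^+$, where $R_{L_Q}$ is the root system of the Levi. Since conjugation by $t \in T^+$ preserves $U_\alpha(\Zp)$ for every $\alpha \in R^+$ by part (i), the same product decomposition argument (restricted to $R^+ \setminus R_{L_Q}^+$) yields $t^{-1} N_Q(\Zp) t \subset N_Q(\Zp)$. For (iii), one uses the Iwahori-style decomposition $B(\Zp) = T(\Zp) \cdot N(\Zp)$; conjugation by $t$ is trivial on the abelian group $T(\Zp)$, and preserves $N(\Zp)$ by the defining property of $T^+$, so the containment follows immediately.

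The only mild obstacle is the bookkeeping around root subgroups in the quasi-split (rather than split) setting, where one must pass to the splitting field $E$ to decompose $N$ and then descend to integral $\Zp$-structures; this is routine given the fixed integral models, and the computation on each root subgroup is unaffected.
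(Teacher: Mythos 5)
Your proof is correct and takes essentially the same route as the paper: both decompose $N$ into its root subgroups (the paper via the exponential coordinates $x_\beta$ for $\beta \in R^+$, you via the parametrisations $u_\alpha$), observe that conjugation by $t$ acts diagonally by $\alpha(t)^{-1}$ on the $\alpha$-coordinate, deduce (ii) by noting $N_Q(\Zp)$ is cut out by vanishing of the coordinates indexed by $R_Q^+$, and deduce (iii) from $B(\Zp) = T(\Zp)N(\Zp)$. Your write-up is slightly more explicit than the paper's in spelling out why $v_p(\alpha(t)) \le 0$ for $\alpha \in \Delta$ implies $v_p(\beta(t)) \le 0$ for all $\beta \in R^+$ (via the nonnegative integer decomposition into simple roots), and in flagging the quasi-split descent bookkeeping, but the underlying argument is identical.
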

\begin{proof}
	For (i), first suppose $v_p(\alpha(t)) \leq 0$ for all $\alpha$. The Lie algebra of $N$ is $\fn = \oplus_{\beta \in R^+} \Qp X_\beta \subset \fg$, which has a basis indexed by the positive roots $R^+$. We obtain co-ordinates $\{x_\beta(n) \in \Qp : \beta \in R^+\}$ for any $n \in N(\Qp)$, with the property that for any $\beta,\beta' \in R^+$, we have
	\[ 
	x_\beta(\exp(X_{\beta'})) = \left\{\begin{array}{ll} 1 &: \beta = \beta'\\
	0 &: \beta \neq \beta'.\end{array}\right.
	\] 
	Let $B_t$ be the matrix of conjugation by $t$ in this basis; it is diagonal with value $\beta^{-1}(t)$ at $(\beta,\beta)$. By the valuation condition, we have $v_p(\beta^{-1}(t)) \geq 0$ for all $t$. Now, the subgroup $N(\Zp)$ is exactly the subspace of $n$ such that $x_\beta(n) \in \Zp$ for all $\beta$, and this is clearly preserved by $B_t$.

	Conversely, if there exists $\alpha \in \Delta$ with $v_p(\alpha(t)) > 0$, then we see that $t^{-1} \exp(X_\alpha) t \notin N(\Zp)$.
	
	To see (ii), observe that we have $\fn_Q(\Qp) = \oplus_{\beta \in R^+\backslash R^+_Q} \Qp X_\beta \subset \fn(\Zp)$, where 
	\begin{equation}\label{eq:RQ+}
	R_Q^+ \defeq \{\beta \in R^+ : \beta \text{ is a root of }L_Q\}.
	\end{equation}
	Thus $N_Q(\Zp)$ is the subgroup of $N(\Zp)$ characterised by $x_\beta(n) = 0$ for $\beta \in R_Q^+$. But this space is preserved by the action of $t \in T^+$ by the arguments above. Finally (iii) is immediate since $B(\Zp) = T(\Zp)N(\Zp)$ and $T^{+}$ commutes with $T(\Zp)$.
\end{proof}

\begin{definition}\label{def:app hecke algebra}
	\begin{itemize}\setlength{\itemsep}{0pt}
		\item We define $\uhp_p(K_p)$ to be the commutative $\Qp$-algebra generated by 
		\[
		U_t \defeq [K_p t K_p], \ \ \ \ \  t \in T^+.
		\]
		\item  For the (all but finitely many) places $v$ of $F$ at which $K_v$ is hyperspecial maximal compact and $\cG'/F_v$ is unramified, define the local Hecke algebra $\uhp_v(K_v)$ to be the commutative $\Qp$-algebra generated by the double coset operators $T_v(\gamma) := [K_v\gamma K_v]$, for $\gamma \in \cG'(F_v)$.
		\item For all other $v\nmid p\infty$, define $\uhp_v(K_v) = 1.$ 
		\item We define the \emph{Hecke algebra} to be $\uhp(K) \defeq \uhp_p(K_p) \otimes \bigotimes_{v\nmid p\infty} \uhp_v(K_v).$
	\end{itemize}
\end{definition}

If $S$ is a $\Qp$-algebra, then a \emph{system of Hecke eigenvalues over $S$} is a non-trivial algebra homomorphism $\phi : \uhp(K) \rightarrow S$. If $M$ is an $S$-module upon which $\uhp(K)$ acts $S$-linearly, then we write $M_\phi$ for the localisation of $M$ -- as a $\uhp(K)\otimes_{\Qp}S$-module -- at the ideal $\ker(\phi) \subset \uhp(K)\otimes_{\Qp}S$. If $S$ is a field and $M$ a finite-dimensional $S$-vector space, this is the generalised eigenspace where $\uhp(K)$ acts as $\phi$. We say $\phi$ \emph{occurs in $M$} if $M_\phi \neq 0$.

\begin{remark*}
	We could take other choices of ramified Hecke algebra, altering the local geometry of the eigenvariety to suit particular arithmetic applications. The construction and results we present here go through for any reasonable choice of ramified Hecke algebra.
\end{remark*}

\subsection{Controlling operators}\label{sec:controlling}
In the general theory, the role of $U_p$ operator for modular forms is played by \emph{controlling operators}. Let $Q$ be a parabolic subgroup. For $s \geq 0$, let $B^s(\Zp) = \{b \in B(\Zp): b \equiv 1 \newmod{p^s}\}$ and define $N_Q^s = N_Q(\Zp)\cap B^s(\Zp)$. If $t \in T^+$, then by Proposition \ref{prop:valuation T++} we know conjugation by $t$ preserves $N_Q(\Zp)$. We define
\[
T^{++}_{Q} \defeq \big\{t \in T^+: t^{-1}\cdot N_Q^{s}\cdot t \subset N_Q^{s+1} \hspace{5pt}\forall s \geq 0\big\} = \left\{t \in T^+: \bigcap t^{-i}N_Q t^{i} = 1 \right\}.
\]
If $P \subset Q$ are two parabolics and $t \in T_P^{++}$, then $t^{-1}N_Q^st \subset N_Q(\Zp) \cap N_P^{s+1} = N_Q^{s+1}$, so $T_{P}^{++} \subset T_Q^{++}$.
\begin{proposition}\label{prop:Q controlling}
	Let $t \in T^+$. Then $t \in T^{++}_{Q}$ if and only if $v_p(\alpha(t)) < 0$ for all $\alpha \in \Delta\backslash\Delta_Q$.
\end{proposition}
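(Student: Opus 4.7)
The strategy is to reuse the same coordinate framework that proved Proposition \ref{prop:valuation T++}, apply it to $N_Q$ in place of $N$, and translate the nesting condition $t^{-1}N_Q^s t \subset N_Q^{s+1}$ into a valuation inequality for each positive root of $G$ that is not a root of $L_Q$. Then a simple argument using the expression of positive roots as non-negative integer combinations of simple roots lets us collapse the condition to the simple roots in $\Delta \backslash \Delta_Q$.

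More precisely, for each $\beta \in R^+$ the coordinate $x_\beta : N(\Qp) \to \Qp$ from the proof of Proposition \ref{prop:valuation T++} satisfies
\[
x_\beta(t^{-1} n t) = \beta(t)^{-1} \cdot x_\beta(n),
\]
since $\mathrm{Ad}(t^{-1}) X_\beta = \beta(t)^{-1} X_\beta$. As noted there, $N_Q(\Zp)$ is cut out inside $N(\Zp)$ by $x_\beta = 0$ for $\beta \in R_Q^+$; one further checks that $N_Q^s = N_Q(\Zp) \cap B^s(\Zp)$ corresponds to the $\beta$-coordinates with $\beta \in R^+ \backslash R_Q^+$ lying in $p^s\Zp$. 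From the displayed formula it then follows immediately that $t^{-1} N_Q^s t \subset N_Q^{s+1}$ for all $s \geq 0$ is equivalent to $v_p(\beta(t)) \leq -1$, i.e.\ $v_p(\beta(t)) < 0$, for every $\beta \in R^+ \backslash R_Q^+$ (the sharp case being $v_p(x_\beta(n)) = s$).

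It remains to reduce this uniform condition on positive roots to the condition on simple roots in the statement. Each $\beta \in R^+$ admits a unique decomposition $\beta = \sum_{\alpha \in \Delta} c_\alpha \alpha$ with $c_\alpha \in \Z_{\geq 0}$, and by construction of $R_Q^+$ (see \eqref{eq:RQ+} and \eqref{eq:delta Q}), we have $\beta \in R^+ \backslash R_Q^+$ if and only if $c_\alpha > 0$ for some $\alpha \in \Delta \backslash \Delta_Q$. Combined with $v_p(\beta(t)) = \sum c_\alpha v_p(\alpha(t))$ and the fact that $t \in T^+$ forces $v_p(\alpha(t)) \leq 0$ on all simple roots (Proposition \ref{prop:valuation T++}(i)), the ``only if'' direction is immediate by specialising to simple $\beta \in \Delta \backslash \Delta_Q$, and the ``if'' direction follows because for any such $\beta$, at least one of the non-positive summands is strictly negative.

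The only step with any substance is the claim that $N_Q^s$ is characterised by $v_p(x_\beta(n)) \geq s$ for $\beta \in R^+ \backslash R_Q^+$; this is standard for the exponential coordinates on a unipotent $p$-adic Lie group but deserves a sentence of justification using that the $\Zp$-group scheme structure of $N_Q$ is pro-$p$ with filtration by congruence subgroups matching the filtration on its Lie algebra. Everything else is formal manipulation of root data, so no serious obstacle is expected.
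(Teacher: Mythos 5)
Your proof is correct and follows essentially the same route as the paper: both arguments work in the root-space coordinates $x_\beta$ introduced for Proposition~\ref{prop:valuation T++}, note that conjugation by $t^{-1}$ scales the $\beta$-coordinate by $\beta(t)^{-1}$, characterise $N_Q^s$ as the locus $v_p(x_\beta(n)) \geq s$ for $\beta \in R^+ \setminus R_Q^+$, and then reduce to simple roots via the expansion $v_p(\beta(t)) = \sum c_\alpha\, v_p(\alpha(t))$ with $c_\alpha \geq 0$. The only cosmetic difference is that you pass through the cleaner intermediate equivalence over all $\beta \in R^+\setminus R_Q^+$ and derive the converse from it, while the paper proves the converse more tersely by exhibiting the explicit element $t^{-1}\exp(X_\alpha)t$ as a counterexample; the content is the same.
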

\begin{proof}
	Suppose $v_p(\alpha(t)) < 0$ for all $\alpha \in \Delta\backslash\Delta_Q$, and let $n \in N_Q(\zp)$. In the notation of the proof of Proposition \ref{prop:valuation T++}, the set $R^+\backslash R_Q^+$ is precisely the set of $\beta \in R^+$ whose decomposition $\beta = \sum \alpha_i$ into simple roots (in $G$) has at least one of the $\alpha_i \in \Delta\backslash\Delta_Q$. Then $v_p(\beta(t)) < 0$ for all $\beta \in R^+\backslash R_Q^+$, and every entry of $B_t$ restricted to $N_Q(\Zp)$ is divisible by $p$. Since $N_Q^s$ is the subgroup of $n$ with $x_\beta(n) \equiv 0 \newmod{p^s}$ for all $\beta \in R^+\backslash R_Q^+$, we see that $B_t$ sends $N_Q^s$ to $N_Q^{s+1}$.
	
	Conversely, if $\alpha \in \Delta\backslash\Delta_Q$ with $v_p(\alpha(t)) = 0$, then $t^{-1}\exp(X_\alpha) t \neq I \newmod{p}$. Thus $t^{-1}\cdot N_Q^0 \cdot t \not\subset N_Q^1$, so $t \notin T^{++}_{Q}$.
\end{proof}

\begin{definition}
	If $t \in T^{++}_{Q},$ we call $U_t$ a \emph{$Q$-controlling operator}.
\end{definition}

\begin{example}
	Consider the case of $G = \GL_n$. For $Q$ the parabolic with Levi $\GL_{n-r} \times \GL_{r}$, the element $t = \mathrm{diag}(1,...,1,p,...,p)$, with $p$'s in the last $r$ entries, defines a $Q$-controlling operator, but \emph{not} a $B$-controlling operator. The element $t = \mathrm{diag}(1,p,\dots,p^{n-2},p^{n-1})$ defines a $B$-controlling operator and hence a $Q$-controlling operator for any standard parabolic $Q$.
\end{example}

\section{Parahoric overconvergent cohomology}\label{sec:parabolic distributions}
We now introduce the coefficient modules for overconvergent cohomology, using a more flexible notion of `parahoric distributions' defined relative to a parabolic $Q$. When $Q = B$ is the Borel, this specialises to the usual definition of locally analytic distributions; and when $Q = G$, we recover classical coefficient modules. Cohomology with coefficients in $Q$-parahoric distributions is more easily controlled (but varies over smaller weight spaces) as $Q$ gets larger.

\subsection{Weight spaces} \label{sec:weight spaces}

Let $K \subset \cG(\bA_f)$ be an open compact subgroup such that $K_p \subset G(\Zp)$, and let $\overline{Z(K)}$ denote the $p$-adic closure of $Z_{\cG}(\Q) \cap K$ in $T(\Zp)$. 

\begin{definition}[Weights for $T$]
	Define the \emph{weight space} of level $K$ for $G$ to be the $\Qp$-rigid analytic space whose $L$-points, for $L \subset \C_p$ any sufficiently large extension of $\Qp$, are given by
		\[ 
			\cW_K(L) = \mathrm{Hom}_{\mathrm{cts}}\big(T(\Z_p)/\overline{Z(K)},L^{\times}\big).
		\]	
\end{definition}

This space has a natural group structure, and has dimension $\dim T(\Zp) - \dim \overline{Z(K)}$. It is usually more convenient to identify a weight $\lambda \in \cW_K(L)$ with the corresponding character on $T(\Zp)$ that is trivial under $\overline{Z(K)}$, and we do this freely throughout.  The condition that characters be trivial on $\overline{Z(K)}$ ensures the local systems we define later are well-defined, as discussed before \eqref{eq:local systems}. Since $K$ will typically be fixed, we will henceforth mostly drop it from the notation. 

\begin{definition}
	Each $\lambda \in X^\bullet(T)$ induces a character on $T(\Zp)$; let $X^\bullet(T)_K$ be the subspace of such $\lambda$ trivial on $\overline{Z(K)}$. There is a natural inclusion $X^\bullet(T)_{K} \subset \cW(L)$, and we call this the subspace of \emph{algebraic} weights. Via \S\ref{sec:local notation}, the algebraic weights carry the $*$-action of the Weyl group and can be paired naturally, via $\langle -,- \rangle,$ with $X_\bullet(T)$. A \emph{classical weight} is a dominant algebraic weight.
\end{definition}

When using the standard notion of distributions with respect to the Borel subgroup,  it is possible to define distributions over arbitrary affinoids in $\cW$ (see, for example, \cite[\S2.2]{Han17}). The additional flexibility we obtain with parahoric distributions, i.e.\ weaker notions of finite-slope families and non-criticality, come at the cost of less flexibility when defining distributions in families. In particular, they vary only over the following smaller weight spaces.

\begin{definition}[Weights for $Q$] Let $Q$ be a standard parabolic subgroup.
	\begin{itemize}\setlength{\itemsep}{0pt}
	\item[(i)]
		For $K$ and $L$ as above, let $\cW^Q(L)$ be the $\Qp$-rigid analytic space with $L$-points
		\[	
			\cW^Q(L) = \cW^Q_K(L) = \mathrm{Hom}_{\mathrm{cts}}\big(L_Q(\Zp)/\overline{Z(K)},L^\times\big).
		\]
	More precisely, $\cW^Q$ is the rigid generic fibre of $\mathrm{Spf}(\Zp[\![ L_Q^{\mathrm{ab}}(\Zp)/\overline{Z(K)}]\!])$, where the quotient is by the image of $\overline{Z(K)}$ in $L_Q^{\mathrm{ab}}(\Zp)$. Precomposition with $T(\Zp) \hookrightarrow L_Q(\Zp) \to L_Q^{\mathrm{ab}}(\Zp)$ realises $\cW^Q$ as a closed rigid subgroup of $\cW$. 
	\item[(ii)]
		For $\lambda_0 \in \W(\Qp)$ a fixed classical weight, define $\cW^Q_{\lambda_0}$to be the coset $\lambda_0\cW^Q$ inside $\cW$, which hence obtains the structure of a $\Qp$-rigid space. We have
		\[
			\cW_{\lambda_0}^Q(L) \defeq \{\lambda \in \cW(L) : \lambda\lambda_0^{-1} \in \cW^Q(L)\}.
		\]
	\end{itemize} 
\end{definition}

Again, we identify these weights with characters on $L_Q(\Zp)$ that are trivial under $\overline{Z(K)}$. This space has dimension $\mathrm{dim}(L_Q^{\mathrm{ab}}(\Zp)) - \mathrm{dim}(\overline{Z(K))}$, which is at most $\mathrm{dim}(\cW_K)$. Whilst we encode $\lambda_0$ in the notation, the space $\cW_{\lambda_0}^Q$ evidently only depends on $\lambda_0$ up to translation by $\cW^Q$.
\begin{example}
	Let $G = \GL_{2n}$, and $Q$ the standard parabolic with Levi $L_Q = \GL_n \times \GL_n$ embedded diagonally. Then $\cW(L)$ comprises $2n$-tuples $\lambda = (\lambda_1, ..., \lambda_{2n})$ of characters $\Zp^\times \to L^\times$ (that are trivial on $\overline{Z(K)}$), and $\cW^Q(L)$ is the subspace where $\lambda_1 = \cdots = \lambda_n$ and $\lambda_{n+1} = \cdots = \lambda_{2n}$. 
\end{example}

\subsection{Parahoric distributions}
Locally analytic induction modules for a group $G$, as for example seen in \cite{AS08, Jon11, Urb11}, are usually defined through $p$-adic analytic functions on the Iwahori subgroup, and are uniquely defined by their restriction to $N(\Zp)$. For $G = \GL_n$, for example, this translates into functions that are locally analytic in $n(n-1)/2$ variables, corresponding to the off-diagonal entries in $N(\Zp)$.

We now define `partially overconvergent' distribution modules, defined with respect to the parabolic $Q$, where we only allow analytic variation in some subset of the variables in $N(\Zp)$ and dictate algebraic variation in the others. For this, we first algebraically induce up to the Levi $L_Q$, and then analytically induce to the parahoric $J_Q$. This is explained in explicit detail for $\GL_3/\Q$ in \cite[\S4.3]{Wil18}; the concrete setting \emph{op.\ cit}.\ simplifies the concepts whilst retaining the key ideas.

We recap standard results on locally analytic induction. As $G$ splits over $E$, all our coefficient modules come from representations of $\fg\subE$. For the rest of the paper, fix $L/\Qp$ finite containing $E$, and an $L$-Banach algebra $R$.

\subsubsection{Algebraic induction and highest weight representations}\label{sec:algebraic induction}
Let $\lambda \in X^\bullet(T) \subset \cW(L)$ be a classical weight for the group $G$. We have a finite-dimensional irreducible representation $V_\lambda^G$ of highest weight $\lambda$, whose $L$-points can be realised as the algebraic induction
\begin{align*}
V_{\lambda}^G(L) &\defeq \mathrm{Ind}_{B^-(\Z_p)}^{G(\Z_p)} \lambda\\ 
&\defeq \{f : G(\Z_p) \to L \ | f\text{ algebraic},\  f(n^-tg) = \lambda(t)f(g)\forall n^- \in N^-(\Zp), t \in T(\Zp), g \in G(\Zp)\}.
\end{align*}
(By an \emph{algebraic function $f : G(\Zp) \to L$}, we mean a function on $G(\Zp)$ induced by a global section $f \in L[G]$ of the structure sheaf of the (affine) group $G_{/L}$.) The space $V_{\lambda}^G(L)$ is a left $G(\Z_p)$-module by right translation, and we denote this action by $\langle \cdot \rangle_\lambda$.
Any $f \in V_\lambda^G(L)$ is determined by its restriction to the (open, dense) Iwahori subgroup $J_B$, and thus (by the transformation property and \eqref{eq:parahoric decomp}) by its restriction to $N(\Zp)$. Moreover, it is standard\footnote{See e.g.\ \cite[\S3.2.9]{Urb11}, where for $\varepsilon = 1$ this is implicit in the statement $\mathbb{V}_\lambda(\varepsilon,L) = V_\lambda(L) \cap \cA_{m}(I,L)$.}
that any algebraic $f : J_B \to L$ with $f(n^-tg) = \lambda(t)f(g)$ has a unique algebraic extension to $G(\Z_p)$.

\subsubsection{Analytic function spaces}\label{sec:analytic functions}
 Let $X \subset \Zp^r$ be open compact and $L$ and $R$ be as above. A function $f : X \to R$ is \emph{analytic} if it can be written as a convergent power series
\[
	f(x_1,...,x_r) = \sum_{n_1,...,n_r} a_{n_1,...,n_r} (x_1 - a_1)^{n_1} \cdots (x_r - a_r)^{n_r},\ \ \ \ a_{\mathbf{n}} \in R,
\]
for some $(a_1,...,a_r) \in X$. We write the space of such functions as $\cA_0(X,R)$; note that as the $a_{\mathbf{n}}$ converge to zero, $\cA_0(X,R) \cong \cA_0(X,L)\widehat{\otimes}_LR$ is the completed tensor product. We say $f$ is \emph{algebraic} if $a_{\mathbf{n}} = 0$ for all but finitely many $\mathbf{n}$, and denote the subspace of such $f$ as $V(X,R) \subset \cA_0(X,R)$. For any integer $s$, we say $f : X \to R$ is \emph{$s$-analytic} (resp.\ $s$-algebraic) if it is analytic (resp.\ algebraic) on each open disc of radius $p^{-s}$ in $X$ (inside $\Zp^r$), and write $\cA_s(X,R)$ for the space of $s$-analytic functions. Note $0$-analytic is the same as analytic, so the notation is consistent. The spaces $\cA_s(X,L)$ are Banach spaces under a suitable sup norm \cite[\S3.2.1]{Urb11}, and the inclusions $\cA_s(X,L) \subset \cA_{s+1}(X,L)$ are compact \cite[Lem.\ 3.2.2]{Urb11}. The spaces $\cA_{s}(X,R) \cong \cA_{s}(X,L)\widehat{\otimes}_LR$ inherit a Banach $R$-module structure from the completed tensor product, which can again be described in terms of sup norms (see e.g.\ \cite[\S2.2]{Han17}, \cite[Prop.\ 3.6.7]{AS08}). The inclusions $\cA_{s}(X,R) \subset \cA_{s+1}(X,R)$ are compact by \cite[Cor.\ 2.9]{Buz07}, noting the potential ONability hypothesis follows from ONability of $\cA_s(X,L)$ (see below) and Lem.\ 2.8 \emph{op.\ cit}. 
 We write $\cA(X,R) = \varinjlim_s \cA_s(X,R)$.

If $M$ is a finite Banach $R$-module, then we say a function $f : X \to M$ is $s$-analytic if it is an element of $\cA_s(X,R)\widehat{\otimes}_R M$. We write $\cA_s(X,M)$ for the space of such functions, which (by \cite[\S4]{Ser62}) inherits $R$-Banach module structure from the completed tensor product. Again, the inclusion maps $\cA_{s}(X,M) \subset \cA_{s+1}(X,M)$ are compact, and we let $\cA(X,M) = \varinjlim_s \cA_s(X,M)$. 

Recall the definition of \emph{orthonormalisable (ONable)} from \cite[\S A1]{Col97}. Any Banach space over $L$ is ONable \cite[Lem.\ 2.1.5]{Urb11}, so the spaces $\cA_s(X,L)$ and (when $M$ is a finite-dimensional $L$-vector space) $\cA_s(X,M)$ are ONable. When $R$ is a contractive $L$-algebra (for example, if $R$ is an $L$-affinoid algebra), then the completed tensor product of an ONable $L$-Banach space with $R$ is an ONable $R$-Banach module \cite[Prop.\ A1.3]{Col97}, so for such $R$ the spaces $\cA_s(X,R)$ are ONable. If $A$ and $B$ are two ONable Banach $R$-modules with ON bases $\{e_i\},\{f_j\}$, then $A\widehat{\otimes}_RB$ is an ONable Banach $R$-module with ON basis $\{e_i\otimes f_j\}$; hence when $R$ is contractive and $M$ is a finite Banach $R$-module, the spaces $\cA_s(X,M)$ are ONable.

For a Banach $R$-module $A$, let $\mathrm{Hom}_{R}(A,R)$ denote the space of continuous $R$-module maps $A \to R$. This is a Banach space via \cite[\S A1]{Col97}. If $R = L$, and $M$ is a finite-dimensional (normed) $L$-vector space, then we write $\cD_s(X,M) \defeq \mathrm{Hom}_L(\cA_s(X,M), L)$. In this case the maps $\cD_{s+1}(X,M) \subset \cD_{s}(X,M)$ are compact via the analogous statements for $\cA_s(X,M)$ and \cite[Lem.\ 16.4]{Sch02}, and (as it is an $L$-Banach space) $\cD_{s}(X,M)$ is ONable. Similar statements for dual spaces over $R$ are more subtle: see \S\ref{sec:distributions in families} below. 

\subsubsection{Analytic induction modules}
Let $Q = L_QN_Q$ be a parabolic. We may identify $J_Q$ with an open compact subset of $\Zp^r$ for some $r$, and thus apply the above formalism of analytic functions on $J_Q$. Let $M$ be a finite Banach $R$-module with a left-action of $L_Q(\Zp)$. We extend this action to $Q^-(\Zp) \cap J_Q = (J_Q^-L_Q)(\Zp)$ by dictating that $J_Q^-$ acts trivially. 
\begin{definition}
	Define the \emph{$s$-analytic induction} of $M$ to $J_Q$, denoted $\mathrm{LA}_s\mathrm{Ind}_Q M$, to be the space of functions $f : J_Q \to M$ such that $f \in \cA_s(J_Q,M)$ and
		\begin{equation}\label{eq:parahoric transform}
			f(bg) = b \cdot f(g)\text{ \ \ \  for all }b \in Q^-(\Zp)\cap J_Q\text{ and }g \in J_Q.
		\end{equation}
	We write $\mathrm{LAInd}_Q M$ for the space of such functions $f$ such that $f \in \cA(J_Q,M)$.
\end{definition}
 
Note that any such function $f$ is uniquely determined by its restriction to $N_Q(\zp)$ by \eqref{eq:parahoric transform} and the parahoric decomposition \eqref{eq:parahoric decomp}. Recall from \eqref{eq:RQ+} we have an explicit realisation of $N_Q(\Zp)$ as an open compact subset of $\Zp^t$ via the product decomposition $N_Q(\zp) \cong \prod_{\beta \in R^+ \backslash R_Q^+} \Zp X_\beta$. Note then that a function on $N_Q(\zp)$ is $s$-analytic if and only if it is analytic on each $N^s_Q(\Zp)$-coset.

\subsubsection{Locally analytic induction at single weights}
We recap the usual locally analytic modules. Here we take $Q$ to be the Borel $B$, with Levi $T$. Let $\lambda \in \cW(L)$ be a classical weight.

\begin{definition}
\begin{itemize}\setlength{\itemsep}{0pt}
		\item Denote the $s$-analytic induction of $\lambda$ by
		\[
			\cA_{\lambda,s}^G(L) \defeq \mathrm{LA}_s\mathrm{Ind}_B \lambda,
		\]
		realised as functions $f: J_B \to L$ with $f(n^-tg) = \lambda(t)f(g)$ for $t \in T(\Zp)$, $n^- \in N^-(\Zp)$.

	\item Let $\cA_{\lambda}^G(L) \defeq \mathrm{LAInd}_B \lambda = \varinjlim_s \cA_{\lambda,s}^G(L)$ be the module of locally analytic functions. 
	
\item We write $\cD_{\lambda,s}^G(L)$ and $\cD_{\lambda}^G(L)$ for the respective topological $L$-duals of the above spaces.
\end{itemize}
\end{definition}

The module $\cA_{\lambda,s}^G(L)$ can be identified with $\cA_s(N(\Zp),L)$ by restriction from $J_B$ to $N(\Zp)$, and inherits an $L$-Banach space structure from this space. Similarly the natural inclusions $\cA_{\lambda,s}^G(L) \subset \cA_{\lambda,s+1}^G(L)$ are all compact. Note also that via the restriction to $J_B$ explained in \S\ref{sec:algebraic induction}, we may view $V_\lambda^G(L)$ as the subspace of algebraic functions in $\cA_{\lambda,0}^G(L)$.

Now we work with a general $Q$, with Levi $L_Q$. Let $\lambda$ be a classical weight; it is also a weight for $L_Q$, and we have an algebraic $L_Q$-representation $V_{\lambda}^{L_Q}(L)$ of highest weight $\lambda$ via \S\ref{sec:algebraic induction}. 

\begin{definition}\label{def:parahoric distributions}
	\begin{itemize}\setlength{\itemsep}{0pt}
		\item Let $\A_{\lambda,s}^Q(L) \defeq \mathrm{LA}_s\mathrm{Ind}_{Q}[ V_{\lambda}^{L_Q}(L)]$.

\item Let $\A_{\lambda}^Q(L) \defeq \mathrm{LAInd}_Q [V_{\lambda}^{L_Q}(L)] = \varinjlim_s \A_{\lambda,s}^Q(L)$. 

\item 	Let $\D_{\lambda,s}^Q(L) \defeq \mathrm{Hom}_L(\A_{\lambda,s}^Q(L),L)$ and $\D_{\lambda}^Q(L) = \mathrm{Hom}_L(\A_{\lambda}^Q(L),L)$.
\end{itemize}
\end{definition}

As above, all the spaces with subscript $s$'s are Banach spaces over $L$. The spaces $\A_{\lambda}^Q(L)$ and $\D_{\lambda}^Q(L)$ are Fr\'{e}chet spaces, and $\D_{\lambda}^Q(L)$ is compact Fr\'echet in the sense of \cite[\S2.3.12]{Urb11}.
\begin{remark}
	As above, for any $Q$, the subspace of algebraic functions in $\A_{\lambda,0}^Q(L)$ is $V_{\lambda}^G(L)$, and hence $V_{\lambda}^{G,\vee}(L)$ is a quotient of $\D_{\lambda,0}^Q(L)$. At the extreme end, where we take  $Q=G$, then from the definition $\A_{\lambda,0}^G(L) = V_{\lambda}^G(L)$ and $\D_{\lambda,0}^G(L) = V_{\lambda}^{G,\vee}(L)$.
\end{remark}

\subsubsection{Integral structures}\label{sec:integral}
All of the above Banach spaces have natural integral structures, where we replace $L$ with $\cO_L$; in particular, as in \cite[3.2.13]{Urb11} we define 
\[
	\cA_{\lambda,s}^G(\cO_L) \defeq \cA_{\lambda,s}^G(L)\  \cap \ \cA_s(J_B, \cO_L),\hspace{20pt}
 	\A_{\lambda,s}^Q(\cO_L) \defeq \A_{\lambda,s}^Q(L) \ \cap \ \mathcal{A}_s\big(J_Q,V_{\lambda}^{L_Q}(\cO_L)\big).
\]
 The dual modules $V_{\lambda}^{G,\vee}(\cO_L)$, $\cD_{\lambda,s}^G(\cO_L)$ and $\D_{\lambda,s}^{Q}(\cO_L)$ are then all defined via $\cO_L$-duals.

\subsubsection{Analytic functions in families}

We now vary these spaces in families. Fix a classical weight $\lambda_0 \in \cW(L)$, and let $\cU \subset \cW_{\lambda_0}^Q$ be an affinoid (which we \emph{always} take to be admissible in the sense of \cite[Def.\ 2.2.6]{Con08}, so that it is open in the Tate topology on $\cW_{\lambda_0}^Q$). If $\lambda \in \cU(L)$, then by definition $\lambda\lambda_0^{-1} \in \cW^{Q}(L)$ is a character of $L_Q(\Zp)$.

\begin{lemma}\label{lem:changing weight}If $\lambda \in \cU(L)$ is classical, then we have an isomorphism of $L_Q(\Zp)$-modules 
\[
	V_{\lambda}^{L_Q}(L) \cong V_{\lambda_0}^{L_Q}(L) \otimes_L \lambda\lambda_0^{-1}.
\]
\end{lemma}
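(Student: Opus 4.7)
The plan is to write down an explicit isomorphism given by multiplication by the character $\chi \defeq \lambda\lambda_0^{-1}$. Everything hinges on first recognising that $\chi$ is not merely a continuous character of $L_Q(\Zp)$, but actually an algebraic one.

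First I would observe that since $\lambda \in \cU(L) \subset \cW_{\lambda_0}^Q(L)$, by definition $\chi$ is a continuous character of $L_Q(\Zp)/\overline{Z(K)}$. On the other hand, both $\lambda$ and $\lambda_0$ are classical, so their restrictions to $T(\Zp)$ are algebraic; hence $\chi|_{T(\Zp)}$ is an algebraic character of $T$. The fact that it extends continuously to $L_Q(\Zp)$ forces it to be trivial on $T(\Zp) \cap [L_Q,L_Q](\Zp)$, i.e.\ trivial on the coroots $\alpha^\vee$ for $\alpha \in \Delta_Q$. Consequently $\chi$ is the restriction of an algebraic character of $L_Q$ that factors through $L_Q^{\mathrm{ab}}$, and defines a 1-dimensional algebraic $L_Q$-representation; I will freely view it also as an algebraic function $\chi : L_Q \to \bG_m$.

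Next I would define the map
\[
    \Phi : V_{\lambda_0}^{L_Q}(L) \otimes_L \chi \longrightarrow V_\lambda^{L_Q}(L), \qquad f \otimes 1 \longmapsto f\cdot\chi,
\]
where the right-hand side denotes the pointwise product of algebraic functions on $L_Q(\Zp)$. To verify this is well-defined, for $n^- \in N^-_{L_Q}(\Zp)$, $t \in T(\Zp)$, $g \in L_Q(\Zp)$ I compute
\[
    (f\chi)(n^-tg) = \lambda_0(t)\chi(t)\chi(g)f(g) = \lambda(t)(f\chi)(g),
\]
using that $\chi$ is trivial on $N^-_{L_Q}(\Zp)$ (as it factors through $L_Q^{\mathrm{ab}}$). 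Algebraicity of $f\chi$ is immediate since both factors are algebraic on $L_Q$. For $L_Q(\Zp)$-equivariance, the action of $g$ on $f \otimes 1$ is $\chi(g)(\langle g\rangle_{\lambda_0}f) \otimes 1$, which $\Phi$ sends to the function $h \mapsto \chi(g)\chi(h)f(hg)$; on the other hand, $\langle g\rangle_\lambda(f\chi)$ evaluated at $h$ equals $f(hg)\chi(h)\chi(g)$. These agree.

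Finally, the map $f \mapsto (f\chi^{-1}) \otimes 1$ is a two-sided inverse to $\Phi$ by the symmetric computation, so $\Phi$ is an $L_Q(\Zp)$-equivariant isomorphism. The only non-formal step is the first paragraph — identifying $\chi$ as an algebraic character of $L_Q$ — and this is really just unwinding the definition of $\cW^Q$ together with the algebraicity of $\lambda$ and $\lambda_0$; everything after that reduces to a direct check.
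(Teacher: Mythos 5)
Your proof is correct, but it takes a genuinely different route from the paper's. The paper's proof is a one-liner using representation theory: $\chi = \lambda\lambda_0^{-1}$ is an algebraic character of $L_Q$, hence $V_{\lambda_0}^{L_Q}\otimes\chi$ is irreducible (tensoring an irreducible with a character stays irreducible), and since it has highest weight $\lambda_0 + \chi = \lambda$ it must equal $V_\lambda^{L_Q}$. You instead write down the explicit isomorphism $f \mapsto f\chi$ and verify everything by hand. Both rely on the same key input — that $\chi$, a priori only a continuous character of $L_Q(\Zp)$ lying in $\cW^Q$, is in fact the restriction of an \emph{algebraic} character of $L_Q$ — but where the paper leaves this implicit (it is built into the construction of $\cW^Q$ as the generic fibre of $\mathrm{Spf}(\Zp[\![L_Q^{\mathrm{ab}}(\Zp)/\overline{Z(K)}]\!])$, which forces $\chi$ to factor through $L_Q^{\mathrm{ab}}$), you spell it out. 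Your approach is more self-contained in that it avoids invoking highest-weight theory, at the cost of a longer verification. One small imprecision in your first paragraph: a continuous character of $L_Q(\Zp)$ is a priori only trivial on the topological commutator subgroup of the $\Zp$-points, which is contained in but need not equal $[L_Q,L_Q](\Zp)$; what you actually use is that it vanishes on $\alpha^\vee(1+p^m\Zp)$ for each $\alpha\in\Delta_Q$ and some $m$, which together with algebraicity of $\chi|_{T(\Zp)}$ forces $\langle\chi,\alpha^\vee\rangle=0$. This fixes up the step without changing the conclusion.
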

\begin{proof}
	The character $\lambda\lambda_0^{-1}$ can itself, as an irreducible representation of $L_Q$, be viewed as the highest weight representation $V_{\lambda\lambda_0^{-1}}^{L_Q}$. Then $V_\lambda^{L_Q} = V_{\lambda_0\lambda\lambda_0^{-1}}^{L_Q}$ is a subrepresentation of the tensor product; but the tensor product of an irreducible representation with a character is irreducible.
\end{proof} 

Crucial for variation is the fact that the underlying spaces of $V^{L_Q}_{\lambda}(L)$ and $V^{L_Q}_{\lambda_0}(L)$ are the same: only the $L_Q(\Zp)$-action is different. We now vary the action analytically.

As $\cW$ is a rigid analytic group, translation by $\lambda_0$ defines a rigid analytic automorphism of $\cW$. Let $\cU_0 \defeq \lambda_0^{-1}\cU \subset \cW^Q$; this translation identifies $\cU_0$ isomorphically with $\cU$, so it is an affinoid defined over $L$. 
Attached to such an affinoid, there exists a tautological/universal character $\chi_{\cU_0} : L_Q(\Zp) \longrightarrow \cO(\cU_0)^\times$ with the property that for each weight $\lambda\lambda_0^{-1} \in \cU_0(L)$, composing $\chi_{\cU_0}$ with evaluation $\cO(\cU_0) \to L$ at $\lambda\lambda_0^{-1}$ recovers the corresponding map $L_Q(\Zp) \rightarrow L^\times$. Necessarily such a character must factor through the abelianisation $L_Q^{\mathrm{ab}}(\Zp)$, and $L_Q^{\mathrm{ab}}$ (as a commutative reductive group) is a torus. Any character of $L_Q^{\mathrm{ab}}(\Zp)$ is then locally analytic by \cite[Prop.\ 8.3]{Buz07}. We deduce $\chi_{\cU_0}$ is the composition of a locally analytic map with the analytic (even algebraic) map $L_Q \to L_Q^{\mathrm{ab}}$, hence it is $s$-analytic for all $s$ greater than some (minimal) integer $s[\cU]$.

\begin{definition}
	Define a finite free $\cO(\cU_0)$-module $V_{\cU}^{L_Q} \defeq V^{L_Q}_{\lambda_0}(L) \otimes_L \cO(\cU_0)$, and a map
\begin{align*}
	\langle \cdot \rangle_{\cU} : L_Q(\Zp) &\longrightarrow \mathrm{Aut}\left(V_{\lambda_0}^{L_Q}(L)\right) \otimes_L \cO(\cU_0)^\times \subset \mathrm{Aut}\left(V^{L_Q}_{\cU}\right)\\
		\ell & \longmapsto \langle \ell \rangle_{\lambda_0} \otimes \chi_{\cU_0}(\ell).
\end{align*}
\end{definition}

This makes $V^{L_Q}_{\cU}$ into an $L_Q(\Zp)$-representation. From the definition of $\chi_{\cU_0}$, we deduce: 
\begin{proposition}
	For any classical $\lambda \in \cU(L)$, evaluation $\cO(\cU_0) \rightarrow L$ at $\lambda\lambda_0^{-1} \in \cU_0$ induces a surjective map
\[
	\mathrm{sp}_{\lambda} : V^{L_Q}_{\cU} \longrightarrow V^{L_Q}_{\lambda_0}(L) \otimes_L \lambda\lambda_0^{-1} \cong V^{L_Q}_{\lambda}(L)
\]
of $L_Q(\Zp)$-representations. Thus $V^{L_Q}_{\cU}$ interpolates the representations $V^{L_Q}_{\lambda}(L)$ as $\lambda$ varies in $\cU$. 
\end{proposition}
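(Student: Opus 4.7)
The statement is essentially a formality once one unpacks the definitions carefully, so the proof will be a short tracking of how the $L_Q(\Z_p)$-action interacts with specialisation at a point. My plan is to construct the map $\mathrm{sp}_\lambda$ by base change, then verify the three properties (well-defined/$L_Q(\Z_p)$-equivariant, surjective, identifiable with the stated target) in turn.

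First, since $V^{L_Q}_{\cU} = V^{L_Q}_{\lambda_0}(L) \otimes_L \cO(\cU_0)$ is defined by extension of scalars, the continuous $L$-algebra homomorphism $\mathrm{ev}_{\lambda\lambda_0^{-1}} : \cO(\cU_0) \to L$ induces an $L$-linear map
\[
  \mathrm{id} \otimes \mathrm{ev}_{\lambda\lambda_0^{-1}} : V^{L_Q}_{\lambda_0}(L) \otimes_L \cO(\cU_0) \longrightarrow V^{L_Q}_{\lambda_0}(L) \otimes_L L = V^{L_Q}_{\lambda_0}(L),
\]
and I will take this to be the underlying map of $\mathrm{sp}_\lambda$. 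Surjectivity is immediate: evaluation at a point in the affinoid is a surjective map of $L$-algebras (the augmentation quotient of $\cO(\cU_0)$ by its maximal ideal at $\lambda\lambda_0^{-1}$), and tensoring with the finite free module $V^{L_Q}_{\lambda_0}(L)$ preserves surjectivity.

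The key verification is equivariance. By definition the action of $\ell \in L_Q(\Z_p)$ on $V^{L_Q}_{\cU}$ is $\langle \ell \rangle_{\cU} = \langle \ell \rangle_{\lambda_0} \otimes \chi_{\cU_0}(\ell)$. Applying $\mathrm{id} \otimes \mathrm{ev}_{\lambda\lambda_0^{-1}}$ and using the defining property of the universal character (that evaluation of $\chi_{\cU_0}$ at $\lambda\lambda_0^{-1} \in \cU_0(L)$ recovers the character $\lambda\lambda_0^{-1} : L_Q(\Z_p) \to L^\times$), I see that after specialisation the action of $\ell$ becomes multiplication by the scalar $(\lambda\lambda_0^{-1})(\ell)$ composed with $\langle \ell \rangle_{\lambda_0}$. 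This is precisely the $L_Q(\Z_p)$-action on the twisted module $V^{L_Q}_{\lambda_0}(L) \otimes_L \lambda\lambda_0^{-1}$; hence $\mathrm{sp}_\lambda$ lands in, and is a morphism to, this module. Finally, Lemma \ref{lem:changing weight} furnishes the isomorphism $V^{L_Q}_{\lambda_0}(L) \otimes_L \lambda\lambda_0^{-1} \cong V^{L_Q}_{\lambda}(L)$ of $L_Q(\Z_p)$-representations, completing the identification of the target.

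There is no substantial obstacle: the only point that requires any care is confirming that the universal character $\chi_{\cU_0}$ really does specialise to $\lambda\lambda_0^{-1}$ at the point $\lambda\lambda_0^{-1}\in \cU_0(L)$, which is exactly the tautological property recorded in the discussion of $\chi_{\cU_0}$ preceding the definition of $\langle \cdot \rangle_{\cU}$. Everything else is base change along a point of an affinoid.
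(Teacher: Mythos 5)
Your proof is correct and follows essentially the same route as the paper: surjectivity via surjectivity of $\mathrm{ev}_{\lambda\lambda_0^{-1}}$ combined with the fact that $V_{\lambda_0}^{L_Q}(L)$ is finite free (hence flat) over $L$, and equivariance via the tautological specialisation property of $\chi_{\cU_0}$ together with Lemma \ref{lem:changing weight}. The paper records this only as a one-line remark after the statement, so your write-up simply makes explicit the same verification.
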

Here $\mathrm{sp}_\lambda$ is surjective since $\cO(\cU_0) \twoheadrightarrow L$ (evaluation at $\lambda\lambda_0^{-1}$) is surjective and $V_{\lambda_0}^{L_Q}(L)$ is $L$-flat.

\begin{remark}
	The choice of $\lambda_0$ fixes an identification of $\cU$ and $\cU_0$, and hence of $\cO(\cU)$ and $\cO(\cU_0)$, which is compatible with our normalisation of specialisation maps. Henceforth we work only with $\cU$, and implicitly the transfer of structure is with respect to this choice of identification.
\end{remark}

\begin{definition}
	For any $s \geq s[\cU]$, define\footnote{Note this is only well-defined for $s \geq s[\cU]$ since otherwise the action of $L_Q(\Zp)$ is not $s$-analytic.} 
	\[
		\A_{\cU,s}^Q \defeq \mathrm{LA}_s\mathrm{Ind}_{Q} V^{L_Q}_{\cU}, \hspace{10pt}\text{and}\hspace{10pt} \A_{\cU}^Q = \varinjlim_s \A_{\cU,s}^Q.
	\]
\end{definition}

\begin{lemma}\label{lem:AU ON}
	 $\A_{\cU,s}^Q \cong \cA_s(N_Q(\Zp),V_{\lambda_0}^{L_Q}(L)) \widehat{\otimes}_L\cO(\cU)$ is an ONable $\cO(\cU)$-Banach module. If $\{e_i\}_{i\in\N}$ is an ON basis of $\A_{s}(N_Q(\Zp),V_{\lambda_0}^{L_Q}(L))$, then $\{e_i \otimes 1\}_{i\in\N}$ is an ON basis of $\A_{\cU,s}^Q$.
\end{lemma}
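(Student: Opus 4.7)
The plan is to unwind the definition of $\A_{\cU,s}^Q$, identify it as $s$-analytic functions on $N_Q(\Zp)$ valued in $V^{L_Q}_{\cU}$, and then reorganise the completed tensor product to pull $\cO(\cU)$ out to the right, at which point ONability over $\cO(\cU)$ follows from the general principle recalled in \S\ref{sec:analytic functions}.

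First, I would note that by definition $\A_{\cU,s}^Q = \mathrm{LA}_s\mathrm{Ind}_Q V^{L_Q}_{\cU}$ sits inside $\cA_s(J_Q, V^{L_Q}_\cU)$, cut out by the transformation property \eqref{eq:parahoric transform}. Using the parahoric decomposition \eqref{eq:parahoric decomp} together with the discussion immediately after Definition \ref{def:parahoric distributions} (that such functions are determined by their restriction to $N_Q(\Zp)$), restriction gives an isomorphism
\[
	\A_{\cU,s}^Q \ \stackrel{\sim}{\longrightarrow}\  \cA_s\!\left(N_Q(\Zp),\ V^{L_Q}_{\cU}\right)
\]
of $\cO(\cU)$-modules. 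Crucially, this identification is only of underlying Banach modules, so the twisted $L_Q(\Zp)$-action encoded by $\chi_{\cU_0}$ is irrelevant at this stage.

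Second, I would unpack $V^{L_Q}_\cU = V^{L_Q}_{\lambda_0}(L)\otimes_L \cO(\cU)$. Since $V^{L_Q}_{\lambda_0}(L)$ is a \emph{finite}-dimensional $L$-vector space, the finite Banach $\cO(\cU)$-module structure is the ordinary tensor product structure. Applying the definition of analytic functions with values in a finite Banach module from \S\ref{sec:analytic functions},
\[
	\cA_s\!\left(N_Q(\Zp),\ V^{L_Q}_{\cU}\right)\ =\ \cA_s\!\left(N_Q(\Zp),\cO(\cU)\right)\,\widehat{\otimes}_{\cO(\cU)}\, V^{L_Q}_{\cU}.
\]
Then $\cA_s(N_Q(\Zp),\cO(\cU))\cong \cA_s(N_Q(\Zp),L)\widehat{\otimes}_L\cO(\cU)$ (as recalled in \S\ref{sec:analytic functions}), and a standard rearrangement of (completed) tensor products, using that $V^{L_Q}_{\lambda_0}(L)$ is finite over $L$, gives
\[
	\cA_s\!\left(N_Q(\Zp),\ V^{L_Q}_{\cU}\right)\ \cong\ \cA_s\!\left(N_Q(\Zp),\ V^{L_Q}_{\lambda_0}(L)\right)\widehat{\otimes}_L \cO(\cU),
\]
which is the desired isomorphism.

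Finally, ONability is immediate from the general facts recalled in \S\ref{sec:analytic functions}: the space $\cA_s(N_Q(\Zp), V^{L_Q}_{\lambda_0}(L))$ is an $L$-Banach space with finite-dimensional coefficients, hence ONable over $L$; and $\cO(\cU)$, being an $L$-affinoid algebra, is contractive, so the completed tensor product of an ONable $L$-Banach space with $\cO(\cU)$ is an ONable $\cO(\cU)$-Banach module by \cite[Prop.\ A1.3]{Col97}. The statement about the ON basis $\{e_i\otimes 1\}$ is the standard description of an ON basis of a completed tensor product of ONable modules (also recalled in \S\ref{sec:analytic functions}, applied with $B=\cO(\cU)$ having the obvious ON basis $\{1\}$ over itself).

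The only real subtlety here is being careful that the twist by $\chi_{\cU_0}$ defining $V^{L_Q}_\cU$ does not disturb the ambient Banach module structure (it affects only the $L_Q(\Zp)$-action, which plays no role in the claim), and handling the rearrangement of completed tensor products, which is routine because $V^{L_Q}_{\lambda_0}(L)$ is finite-dimensional.
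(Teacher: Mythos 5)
Your proposal is correct and follows essentially the same route as the paper: restrict to $N_Q(\Zp)$ via the parahoric decomposition, unwind $V^{L_Q}_{\cU}$ using finite-dimensionality of $V^{L_Q}_{\lambda_0}(L)$ to rearrange the completed tensor products, and conclude ONability from \cite[Prop.\ A1.3]{Col97}. The paper's proof is terser but identical in substance; your added remark that the $\chi_{\cU_0}$-twist only affects the group action and not the underlying Banach structure is a sensible clarification, not a deviation.
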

\begin{proof}
	By \eqref{eq:parahoric transform}, restriction to $N_Q(\Zp)$ gives $\A_{\cU,s}^Q \cong \cA_s(N_Q(\Zp),V_{\cU}^{L_Q})$, which from the definitions is isomorphic to $\cA_s(N_Q(\Zp), V_{\lambda_0}^{L_Q}(L)) \widehat{\otimes}_L \cO(\cU)$. The rest now follows from \cite[Prop.\ A1.3]{Col97}.
\end{proof}

\subsubsection{Distributions in families}\label{sec:distributions in families}
 
 Since $\cO(\cU)$-duals are not as well-behaved as $L$-duals, we have to work harder to study the distributions in this setting. See e.g.\ \cite[Rem.\ 3.1]{Bel12} or \cite[\S2.2]{Han17} for analogous discussions.  The first natural space to study is the Banach/continuous dual
 \[ 
 \D_{\cU,s}^Q \defeq \mathrm{Hom}_{\cO(\cU)}\big(\A_{\cU,s}^Q, \cO(\cU)\big).
 \]
The natural restriction maps $\D_{\cU,s+1}^Q \to \D_{\cU,s}^Q$ are injective (as in \cite[\S2.2]{Han17}). However this is not obviously ONable. Since we require this for slope decompositions, we also define
 \[
 \widetilde{\D}_{\cU,s}^Q \defeq \cD_s\big(N_Q(\Zp),V_{\lambda_0}^{L_Q}(L)\big) \hspace{2pt}\widehat{\otimes}_{L}\cO(\cU).
 \]
Since $\cO(\cU)$ is a contractive Banach $L$-algebra, this space is an ONable Banach $R$-module \cite[Prop.\ A1.3]{Col97}; and the restriction maps $\widetilde{\D}_{\cU,s+1}^Q \rightarrow \widetilde{\D}_{\cU,s}^Q$ are compact by \cite[Cor.\ 2.9]{Buz07}.  By formalism of duals/tensor products there is a natural inclusion
\begin{equation}\label{eq:inclusion of dists}
	r_s : \widetilde{\D}_{\cU,s}^Q \hookrightarrow \D_{\cU,s}^Q,
\end{equation}
defined on pure tensors by $\mu \otimes \alpha \mapsto [(f\otimes \beta) \mapsto \mu(f)\alpha\beta]$ (using Lemma \ref{lem:AU ON}). Then (cf.\ \cite[\S2.2]{Han17}):
	\begin{lemma}\label{lem:j_s}
		For all $s$, there is a compact injective map $j_s : \D_{\cU,s+1}^Q \hookrightarrow \widetilde{\D}_{\cU,s}^Q$ making the following diagram commute:
		\begin{equation}\label{eq:j_s}
		\xymatrix@C=15mm{
			\widetilde{\D}_{\cU,s+1}^Q \ar[r]^{\mathrm{res}}\ar[d]^{r_s}& \widetilde{\D}_{\cU,s}^Q\ar[d]^{r_s}\\
			\D_{\cU,s+1}^Q \ar[r]^{\mathrm{res}}\ar[ru]^{j_s}& \D_{\cU,s}^Q
		}
		\end{equation}
	\end{lemma}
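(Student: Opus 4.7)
The plan is to construct $j_s$ explicitly using the orthonormal basis provided by Lemma \ref{lem:AU ON}. Fix an ON basis $\{e_i\}_{i \in \N}$ of the $L$-Banach space $\cA_s(N_Q(\Zp), V_{\lambda_0}^{L_Q}(L))$, with dual basis $\{e_i^*\}$ of $\cD_s(N_Q(\Zp), V_{\lambda_0}^{L_Q}(L))$; then $\{e_i \otimes 1\}_{i \in \N}$ is an ON basis of $\cA_{\cU,s}^Q$ over $\cO(\cU)$. For $\mu \in \D_{\cU,s+1}^Q$ I would define
\[
	j_s(\mu) \ \defeq \ \sum_{i \in \N} e_i^* \otimes \mu(e_i \otimes 1) \ \in \ \cD_s(N_Q(\Zp), V_{\lambda_0}^{L_Q}(L))\,\widehat{\otimes}_L\, \cO(\cU) \ = \ \widetilde{\D}_{\cU,s}^Q.
\]
Convergence in the ONable $\cO(\cU)$-Banach module $\widetilde{\D}_{\cU,s}^Q$ reduces to showing $\|\mu(e_i \otimes 1)\|_{\cO(\cU)} \to 0$. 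This follows from the estimate $\|\mu(e_i \otimes 1)\|_{\cO(\cU)} \leq \|\mu\| \cdot \|e_i \otimes 1\|_{\cA_{\cU,s+1}^Q} = \|\mu\| \cdot \|e_i\|_{\cA_{s+1}}$ combined with compactness of the inclusion $\cA_s(N_Q(\Zp),V_{\lambda_0}^{L_Q}(L)) \hookrightarrow \cA_{s+1}(N_Q(\Zp),V_{\lambda_0}^{L_Q}(L))$ recalled in \S\ref{sec:analytic functions}, which forces $\|e_i\|_{\cA_{s+1}} \to 0$.

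Commutativity of the two triangles then reduces to direct unwinding. For $r_s \circ j_s = \mathrm{res}$, expanding $f \in \cA_s(N_Q(\Zp), V_{\lambda_0}^{L_Q}(L))$ as $f = \sum_i e_i^*(f)\,e_i$ and using the definition of $r_s$ on pure tensors together with $\cO(\cU)$-linearity and continuity of $\mu$ gives
\[
	r_s\bigl(j_s(\mu)\bigr)(f \otimes g) \ = \ \sum_i e_i^*(f)\,\mu(e_i \otimes 1)\,g \ = \ \mu(f \otimes g)
\]
for all $g \in \cO(\cU)$. The identity $j_s \circ r_{s+1} = \mathrm{res}$ is checked analogously, by expanding $\nu \in \widetilde{\D}_{\cU,s+1}^Q$ in an ON basis of $\cD_{s+1}(N_Q(\Zp),V_{\lambda_0}^{L_Q}(L))$ and matching coefficients. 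Injectivity of $j_s$ is then immediate: if $j_s(\mu) = 0$ then $\mathrm{res}(\mu) = r_s(j_s(\mu)) = 0$, and both $r_s$ and the restriction $\D_{\cU,s+1}^Q \to \D_{\cU,s}^Q$ are injective (the latter recorded just before the lemma).

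The main remaining step is compactness of $j_s$, which I will establish by exhibiting $j_s$ as an operator norm limit of finite-rank maps. Set
\[
	j_s^{(N)}(\mu) \ \defeq \ \sum_{i \leq N} e_i^* \otimes \mu(e_i \otimes 1);
\]
this is a continuous $\cO(\cU)$-linear map whose image is contained in the finite free $\cO(\cU)$-submodule spanned by $e_1^*, \ldots, e_N^*$, hence of finite rank. Using the ON description of $\widetilde{\D}_{\cU,s}^Q$ one estimates
\[
	\|j_s - j_s^{(N)}\|_{\mathrm{op}} \ \leq \ \sup_{i > N} \|e_i\|_{\cA_{s+1}},
\]
which tends to $0$ as $N \to \infty$, once more by compactness of $\cA_s \hookrightarrow \cA_{s+1}$. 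This realises $j_s$ as a norm limit of finite-rank operators, hence compact in the sense of \cite[\S2]{Buz07}. The subtle point here is that $\D_{\cU,s+1}^Q$ is not a priori ONable, so compactness cannot be checked directly by testing $j_s$ on an ON basis of the source; this is precisely the raison d'\^etre of the auxiliary space $\widetilde{\D}_{\cU,s}^Q$, and the finite-rank truncation circumvents the obstruction.
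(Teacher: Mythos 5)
Your construction of $j_s$ is exactly the paper's: your $e_i^*$ are the distributions the paper calls $\nu_i$, and the formula $j_s(\mu) = \sum_i e_i^* \otimes \mu(e_i\otimes 1)$ agrees with \eqref{eq:sum j_s}. The convergence argument — $\|e_i\|_{\cA_{s+1}}\to 0$ from compactness of $\cA_s\hookrightarrow\cA_{s+1}$, hence $\mu(e_i\otimes 1)\to 0$ — is also the paper's. Where you go further is in the justification of compactness. The paper asserts that compactness of $j_s$ follows ``by considering the top triangle,'' i.e.\ from the fact that $j_s\circ r_{s+1} = \mathrm{res}$ is compact; but since $r_{s+1}$ is only an injection, that observation does not by itself yield compactness of $j_s$. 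Your finite-rank truncation $j_s^{(N)}$ with the operator-norm bound $\|j_s - j_s^{(N)}\|_{\mathrm{op}} \leq \sup_{i>N}\|e_i\|_{\cA_{s+1}}$ is a direct verification of the Buzzard-style definition of compactness, and is the cleaner argument precisely because (as you rightly note) $\D_{\cU,s+1}^Q$ is not known to be ONable, so one cannot test compactness on an ON basis of the source. In short: same construction and convergence proof; your treatment of compactness is more explicit and, arguably, more convincing than the paper's one-line appeal to the top triangle.
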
	
	\begin{proof}
		If the map exists, it is compact (by considering the top triangle) and injective (by considering the bottom triangle). To prove existence, let $\{e_i\}_{i \in \N}$ be an ON basis of $\cA_s(N_Q(\Zp),V_{\lambda_0}^{L_Q}(L))$. For $i \in \N$ define distributions
		\[
			\nu_i \in \cD_s\big(N_Q(\Zp),V_{\lambda_0}^{L_Q}(L)\big), \hspace{18pt} \nu_i(e_j) = \left\{\begin{smallmatrix}1 &: i=j\\
				0 &: i\neq j,\end{smallmatrix}\right.
		\]
extended continuously. Then given $\mu \in \D_{\cU,s+1}^Q$, we define
\begin{equation}\label{eq:sum j_s}
	j_s(\mu) \defeq \sum_{i \in \N} \nu_i \otimes \mu(e_i\otimes 1) \in \widetilde{\D}_{\cU,s}^Q.
\end{equation}
To see this is well-defined, note $\{e_i \otimes 1\}$ is an ON basis of $\A_{\cU,s}^Q$ by Lemma \ref{lem:AU ON}. As the inclusion $\A_{\cU,s}^Q \subset \A_{\cU,s+1}^Q$ is compact, the sequence $e_i \otimes 1$ tends to zero in $\A_{\cU,s+1}^Q$, and $\mu(e_i\otimes 1) \to 0$; hence the sum in \eqref{eq:sum j_s} converges in the completed tensor product. Commutativity of \eqref{eq:j_s} follows easily from the definitions.
	\end{proof}

\begin{definition}\label{def:compact frechet}
	Define the space of \emph{parahoric locally analytic distributions over $\cU$} to be
	\[
		\D_{\cU}^Q \defeq \mathrm{Hom}_{\cO(\cU)}\big(\A_{\cU}^Q, \cO(\cU)\big).
	\]
\end{definition}

\begin{lemma}\label{lem:compact frechet}
		The space $\D_{\cU}^Q \cong \varprojlim_s \D_{\cU,s}^Q \cong \varprojlim_s \widetilde{\D}_{\cU,s}^Q$ is a compact Fr\'{e}chet $\cO(\cU)$-module.
\end{lemma}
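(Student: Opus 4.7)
The plan is to establish the two isomorphisms separately and then deduce the topological statement from the already-known ONability of the $\widetilde{\D}_{\cU,s}^Q$.

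For the first isomorphism $\D_\cU^Q \cong \varprojlim_s \D_{\cU,s}^Q$, I would use the universal property. Since $\A_\cU^Q = \varinjlim_s \A_{\cU,s}^Q$ as a locally convex inductive limit (with compact injective transition maps), giving a continuous $\cO(\cU)$-linear map $\A_\cU^Q \to \cO(\cU)$ is equivalent to giving a compatible system of continuous $\cO(\cU)$-linear maps $\A_{\cU,s}^Q \to \cO(\cU)$, i.e.\ an element of $\varprojlim_s \D_{\cU,s}^Q$. The inverse limit topology on $\varprojlim_s \D_{\cU,s}^Q$ agrees with the strong dual topology on $\D_\cU^Q$ coming from the inductive limit.

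For the second isomorphism $\varprojlim_s \D_{\cU,s}^Q \cong \varprojlim_s \widetilde{\D}_{\cU,s}^Q$, the key tool is Lemma \ref{lem:j_s}. The maps $r_s : \widetilde{\D}_{\cU,s}^Q \hookrightarrow \D_{\cU,s}^Q$ and $j_s : \D_{\cU,s+1}^Q \hookrightarrow \widetilde{\D}_{\cU,s}^Q$ interleave to form a cofinal subsystem of the combined inverse system. Concretely, the two inverse systems
\[
	\cdots \to \D_{\cU,s+2}^Q \xrightarrow{\mathrm{res}} \D_{\cU,s+1}^Q \xrightarrow{\mathrm{res}} \D_{\cU,s}^Q, \qquad \cdots \to \widetilde{\D}_{\cU,s+2}^Q \xrightarrow{\mathrm{res}} \widetilde{\D}_{\cU,s+1}^Q \xrightarrow{\mathrm{res}} \widetilde{\D}_{\cU,s}^Q
\]
both factor through the diagram of Lemma \ref{lem:j_s}, so their limits coincide: given a compatible system $(\mu_s) \in \varprojlim_s \D_{\cU,s}^Q$, the sequence $(j_{s-1}(\mu_s))$ defines an element of $\varprojlim_s \widetilde{\D}_{\cU,s}^Q$, and conversely $(\widetilde{\mu}_s) \mapsto (r_s(\widetilde{\mu}_s))$ goes back. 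Commutativity of \eqref{eq:j_s} ensures these constructions are mutually inverse and commute with the transition maps.

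Finally, since each $\widetilde{\D}_{\cU,s}^Q$ is an ONable Banach $\cO(\cU)$-module (as noted before the statement, via \cite[Prop.\ A1.3]{Col97}) and the transition maps $\widetilde{\D}_{\cU,s+1}^Q \to \widetilde{\D}_{\cU,s}^Q$ are compact (by \cite[Cor.\ 2.9]{Buz07}), the identification $\D_\cU^Q \cong \varprojlim_s \widetilde{\D}_{\cU,s}^Q$ exhibits $\D_\cU^Q$ as a countable inverse limit of Banach $\cO(\cU)$-modules with compact connecting maps, which is precisely the definition of compact Fréchet in \cite[\S2.3.12]{Urb11}. The main subtlety I expect is checking that the interleaving gives a genuine isomorphism of topological $\cO(\cU)$-modules (not just of abstract modules); this boils down to showing that the inverse limit topologies defined by the two cofinal subsystems agree, which follows because $r_s$ is continuous and $j_s$ is compact, hence continuous.
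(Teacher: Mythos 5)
Your proposal is correct and takes essentially the same approach as the paper: the paper's own (very terse) proof says the first isomorphism is standard, deduces the second from Lemma~\ref{lem:j_s}, and concludes from the definition of compact Fréchet applied to $\varprojlim_s \widetilde{\D}_{\cU,s}^Q$. You have simply unpacked the "standard" universal-property argument for the first isomorphism and made explicit the cofinal-interleaving argument that the paper compresses into a single reference to Lemma~\ref{lem:j_s}.
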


\begin{proof}
The first isomorphism is standard, and the second isomorphism (between inverse limits) follows from Lemma \ref{lem:j_s}. We conclude since $\varprojlim_s \widetilde{\D}_{\cU,s}^Q$ is compact Fr\'echet by definition.
\end{proof}

\begin{remark}\label{rem:non-classical}
	If $\lambda\in \Wlam$ is any (possibly non-classical) weight, then we may still define an $L_Q(\Z_p)$-module $V_\lambda^{L_Q}(L) \defeq V_{\lambda_0}^{L_Q}(L) \otimes \lambda\lambda_0^{-1}$. Hence we can define $\D_{\lambda,s}^Q(L)$ and $\D_{\lambda}^Q(L)$ identically to Definition \ref{def:parahoric distributions}. Note $V_{\lambda}^{L_Q}(L)$ is independent of the choice of base weight $\lambda_0$, since if $\lambda_0'$ is another choice, by Lemma \ref{lem:changing weight} (in the first isomorphism) we have
		\[
		V_{\lambda}^{L_Q}(L) \defeq V_{\lambda_0}^{L_Q}(L) \otimes_L\lambda\lambda_0^{-1} \cong [V_{\lambda_0'}^{L_Q}(L) \otimes_L \lambda_0(\lambda_0')^{-1}] \otimes_L\lambda\lambda_0^{-1} \cong V_{\lambda_0'}^{L_Q}(L)\otimes_{L}\lambda(\lambda_0')^{-1}.
		\]	
		Hence $\D_{\lambda,s}^Q(L)$ and $\D_{\lambda}^Q(L)$ are also independent of the choice of $\lambda_0$. 
\end{remark}

\begin{remark}
If $\cU' \subset \cU$ is a closed affinoid subspace, then (by definition of $\widetilde{\D}_{\cU,s}^Q$) we have $\D_{\cU}^Q \otimes_{\cO(\cU)}\cO(\cU') \cong \D_{\cU'}^Q$. If $\lambda \in \cU(L)$ corresponds to the maximal ideal $\m_\lambda \subset \cO(\cU)$, we thus have 
\[
	\D_{\cU}^Q \otimes_{\cO(\cU)} \cO(\cU)/\m_\lambda \cong\D_{\lambda}^Q(L),
\]
and a specialisation map $\mathrm{sp}_\lambda : \D_{\cU}^Q \twoheadrightarrow \D_{\lambda}^Q(L)$. Thus $\D_{\cU}^Q$ interpolates $\D_{\lambda}^Q(L)$ as $\lambda$ varies in $\cU(L)$.
\end{remark}

\subsection{Summary of notation}\label{sec:summary of notation}
The notation in the above is heavy. To ease notation, henceforth we will fix a coefficient field $L/\Qp$, containing the fixed splitting field $E$ of $G$, and drop it from the notation, writing $\A^Q_{\lambda,s} = \A^Q_{\lambda,s}(L)$, $V_\lambda^G = V_{\lambda}^G(L)$, etc. 

In Table \ref{table of coeffs} we give a brief key of our notation in the language of \S\ref{sec:analytic functions}.  Note that \emph{all} of the analytic function spaces can be characteristed uniquely by their restrictions to a unipotent subgroup, valued in some Banach module, and then extended uniquely to $J_B$ or $J_Q$ using the weight action. For a classical weight $\lambda$ and any $s \geq 0$, we get the chain of modules
\begin{equation}
	\xymatrix@C=4mm@R=3mm{
			(\text{Banach}) &&V_{\lambda}^G \sar{r}{=} & \A_{\lambda,0}^G \sar{r}{\subset}&  \A_{\lambda,s}^Q \sar{r}{\subset}\sar{d}{\subset}& \A_{\lambda,s}^B \sar{r}{=}\sar{d}{\subset}& \cA_{\lambda,s}^G\sar{d}{\subset}\\
		(\text{Fr\'echet})&&&& \A^Q_{\lambda}\sar{r}{\subset}& \A_{\lambda}^B \sar{r}{=} & \cA_{\lambda}^G.
		}
\end{equation}
The notation we maintain is that $\mathbf{A}^Q$ means $Q$-parabolic induction and $\mathcal{A}^G$ means full induction. Modules with subscripts $s$ are Banach modules, and $s$ denotes the degree of analyticity; those without a subscript $s$ are Fr\'echet modules. Despite the equality $\A_{\lambda,s}^B = \cA_{\lambda,s}^G$, we choose to maintain the separate notation $\A$ and $\cA$ both for clarity and because the modules $\cA^{L_Q}_{\lambda,s}$ play a crucial role in the sequel.

\renewcommand{\arraystretch}{1.4}
\begin{table}[h!]
	\centering
	\begin{tabular}{c|c|c|c|c}
		\textbf{Module}  & \textbf{On unipotent } & \textbf{Extension} & \textbf{Dual}&\textbf{Nomenclature} \\ \hline
		$V_{\lambda}^G$	&  - & - & $V_{\lambda}^{G,\vee}$ & algebraic on $G$\\ \hline
		$\cA_{\lambda,s}^G$	&  $\cA_s(N(\Zp),L)$ & $f : J_B \to L$ & $\cD_{\lambda,s}^G$ & $s$-analytic on $N$\\ \hline
		$\cA_{\lambda}^G$	&  $\cA(N(\Zp),L)$ & $f : J_B \to L$ & $\cD_{\lambda}^G$ & locally analytic on $N$\\ \hline
	$\A_{\lambda,s}^Q$ & $\cA_s\left(N_Q(\Zp),V_{\lambda}^{L_Q}\right)$ & $f : J_Q \to V_{\lambda}^{L_Q}$ & $\D_{\lambda,s}^Q$ & $s$-an.\ on $N_Q$, $s$-alg.\ on $L_Q$ \\ \hline
		$\A_{\lambda}^Q$	&  $\cA\left(N_Q(\Zp),V_{\lambda}^{L_Q}\right)$ & $f : J_Q \to V_{\lambda}^{L_Q}$ & $\D_{\lambda}^Q$& loc.\ \!an.\ on $N_Q$, loc.\ \!alg.\ on $L_Q$ \\ \hline
		$\A_{\cU}^Q$	&  $\cA\left(N_Q(\Zp),V_{\cU}^{L_Q}\right)$ & $f : J_Q \to V_{\cU}^{L_Q}$ & $\D_{\cU}^Q$& loc.\ \!an.\ on $N_Q$, loc.\ \!alg.\ on $L_Q$ 
	\end{tabular}
\caption{\label{table of coeffs} \emph{Modules of coefficients.} }
\end{table}

\subsection{The action of $\Sigma_Q$ and local systems}\label{sec:Sigma action}
\begin{definition}
	Let $\Sigma_Q$ denote the monoid in $G(\Qp)$ generated by $J_Q$ and $T^+$. 
\end{definition}

Let $\diamond$ denote either a single classical weight $\lambda$ or an affinoid $\cU$ in $\Wlam$ for a fixed classical $\lambda_0$. The parahoric $J_Q$ acts on itself by right multiplication, which then give rise to left actions of $J_Q$ on $\A_{\diamond,s}^Q$ and $\A_{\diamond}^Q$ and dual right actions on $\D_{\diamond,s}^Q$ and $\D_{\diamond}^Q$.

The action of $T^+$ is more subtle; we note that any function $f \in \A_{\diamond,s}^Q$ is uniquely determined by its restriction to $B(\Zp)$, upon which $t \in T^+$ acts by $b \mapsto t^{-1}bt$ (by Proposition \ref{prop:valuation T++}(iii)). In itself, this is not compatible with the action of $J_Q$ above due to the left multiplication by $t^{-1}$. To rectify this, note that our choice of uniformisers defines a splitting 
\begin{equation}\label{eq:splitting}
T(\Qp) \isorightarrow T(\Zp) \times T(\Qp)/T(\Zp), \hspace{12pt} t \mapsto (\sigma(t),\zeta(t)).
\end{equation} 
Also write $\zeta$ for the composition $T(\Qp) \xrightarrow{\zeta} T(\Qp)/T(\Zp) \hookrightarrow T(\Qp)$. Then $T^+$ acts on $B(\Zp)$ by
\begin{equation}\label{eq:action of t}
	b * t =  \zeta(t)^{-1}bt = \sigma(t)t^{-1}bt.
\end{equation}
Now if $t \in T(\Zp) = T(\Qp) \cap J_Q$, then $\sigma(t)t^{-1} = 1$ and \eqref{eq:action of t} coincides with right translation by $t$. If $f \in \A_{\diamond,s}^Q$, define $t*f$ on $B(\Zp)$ by $(t*f)(b) = f(b*t)$, and extend to $J_Q$ via \eqref{eq:parahoric transform}. A simple check shows $t*f \in \A_{\diamond,s}^Q$ is well-defined, giving a left action of $T^+$ on $\A_{\diamond,s}^Q$ and a right action on $\D_{\diamond,s}^Q$. 

\begin{notation}
 	If $g \in \Sigma_Q$, denote the action of $g$ on $f \in \A_{\diamond,s}^Q$ by $g * f$, and on $\mu \in \D_{\diamond,s}^Q$ by $\mu * g$. 
\end{notation}

	\begin{lemma}
		 The image of the map $r_s : \widetilde{\D}_{\cU,s}^Q \hookrightarrow \D_{\cU,s}^Q$ from \eqref{eq:inclusion of dists} is preserved by $\Sigma_Q$. 
	\end{lemma}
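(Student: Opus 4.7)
The plan is to factor the $\Sigma_Q$-action on $\A_{\cU,s}^Q$ into pieces that visibly respect the tensor-product decomposition $\cA_s(N_Q(\Zp),V_{\lambda_0}^{L_Q})\widehat{\otimes}_L\cO(\cU)$, so that their transposes act on $\widetilde{\D}_{\cU,s}^Q=\cD_s(N_Q(\Zp),V_{\lambda_0}^{L_Q})\widehat{\otimes}_L\cO(\cU)$ and intertwine $r_s$. Since $\Sigma_Q$ is generated as a monoid by $J_Q$ and $T^+$, and $(gh)^* = h^*\circ g^*$, it suffices to check invariance for elements of each generating set.

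For $g\in J_Q$, apply the parahoric decomposition pointwise to write $n^{*}g \in J_Q^-\cdot t_g(n^*)\cdot n_g(n^*)$, where $n^*\mapsto t_g(n^*)\in L_Q(\Zp)$ and $n^*\mapsto n_g(n^*)\in N_Q(\Zp)$ are analytic. Using the $V_\cU^{L_Q}$-action $\ell\mapsto\langle\ell\rangle_{\lambda_0}\otimes\chi_\cU(\ell)$, direct evaluation on a pure tensor yields
\[
(g*(f\otimes\beta))(n^*) = \langle t_g(n^*)\rangle_{\lambda_0}\,f(n_g(n^*))\,\otimes\,\chi_\cU(t_g(n^*))\beta,
\]
which factors as $g* = M_{\tau_g}\circ(g_0\otimes\mathrm{id}_{\cO(\cU)})$. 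Here $g_0$ is the $g$-action at the single weight $\lambda_0$ on $\cA_s(N_Q(\Zp),V_{\lambda_0}^{L_Q})$, and $M_{\tau_g}$ is pointwise multiplication by $\tau_g := \chi_\cU(t_g(\,\cdot\,)) \in \cA_s(N_Q(\Zp),\cO(\cU))$ on the $\cO(\cU)$-factor. The transpose $g_0^*\otimes\mathrm{id}$ visibly preserves $\widetilde{\D}_{\cU,s}^Q$ and intertwines $r_s$. For $M_{\tau_g}$, expand $\tau_g = \sum_i \sigma_i\otimes\beta_i$ in $\cA_s(N_Q(\Zp),L)\widehat{\otimes}_L\cO(\cU)$ with $\|\sigma_i\|\|\beta_i\|\to 0$, and define
\[
M_{\tau_g}^{\dagger}\Bigl(\sum_k \nu_k \otimes \alpha_k\Bigr) := \sum_{i,k}(\sigma_i\cdot\nu_k)\otimes\beta_i\alpha_k, \qquad (\sigma_i\cdot\nu)(h) := \nu(\sigma_i h).
\]
Convergence in $\widetilde{\D}_{\cU,s}^Q$ follows from $\|\sigma_i\cdot\nu_k\|\|\beta_i\alpha_k\|\leq\|\sigma_i\|\|\beta_i\|\|\nu_k\|\|\alpha_k\|\to 0$, and the intertwining $r_s\circ M_{\tau_g}^{\dagger} = M_{\tau_g}^{*}\circ r_s$ reduces to a direct check on pure tensors.

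For $g = t\in T^+$, the computation using $n^{*}*t = \sigma(t)\cdot(t^{-1}n^{*}t)$ from \eqref{eq:action of t} shows the Levi component of $n^{*}*t$ is $\sigma(t)$, independent of $n^*$. Hence $\tau_t = \chi_\cU(\sigma(t))\in\cO(\cU)$ is a scalar, $M_{\tau_t}$ is scalar multiplication, and its transpose trivially preserves $\widetilde{\D}_{\cU,s}^Q$. Combining with the monoid generation of $\Sigma_Q$ gives the lemma.

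The main technical step is the construction of $M_{\tau_g}^{\dagger}$: unlike $g_0\otimes\mathrm{id}$, the operator $M_{\tau_g}$ is not itself a pure tensor of $L$-linear operators, so one must first expand $\tau_g$ via the tensor-product structure on $\cA_s(N_Q(\Zp),\cO(\cU))$ and then verify termwise dual convergence using the submultiplicative estimate $\|M_{\sigma_i}\|\leq\|\sigma_i\|$ for the action of multiplication on $\cD_s(N_Q(\Zp),V_{\lambda_0}^{L_Q})$. All remaining compatibilities with $r_s$ reduce to identities on pure tensors that then extend by continuity.
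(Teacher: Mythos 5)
Your proof is correct, and it follows the same overall strategy as the paper's second (``alternatively\dots'') argument: reduce to the monoid generators $J_Q$ and $T^+$, and write the action down explicitly in terms of the tensor decomposition $\widetilde{\D}_{\cU,s}^Q = \cD_s(N_Q(\Zp),V_{\lambda_0}^{L_Q})\widehat{\otimes}_L\cO(\cU)$. However, you are actually more careful at the key step, and your treatment fills in a real subtlety that the paper's displayed formula glosses over.

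The paper's sketched formula for $j\in J_Q$ writes the parahoric decomposition of $j$ itself, $j = j^-\ell_j n_j$, and defines $(\mu\otimes\alpha)*j = (\mu*j)\otimes\chi_{\cU_0}(\ell_j)\alpha$, i.e.\ the universal-character twist is a \emph{constant} in $\cO(\cU_0)^\times$. But, as you observe correctly, the $*$-action of $j$ on $f\in\A_{\cU,s}^Q$ is computed pointwise via the parahoric decomposition of $n^*j$, and the Levi part $\ell_{n^*j}$ genuinely depends on $n^*$: already for $\GL_2$ with $j=\smallmatrd{a}{b}{c}{d}$, one has $\ell_{n^*j} = \mathrm{diag}(a+xc,\,\det(j)/(a+xc))$, with $n^* = \smallmatrd{1}{x}{0}{1}$. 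Hence the twist $\chi_{\cU_0}(\ell_{n^*j})$ is an element of $\cA_s(N_Q(\Zp),\cO(\cU))$, not of $\cO(\cU)^\times$; this is exactly your $\tau_g$. Because $M_{\tau_g}$ is not a pure tensor of $L$-linear operators, you cannot transpose it directly; your construction of $M_{\tau_g}^\dagger$ via a tensor expansion of $\tau_g$ and termwise dualisation, with convergence from the submultiplicative bound $\|\sigma_i\cdot\nu\|\le\|\sigma_i\|\|\nu\|$, is precisely the missing argument. The factorisation $g* = M_{\tau_g}\circ(g_0\otimes\mathrm{id})$ and the verification $r_s\circ M_{\tau_g}^\dagger = M_{\tau_g}^*\circ r_s$ on pure tensors complete the $J_Q$ case. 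Your computation for $t\in T^+$, showing the Levi part of $n^**t = \sigma(t)t^{-1}n^*t$ is exactly $\sigma(t)$ independent of $n^*$, is also correct, and here the multiplier collapses to the constant $\chi_{\cU_0}(\sigma(t))$, matching the paper's formula for $T^+$ exactly.

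One small point worth adding: you implicitly use that $\tau_g$ is $s$-analytic. This holds because $n^*\mapsto t_g(n^*)$ is algebraic (a composition of group operations and the parahoric-decomposition projections) and $\chi_{\cU_0}$ is $s$-analytic for $s\ge s[\cU]$, which is the regime in which $\A_{\cU,s}^Q$ is defined anyway. With that noted, the argument is complete and, to my reading, is the careful version of what the paper's reference to \cite[Rem.\ 3.1]{Bel12} and its alternative sketch intend.
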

	\begin{proof}
		We can argue exactly as in \cite[Rem.\ 3.1]{Bel12}. Alternatively, we can directly write down an action on $\widetilde{\D}_{\cU,s}^Q$: let $j \in J_Q, t \in T^+$ and $\mu\otimes\alpha \in \D_{\lambda_0,s}^Q(L)\widehat{\otimes}_L \cO(\cU_0)$, which we identify with $\widetilde{\D}_{\cU,s}^Q$ via restriction to $N_Q(\Zp)$. Write $j = j^-\ell_j n_j$ under \eqref{eq:parahoric decomp}. On pure tensors, define
		\[
			(\mu \otimes \alpha)*j = (\mu*j) \otimes \chi_{\cU_0}(\ell_j)\alpha, \hspace{12pt} (\mu\otimes \alpha)*t = (\mu*t) \otimes \chi_{\cU_0}(\sigma(t))\alpha,
		\]
		extended by continuity. One may check explicitly that \eqref{eq:inclusion of dists} is equivariant for the $*$-actions. 
\end{proof}

Suppose $K \subset \cG(\A_f)$ is open compact with $K_p \subset J_Q$. Via projection to $K_p$, these spaces of locally analytic distributions are $K$-modules which then, via \eqref{eq:local systems}, give local systems over the locally symmetric space, which in a slight abuse of notation we denote by the same symbols.

\begin{definition} 
	The \emph{parahoric overconvergent cohomology groups} (with respect to the parabolic $Q$) are the groups  $\h^i_{\mathrm{c}}(S_{K},\D_{\diamond,s}^Q)$, $\h^i_{\mathrm{c}}(S_{K},\D_{\diamond}^Q)$ and  $\hc{i}(S_K,\widetilde\D_{\cU,s}^Q)$.
\end{definition}

 The action of $t \in T^+$ then allows us to define Hecke operators $U_t$ on the parahoric overconvergent cohomology groups, exactly as in \cite[\S2.1]{Han17}. We extend this to an action of $\uhp(K)$ by letting $\cG'(F_v)$ act trivially on $\D_{\diamond,s}^Q$ for all $v\nmid p$.
 
 \begin{remarks}\label{rem:dot action}
 	\begin{itemize}\setlength{\itemsep}{0pt}
 \item[(i)]	Note that more or less by definition, the $*$-action of $\Sigma_Q$ defined here preserves the integral subspaces $\D_{\lambda,s}^Q(\cO_L)$ of \S\ref{sec:integral}.
 \item[(ii)] The $*$-action also preserves algebraic subspaces. In particular, we get a $*$-action of $\Sigma_Q$ on $V_{\lambda}^G(L)$ which preserves $V_{\lambda}^G(\cO_L)$. But any $f \in V_{\lambda}^G(L)$ extends uniquely from $G(\Zp)$ to $G(\Qp)$, from which we get a natural `algebraic' action of $G(\Qp)$ defined by $(t \cdot f)(g) \defeq f(gt)$. From the definition, we find that for $f \in V_{\lambda}^G$ and $t \in T^+$, we have
 \begin{equation}\label{eq:modify}
 	(t * f)(g) = f(\sigma(t)t^{-1}gt) = \lambda\big(\sigma(t)t^{-1}\big)(t \cdot f)(g)
 \end{equation}
(compare \cite[(15)]{Urb11}). The $\cdot$-action does \emph{not} preserve $V_{\lambda}^G(\cO_L)$, and the $*$-action can be viewed as an `optimal' integral normalisation of it.

\item[(iii)] For $\GL_2$, it is easy to write down the $\cdot$-action on $V_{\lambda}^{G,\vee}$ explicitly, and one easily sees that this explicit action extends to distributions; this is done, for example, in \cite{PS11,Bel12,BW_CJM}. We warn the reader, however, that this does not give the $*$-action of $T^+$ on distributions defined here: in particular, it does not preserve integrality (see \cite[\S9.1]{BW_CJM}).
 \end{itemize}
For the remainder of this paper, unless explicitly stated, all actions will be the $*$-actions.
 \end{remarks}


\subsection{Compact operators and slope decompositions} 
We now recap the (standard) arguments that show the parahoric overconvergent cohomology groups admit slope decompositions with respect to $Q$-controlling operators.

\begin{lemma}\label{lem:compact}
	If $t \in T^{++}_Q$, then $t$ acts compactly on $\D_{\lambda,s}^Q$ and $\D_{\lambda}^Q$, and on $\widetilde{\D}_{\cU,s}^Q$ and $\D_{\cU}^Q$. 
\end{lemma}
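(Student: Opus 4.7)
The plan is to exploit the defining contraction property of $T^{++}_Q$: conjugation by $t$ sends $N_Q^{s-1}$-cosets into $N_Q^s$-cosets, which will improve analyticity by one level under the $*$-action and hence factor it through a compact inclusion.

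First I would reduce the Fréchet cases to the Banach cases. For $\D_\lambda^Q$ and $\D_\cU^Q$ the natural compact Fréchet structures in the sense of \cite[\S2.3.12]{Urb11} are
\[
	\D_\lambda^Q \;=\; \varprojlim_s \D_{\lambda,s}^Q, \qquad \D_\cU^Q \;=\; \varprojlim_s \widetilde{\D}_{\cU,s}^Q,
\]
the second by Lemma \ref{lem:compact frechet}; a compatible system of compact operators on the Banach pieces then yields a compact Fréchet operator on the limit. So it suffices to treat $\D_{\lambda,s}^Q$ and $\widetilde{\D}_{\cU,s}^Q$.

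For $\D_{\lambda,s}^Q$, the heart of the argument is to write the $*$-action explicitly on restrictions to $N_Q(\Zp)$. Combining the transformation property \eqref{eq:parahoric transform}, the parahoric decomposition \eqref{eq:parahoric decomp}, and the formula $b*t = \sigma(t)t^{-1}bt$ of \eqref{eq:action of t}, together with the fact that $\sigma(t)\in T(\Zp)$ normalises $N_Q(\Zp)$, I would derive
\[
	(t*f)(n) \;=\; \sigma(t)\cdot f\!\left(t^{-1} n t\right), \qquad n \in N_Q(\Zp),
\]
where $\sigma(t)$ acts on $V_\lambda^{L_Q}$ via the weight. Writing $\phi_t(n) \defeq t^{-1} n t$, the defining property $t^{-1} N_Q^{s-1} t \subset N_Q^{s}$ of $T^{++}_Q$ (together with Proposition \ref{prop:Q controlling}) says that $\phi_t$ maps each $N_Q^{s-1}$-coset into a single $N_Q^{s}$-coset. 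If $f \in \A_{\lambda,s}^Q$ is $s$-analytic, then $f\circ \phi_t$ is therefore analytic on each $N_Q^{s-1}$-coset, so $t*f \in \A_{\lambda,s-1}^Q$. Hence the $*$-action factors as
\[
	\A_{\lambda,s}^Q \;\longrightarrow\; \A_{\lambda,s-1}^Q \;\xhookrightarrow{\;\iota\;}\; \A_{\lambda,s}^Q,
\]
where $\iota$ is compact by \S\ref{sec:analytic functions}. Dualising, $t*$ on $\D_{\lambda,s}^Q$ factors through the compact restriction $\D_{\lambda,s}^Q \to \D_{\lambda,s-1}^Q$ (the transpose of $\iota$), and so is compact.

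For $\widetilde{\D}_{\cU,s}^Q = \cD_s(N_Q(\Zp),V_{\lambda_0}^{L_Q})\widehat{\otimes}_L\cO(\cU)$, the explicit action of $t$ written down in the lemma preceding Definition \ref{def:compact frechet} is, on pure tensors, the tensor product of the previous $t*$ on $\cD_s(N_Q(\Zp),V_{\lambda_0}^{L_Q})$ with multiplication by the scalar $\chi_{\cU_0}(\sigma(t))\in\cO(\cU)$. The first factor is compact by the argument above; tensoring a compact operator with a continuous one on ONable Banach $R$-modules (cf.\ Lemma \ref{lem:AU ON} and \cite[Prop.\ A1.3]{Col97}) produces a compact operator, yielding the result. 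The main technical obstacle is really just the bookkeeping needed to establish the formula for $(t*f)(n)$ and to check that $f\circ\phi_t$ retains the transformation property \eqref{eq:parahoric transform}, so that it genuinely lies in $\A_{\lambda,s-1}^Q$ rather than merely in $\cA_{s-1}(J_Q,V_\lambda^{L_Q})$; after that, the contraction property does all the work.
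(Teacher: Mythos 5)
Your proof follows essentially the same route as the paper's: the $*$-action improves the radius of analyticity because of the contraction property defining $T_Q^{++}$, hence factors through a compact inclusion, and the Fréchet and family cases follow by $\varprojlim$ and completed tensor product respectively. Your additional bookkeeping — verifying $(t*f)(n) = \sigma(t)\cdot f(t^{-1}nt)$ and that $f\circ\phi_t$ retains the transformation property \eqref{eq:parahoric transform} — is a correct unwinding of what the paper leaves implicit.

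One small indexing slip: you phrase the contraction as sending $\A^Q_{\lambda,s}$ into $\A^Q_{\lambda,s-1}$, which is meaningless at $s=0$ since $\A^Q_{\lambda,-1}$ is undefined. The paper shifts the other way, showing $t * \A^Q_{\lambda,s+1} \subset \A^Q_{\lambda,s}$ for all $s\geq 0$, so that on distributions the $*t$ action on $\D^Q_{\lambda,s}$ factors as $\D^Q_{\lambda,s} \to \D^Q_{\lambda,s+1} \hookrightarrow \D^Q_{\lambda,s}$ through the compact restriction; this handles every $s$ uniformly. Your argument is valid for $s\geq 1$ but should be re-indexed to cover $s=0$.
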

\begin{proof} At a single weight $\lambda$, we follow \cite[Lemma 3.2.8]{Urb11}. Firstly, since by definition of $T_Q^{++}$ we have $t^{-1}N_Q^s(\Zp)t \subset N_Q^{s+1}(\Zp)$, we see that $t \cdot \A^Q_{\lambda,s+1}(L) \subset \A_{\lambda,s}^Q(L)$. Hence on distributions, we have $\D_{\lambda,s}^Q(L) \cdot t \subset \D_{\lambda,s+1}^Q(L)$ (that is, $t$ improves the analyticity). Thus the action of $t$ factors through the (compact) inclusion map $\D_{\lambda,s+1}^Q(L) \hookrightarrow \D_{\lambda,s}^Q(L)$. This ensures that it acts compactly on $\D_{\lambda,s}^Q(L)$, and also the limit $\D_{\lambda}^Q(L)$ by definition. The statements for $\cU$ then follow combining this with \cite[Lem.\ 2.9]{Buz07}, the definition of $\widetilde{\D}_{\cU,s}^Q$ (from \S\ref{sec:distributions in families}), and Lemma \ref{lem:compact frechet}.
\end{proof}

If $M$ is a module admitting a slope $\leq h$ decomposition with respect to an operator $U$ (see, for example, \cite[Definition 2.3.1]{Han17}), we write it as
\begin{equation}\label{eq:slope decomp}
M = M^{U\leq h} \oplus M^{U > h}.
\end{equation}

Let $\hc{\bullet}$ denote compactly supported (Betti) cohomology, dual to the Borel--Moore homology. The following adaptation of \cite[\S4]{AS08} is the main reason we introduced the (ONable) spaces $\widetilde{\D}_{\cU,s}^Q$.

\begin{proposition} \label{prop:slopes} 
	Let $K$ be an open compact subgroup of $\cG(\A_f)$ with $K_p \subset J_Q$, let $\cU \subset \Wlam$ be an open affinoid, let $h \geq 0$, and let $t \in T^{++}_Q$.  Then, possibly up to replacing $\cU$ with a smaller affinoid neighbourhood of $\lambda$:
	\begin{itemize}\setlength{\itemsep}{0pt}
		\item[(i)] The spaces $\h_{\mathrm{c}}^\bullet(S_K,\D_{\lambda,s}^Q)$ and $\hc{\bullet}(S_K,\widetilde{\D}_{\cU,s}^Q)$ admit slope $U_t \leq h$ decompositions for all $s$. 
		\item[(ii)] The small slope parts $\hc{\bullet}(S_K,\D_{\lambda,s}^Q)^{U_t \leq h}$ and $\hc{\bullet}(S_K,\widetilde{\D}_{\cU,s}^Q)^{U_t \leq h}$ are independent of $s$.
		\item[(iii)] Both $\hc{\bullet}(S_K,\D_{\lambda}^Q)$ and $\hc{\bullet}(S_K,\D_{\cU}^Q)$ admit slope $U_t \leq h$ decompositions, and for any $s$
		\[
			\hc{\bullet}(S_K,\D_{\lambda}^Q)^{U_t \leq h} \cong \hc{\bullet}(S_K,\D_{\lambda,s}^Q)^{U_t\leq h}, \hspace{12pt} \hc{\bullet}(S_K,\D_{\cU}^Q)^{U_t \leq h} \cong \hc{\bullet}(S_K,\widetilde{\D}_{\cU,s}^Q)^{U_t\leq h}.
		\]
	\end{itemize}
\end{proposition}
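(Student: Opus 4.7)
The strategy is the by-now standard adaptation of the Ash--Stevens machinery \cite[\S4]{AS08} to the parahoric setting, together with Urban's formalism for compact Fr\'echet modules \cite[\S2.3.12]{Urb11}, as carried out for $Q = B$ in \cite[\S2.2--2.3]{Han17}. The novelty here is purely notational: the coefficients are the parahoric distribution modules of \S\ref{sec:parabolic distributions}, which by Lemmas \ref{lem:AU ON}, \ref{lem:compact frechet} and \ref{lem:compact} retain all the structural properties needed.

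The plan is as follows. Pick a finite triangulation of the Borel--Serre compactification of $S_K$ (or, equivalently, of $S_K$ itself, using an appropriate finite cover by contractibles). Functorially in the coefficient module $M$, this produces a finite complex $C^\bullet(K, M)$ computing $\hc{\bullet}(S_K, M)$, each of whose terms is a finite direct sum of copies of $M$; moreover, the action of $U_t$ on cohomology is induced by an endomorphism of $C^\bullet(K,M)$ that in each degree comes from the action on $M$ itself. Applied to $M = \widetilde{\D}_{\cU,s}^Q$ (resp.\ $\D_{\lambda,s}^Q$), each term $C^i(K, M)$ is an ONable Banach $\cO(\cU)$-module (resp.\ $L$-module) by Lemma \ref{lem:AU ON} (resp.\ by \S\ref{sec:analytic functions}), and $U_t$ acts compactly on each term by Lemma \ref{lem:compact} together with \cite[Lem.\ 2.8--2.9]{Buz07}.

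For part (i), Buzzard's Riesz theory \cite[Thm.\ 3.3]{Buz07} then gives a slope $\leq h$ decomposition on each $C^i$, after possibly shrinking $\cU$ around $\lambda$ so that the Fredholm series of $U_t$ on $C^i$ factors as $Q(T) \cdot S(T)$ with $Q$ a polynomial whose roots have valuation $\leq h$ and $S$ coprime to $Q$. Since slope decompositions are unique and functorial, they pass to the finite complex and hence to cohomology (see \cite[\S4]{AS08} or \cite[Prop.\ 2.3.3]{Han17}). At a single weight $\lambda$ no shrinking is necessary and the argument is easier.

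For part (ii), the key observation -- used in the proof of Lemma \ref{lem:compact} -- is that $U_t$ factors through the compact restriction $\D_{\lambda,s+1}^Q \hookrightarrow \D_{\lambda,s}^Q$. This means the restriction map is equivariant and, on the slope $\leq h$ parts (where $U_t$ is an isomorphism), it is forced to be an isomorphism too; a standard diagram chase (cf.\ \cite[\S4]{AS08}, \cite[Prop.\ 3.2.15]{Urb11}) transfers this to cohomology. The identical argument handles $\widetilde{\D}_{\cU,s}^Q$ in families. For part (iii), by Lemma \ref{lem:compact frechet} we have $\D_{\cU}^Q \cong \varprojlim_s \widetilde{\D}_{\cU,s}^Q$ as compact Fr\'echet $\cO(\cU)$-modules (similarly at a single weight); Urban's formalism \cite[\S2.3.12--13]{Urb11} then produces the slope decomposition on the inverse limit, and by (ii) the slope $\leq h$ part stabilises, giving the claimed identification with the Banach version at any $s$.

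The main obstacle is bookkeeping around the shrinking of $\cU$ in part (i): one must check that the Fredholm determinant of $U_t$ on the finite complex over $\cO(\cU)$ behaves well enough to factor as above on a neighbourhood of $\lambda$, and that the resulting slope $\leq h$ part is a finite projective $\cO(\cU)$-module. Both points follow from the ONability in Lemma \ref{lem:AU ON} and the formalism of \cite[\S2--3]{Buz07}, but these are the steps where our setting (parahoric, Fr\'echet, in families) really uses the non-trivial properties established in \S\ref{sec:distributions in families}.
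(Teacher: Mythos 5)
Your proposal is correct and follows essentially the same approach as the paper: finite chain complexes from the Ash--Stevens/Hansen framework, ONability of the parahoric Banach modules (Lemma \ref{lem:AU ON}), compactness of $U_t$ (Lemma \ref{lem:compact}) lifting to the complex, Buzzard--Riesz theory for (i), the factorisation of $U_t$ through the compact restriction for (ii), and Urban's compact Fr\'echet formalism with Lemma \ref{lem:compact frechet} for the inverse limit in (iii). The paper's proof is simply more terse, deferring to Propositions 2.3.3--2.3.5 and 3.1.5 of \cite{Han17}, but the substance is identical.
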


\begin{proof} 
	These results are all standard, so we only give analogous references. The modules we have defined give rise to compactly supported chain complexes $C_{\mathrm{c}}^\bullet(K,\D_{\lambda,s}^Q)$ and $C_{\mathrm{c}}^\bullet(K,\widetilde{\D}_{\cU,s}^Q)$, as at the end of \cite[\S3]{Han17}, and the compactness of $t$ on distributions lifts to compactness of $t$ on the complex. The cohomology of this complex gives rise to the compactly supported cohomology groups in which we are primarily interested. Since the $\D_{\lambda,s}^Q$ and $\widetilde{\D}_{\cU,s}^Q$ are ONable, Propositions 2.3.3--2.3.5 of \cite{Han17} then show part (i). Part (ii) is the parahoric analogue of Proposition 3.1.5 \emph{op.\ cit}., arguing identically using instead the parahoric chain complexes. Part (iii) follows in the inverse limit (using Lemma \ref{lem:compact frechet} for distributions over $\cU$).
\end{proof}

Note that, directly from the definitions, if $M$ is a $\Qp$-module that admits a slope decomposition with respect to an operator $U$, and $\beta \in \Qp$, then 
\begin{equation}\label{eq:scaled slope}
	M^{(\beta U)\leq h} \cong M^{U \leq [h - v_p(\beta)]}.
\end{equation}


\section{Parahoric classicality theorems}\label{sec:classicality}
We now prove our central result, a relative classicality theorem for parahoric overconvergent cohomology. This encompasses the analogous theorem for lifting from fully algebraic to fully analytic coefficients, and indeed we expect that it gives a numerically optimal slope bound for such a result. Our main tool is a parahoric version of Jones and Urban's locally analytic Bernstein--Gelfand--Gelfand (BGG) resolution for classical weights $\lambda$ (Corollary \ref{cor:loc an BGG 2}), which we develop in \S\ref{sec:usual BGG}-\ref{sec:la BGG}. This can also be considered as a locally analytic version of the main result of \cite{Lep77}.

As in \S\ref{sec:summary of notation}, we fix a coefficient field $L/\Qp$, containing $E$ splitting $G$, and drop it from notation.

\subsection{The parahoric classicality theorem}\label{sec:Q-classicality} 
Fix throughout this section a parabolic $Q \subset G$, an open compact $K \subset \cG(\A_f)$ with $K_p \subset J_Q$ and a classical weight $\lambda$. Dualising the natural inclusion $V_{\lambda}^G$ $\subset \A_{\lambda,0}^Q \subset \A_{\lambda,s}^Q$ yields a map $\D_{\lambda,s}^Q \to V_{\lambda}^{G,\vee},$ and a corresponding map on cohomology:
\begin{equation}
\rho_\lambda : 	\h^\bullet_{\mathrm{c}}\big(S_{K}, \D_{\lambda,s}^Q\big) \longrightarrow \h^\bullet_{\mathrm{c}}\big(S_{K},V_{\lambda}^{G,\vee}\big).
\end{equation}

\begin{definition} \label{def:Q non-critical} Let $\phi$ be a system of Hecke eigenvalues (for $\uhp(K)$) occurring in $\hc{\bullet}(S_K,V_{\lambda}^{G,\vee})$. We say $\phi$ is \emph{$Q$-non-critical} if the map $\rho$ restricts to an isomorphism of $\phi$-generalised eigenspaces
	\[
	\h^\bullet_{\mathrm{c}}\big(S_{K}, \D_{\lambda,s}^Q\big)_{\phi} \isorightarrow \h^\bullet_{\mathrm{c}}\big(S_{K},V_{\lambda}^{G,\vee}\big)_{\phi}.
	\]
\end{definition}

	Such systems $\phi$ naturally arise from `$p$-refined' automorphic representations $\tilde\pi$; see \S\ref{sec:warning}. We say such a $\tilde\pi$ is $Q$-non-critical if the associated $\phi$ is. We observe that for finite slope systems, this definition has no dependence on the radius of analyticity $s$, so is well-defined; and in fact we may pass to distributions that are fully locally analytic in $Q$:

\begin{lemma}
	Let $\phi$ be a $Q$-non-critical system of Hecke eigenvalues, and assume $\phi$ has $Q$-finite slope (i.e.\ $\phi(U_t) \neq 0$ for some $t \in T_Q^{++}$). Then for any $s \geq 0$, we have
	\[
	\hc{\bullet}\big(S_K,\D_{\lambda}^Q\big)_{\phi} \cong \hc{\bullet}\big(S_K,\D_{\lambda,s}^Q\big)_{\phi} \cong \h^\bullet_{\mathrm{c}}\big(S_{K},V_{\lambda}^{G,\vee}\big)_{\phi}
	\]
\end{lemma}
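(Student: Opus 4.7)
The plan is to bootstrap from the definition of $Q$-non-criticality (which is stated at a fixed radius $s$) to fully locally analytic coefficients, using the slope decomposition machinery of Proposition \ref{prop:slopes} together with the $Q$-finite slope hypothesis.

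First, I would choose $t \in T_Q^{++}$ with $\phi(U_t) \neq 0$, and set $h \defeq v_p(\phi(U_t))$. Then the $\phi$-generalised eigenspace in any $\uhp(K)$-module $M$ (on which $U_t$ acts) admitting a slope $\leq h$ decomposition lies entirely inside $M^{U_t \leq h}$, because $U_t$ acts on the $\phi$-part with generalised eigenvalue $\phi(U_t)$ of slope exactly $h$. Since $\phi$-localisation is exact and commutes with direct sum decompositions, one has $M_\phi \cong (M^{U_t \leq h})_\phi$ whenever this slope decomposition exists.

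Next, I would apply this observation to the modules $\D_{\lambda,s}^Q$ and $\D_\lambda^Q$. By Proposition \ref{prop:slopes}(i), both $\hc{\bullet}(S_K, \D_{\lambda,s}^Q)$ and $\hc{\bullet}(S_K, \D_\lambda^Q)$ admit slope $U_t \leq h$ decompositions. By parts (ii) and (iii) of the same proposition, the small slope parts
\[
\hc{\bullet}(S_K, \D_{\lambda,s}^Q)^{U_t \leq h} \cong \hc{\bullet}(S_K, \D_\lambda^Q)^{U_t \leq h}
\]
are independent of $s$ (and agree with the locally analytic version). Localising at $\phi$ and using the first paragraph gives the first isomorphism
\[
\hc{\bullet}(S_K, \D_\lambda^Q)_\phi \cong \hc{\bullet}(S_K, \D_{\lambda,s}^Q)_\phi
\]
for every $s \geq 0$. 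The second isomorphism is then immediate from Definition \ref{def:Q non-critical}: by assumption, $\rho_\lambda$ induces an isomorphism $\hc{\bullet}(S_K, \D_{\lambda,s}^Q)_\phi \isorightarrow \hc{\bullet}(S_K, V_\lambda^{G,\vee})_\phi$ for some (hence, by the above independence statement, every) $s$.

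I do not anticipate a real obstacle: the argument is essentially a formal consequence of Proposition \ref{prop:slopes} combined with exactness of localisation. The only point requiring slight care is the verification that localising at $\phi$ genuinely lands inside the $U_t \leq h$ slope part — this uses crucially that $h = v_p(\phi(U_t))$ and that $\phi$ has finite slope, which is exactly the hypothesis $\phi(U_t) \neq 0$ for some $t \in T_Q^{++}$. The compatibility between the Hecke action and the slope decomposition is built into the construction, since all Hecke operators in $\uhp(K)$ commute with $U_t$.
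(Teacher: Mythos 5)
Your proof is correct and takes essentially the same approach as the paper, whose proof is the one-line remark that the result follows from Proposition \ref{prop:slopes} applied with some $h \geq v_p(\phi(U_t))$; you have simply spelled out the implicit reasoning (that the finite-slope hypothesis places the $\phi$-localisation inside the slope $\leq h$ part, which is then independent of $s$ and of whether one works with $\D_{\lambda,s}^Q$ or $\D_{\lambda}^Q$, after which $Q$-non-criticality supplies the comparison with $V_{\lambda}^{G,\vee}$).
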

\begin{proof}
	This follows from Proposition \ref{prop:slopes} applied with some $h \geq v_p(\phi(U_t))$. 
\end{proof}

\begin{definition}
	For $\lambda$ a classical weight, $t \in T^+$ and $\alpha \in \Delta$, let
	\begin{align*}\label{eq:non-crit slope}
	h^{\mathrm{crit}}(t,\alpha,\lambda) &\defeq v_p\left(t^{w_\alpha*\lambda - \lambda}\right)\\ 
	&= -\big[\langle \lambda, \alpha^\vee\rangle + 1\big] \cdot v_p(\alpha(t)).
	\end{align*}
	Here $t^\lambda \defeq \lambda(t)$, and the equality is \eqref{eq:weyl action on weights}.
\end{definition}

This provides a numerical criterion for $Q$-non-criticality. Define a maximal chain of parabolics
\[
Q = P_0 \subset P_{1} \subset \cdots \subset P_m = G
\]
containing $Q$, so that $\Delta_{P_{i}} = \Delta_{P_{i-1}} \cup \{\alpha_{i}\}$ for some simple root $\alpha_{i}$. For each $i = 1, ..., m$, let $t_i \in T^+$ such that $v_p(\alpha_i(t_i)) < 0$, and let $U_i = U_{t_i}$. The rest of \S\ref{sec:classicality} will be dedicated to proving:

\begin{theorem}\label{thm:Q-classicality}
	Let $\phi$ be as in Definition \ref{def:Q non-critical}. Suppose $\phi$ is \emph{$Q$-non-critical slope} in the sense that
	\[
	h_i \defeq v_p\big[\phi(U_{i})\big] < h^{\mathrm{crit}}(t_i,\alpha_i,\lambda)
	\]
	for all $i = 1,...,m$. Then $\phi$ is $Q$-non-critical.
\end{theorem}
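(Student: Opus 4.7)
The plan is to proceed by induction along the chain $Q = P_0 \subset P_1 \subset \cdots \subset P_m = G$, reducing Theorem \ref{thm:Q-classicality} to $m$ applications of a one-step classicality: under the slope bound at index $i$, the natural surjection $\D_\lambda^{P_{i-1}} \twoheadrightarrow \D_\lambda^{P_i}$ (dual to the inclusion $\A_\lambda^{P_i} \hookrightarrow \A_\lambda^{P_{i-1}}$ coming from ``more algebraic on the Levi'') induces an isomorphism on $\phi$-eigenspaces of compactly supported cohomology. Composing these $m$ isomorphisms gives
\[
\hc{\bullet}(S_K,\D_\lambda^Q)_\phi \isorightarrow \hc{\bullet}(S_K,\D_\lambda^{P_1})_\phi \isorightarrow \cdots \isorightarrow \hc{\bullet}(S_K,\D_\lambda^G)_\phi = \hc{\bullet}(S_K, V_\lambda^{G,\vee})_\phi,
\]
which is precisely the map $\rho_\lambda$ restricted to $\phi$-eigenspaces, proving $Q$-non-criticality.

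For each one-step result, the central input is a parahoric locally analytic BGG short exact sequence, which I would produce via Corollary \ref{cor:loc an BGG 2}. For the adjacent parabolics $P_{i-1} \subsetneq P_i$, whose relative Weyl group is generated by the single reflection $w_{\alpha_i}$, this resolution should collapse to a short exact sequence of $\Sigma_{P_{i-1}}$-modules
\[
0 \longrightarrow \D_{w_{\alpha_i}*\lambda}^{P_i} \longrightarrow \D_{\lambda}^{P_{i-1}} \longrightarrow \D_{\lambda}^{P_i} \longrightarrow 0,
\]
obtained by dualising the exact sequence identifying the cokernel of $\A_{\lambda}^{P_i} \hookrightarrow \A_{\lambda}^{P_{i-1}}$ with $\A_{w_{\alpha_i}*\lambda}^{P_i}$ via a Lepowsky/Jones--Urban-style differential operator in the $\alpha_i$-direction. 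Taking the long exact sequence in compactly supported cohomology reduces the one-step claim to the vanishing of $\hc{\bullet}(S_K,\D_{w_{\alpha_i}*\lambda}^{P_i})_\phi$ under the hypothesis $v_p(\phi(U_i)) < h^{\mathrm{crit}}(t_i,\alpha_i,\lambda)$.

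The vanishing is the numerical heart of the argument, and the critical slope $h^{\mathrm{crit}}$ appears through a character twist. The BGG differential is naturally $G(\Qp)$-equivariant for the \emph{algebraic} $\cdot$-action, whereas Hecke operators are defined using the integral $*$-action, and the two differ on weight-$\mu$ distributions by the scalar $\mu(\sigma(t)t^{-1})$ (see Remark \ref{rem:dot action}(ii) and \eqref{eq:modify}). Comparing these scalars between $\mu = \lambda$ and $\mu = w_{\alpha_i}*\lambda$, the kernel embedding $\D_{w_{\alpha_i}*\lambda}^{P_i} \hookrightarrow \D_\lambda^{P_{i-1}}$ is $U_{t_i}$-equivariant only up to a factor of $t_i^{\lambda - w_{\alpha_i}*\lambda}$ (times a unit in $T(\Z_p)$). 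By definition $v_p(t_i^{w_{\alpha_i}*\lambda - \lambda}) = h^{\mathrm{crit}}(t_i,\alpha_i,\lambda)$, so \eqref{eq:scaled slope} shows that the image of $\hc{\bullet}(S_K,\D_{w_{\alpha_i}*\lambda}^{P_i})$ inside $\hc{\bullet}(S_K,\D_\lambda^{P_{i-1}})$ is supported in $U_i$-slope $\geq h^{\mathrm{crit}}(t_i,\alpha_i,\lambda)$, hence carries no $\phi$-component.

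The main obstacle I have finessed is the actual construction of this parahoric BGG short exact sequence together with the precise $\Sigma_{P_{i-1}}$-equivariance (and twist) claim; this is the content of \S\ref{sec:usual BGG}--\ref{sec:la BGG}, and will require combining Lepowsky's algebraic BGG for the pair $(L_{P_{i-1}}, L_{P_i})$ (with its relative Weyl shift $w_{\alpha_i}*\lambda$) with the locally analytic induction from $L_{P_i}$ to $J_{P_i}$, checking carefully that the $N_{P_i}$-direction induction is exact and tracking the $\cdot$- vs. $*$-action discrepancy through the induction functor. Once this input is established, the rest of the proof is formal: Proposition \ref{prop:slopes} supplies the slope-$\leq h$ decompositions needed to localise at $\phi$, and the slope-twist identity \eqref{eq:scaled slope} together with the inductive chain yields Theorem \ref{thm:Q-classicality}.
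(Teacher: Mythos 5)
Your strategy matches the paper's: induct along the chain $Q = P_0 \subset \cdots \subset P_m = G$, reduce to a one-step lifting theorem via Corollary \ref{cor:loc an BGG 2}, and kill the $\phi$-part of the kernel using the $h^{\mathrm{crit}}$-twist in the $*$-equivariance of $\Theta_{\alpha_i}$ (cf.\ \eqref{eq:transform}). However, the BGG short exact sequence you write down is mis-stated. Corollary \ref{cor:loc an BGG 2} with $P = P_{i-1}$, $\beta = \alpha_i$ gives $0 \to \A^{P_i}_{\lambda,0} \to \A^{P_{i-1}}_{\lambda,0} \xrightarrow{\Theta_{\alpha_i}} \A^{P_{i-1}}_{w_{\alpha_i}*\lambda,0}$, with the target carrying the superscript $P_{i-1}$, not $P_i$ as you wrote; indeed $\A^{P_i}_{w_{\alpha_i}*\lambda,0}$ vanishes since $w_{\alpha_i}*\lambda$ is not $L_{P_i}$-dominant, so a literal reading of your sequence would give an unconditional isomorphism, which is false. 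Even with the superscript corrected, $\Theta_{\alpha_i}$ need not be surjective and its dual need not be injective; what one actually uses is $0 \to D^{P_{i-1}} \to \D^{P_{i-1}}_{\lambda,0} \to \D^{P_i}_{\lambda,0} \to 0$ where $D^{P_{i-1}} = \D^{P_{i-1}}_{w_{\alpha_i}*\lambda,0}/\ker(\Theta_{\alpha_i}^\vee)$, as in the paper. This is fixable but not what you asserted.

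The more substantive gap is in the slope decompositions. You invoke Proposition \ref{prop:slopes} to supply $U_i$-slope decompositions of $\hc{\bullet}(S_K,\D^{P_{i-1}}_\lambda)$, but that proposition requires $t_i \in T^{++}_{P_{i-1}}$, whereas the theorem's hypotheses impose only $v_p(\alpha_i(t_i)) < 0$, saying nothing about $\alpha_j(t_i)$ for $j > i$. In the $\GL_n$ example $t_i = \mathrm{diag}(1,\dots,1,p,\dots,p)$ has $v_p(\alpha_j(t_i)) = 0$ for $j > i$, so $U_i$ does not act compactly on $\D^{P_{i-1}}_\lambda$ and its slope decompositions are not given by that proposition. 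The paper's proof of Lemma \ref{prop:PQ classicality} resolves this by first taking slope $\leq h_{\mathrm{aux}}$ decompositions for the genuinely compact operator $U_{\mathrm{aux}} = U_{t_1\cdots t_m}$, which \emph{is} $P_{i-1}$-controlling for every $i$, landing in finite-dimensional Hecke-stable spaces on which $U_i$-decompositions then exist. Without this auxiliary step your $\phi$-localisations are not meaningful. Relatedly, your final vanishing argument silently assumes the $U_i$-slopes on the $D^{P_{i-1}}$-side are $\geq 0$; this requires the $*$-stable integral structures of \S\ref{sec:integral} and \cite[Lem.\ 9.1]{BW_CJM}, and should be flagged as an input rather than left implicit.
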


\begin{examples}\label{ex:classicality}
	\begin{itemize}\setlength{\itemsep}{0pt}
		\item Let $G = \GL_n$, with root system $A_{n-1}$ and simple roots $\Delta = \{\alpha_1,...,\alpha_{n-1}\}$. We get a chain of parabolics $P_i$ corresponding to $\varnothing \subset \{\alpha_1\} \subset \cdots \subset \{\alpha_1,...,\alpha_{n-1}\} = \Delta$ : precisely,  $P_{0} = B$ and if $i > 0$, then $P_i$ is the parabolic with Levi $\GL_{i+1} \times \GL_1^{n-i-1}$. We may take $t_i = \mathrm{diag}(1,...,1,p,...,p)$, with $p$'s in the last $n-i$ entries; then for $\lambda = (\lambda_1,...,\lambda_n)$, we have $h^{\mathrm{crit}}(t_i,\alpha_i,\lambda) = \lambda_i - \lambda_{i+1} + 1$. Thus a $p$-refined automorphic representation $\tilde\pi$ of $\GL_n(\A)$ with $U_i$-eigenvalues $A_i$ is $B$-non-critical if $v_p(A_i) < \lambda_i - \lambda_{i+1}+ 1$ for $i = 1,..,n-1$. If we just use the usual $U_p$-operator corresponding to $\mathrm{diag}(1,p,...,p^{n-2},p^{n-1})$, with eigenvalue $A$, then the small slope criterion is (the much more restrictive) $v_p(A) < \mathrm{min}_i(\lambda_i - \lambda_{i+1} + 1)$.
		
		\item In the same set-up, if $Q_j$ is the maximal standard parabolic with $\Delta_{Q_j} = \Delta \backslash \{\alpha_j\}$, then $\tilde\pi$ is $Q_j$-non-critical slope if $v_p(A_j) < h^{\mathrm{crit}}(t_j,\alpha_j,\lambda) = \lambda_j - \lambda_{j+1} + 1$.

		\item Let $G = \mathrm{GSp}_4$, with root system a generalised form of $C_2$ (with an additional basis vector $e_3$ for the character space; see \cite[\S2.3]{RS07} for more details). Let $\f$ be a Siegel eigenform with weight $\lambda = (k_1+3,k_2+3)$ with $k_1 \geq k_2 \geq 0$; then $\f$ is cohomological. This corresponds to the character $k_1 e_1 + k_2 e_2 + 0e_3$. The simple roots are $\alpha_1 = e_1 - e_2$ and $\alpha_2 = 2e_2- e_3$, with coroots $\alpha_1^\vee = e_1^\vee - e_2^\vee$ and $\alpha_2^\vee = e_2^\vee$. There are thus two non-minimal parabolics:
		\begin{itemize}
			\item[--] The \emph{Siegel} parabolic corresponds to $\{\alpha_1\}$. Letting $t^{\mathrm{Sie}} \defeq e_3^\vee(p) \in T(\Qp)$, we see $v_p(\alpha_1(t^{\mathrm{Sie}})) = 0$ and $v_p(\alpha_2(t^{\mathrm{Sie}})) = -1$, and we get a Siegel-controlling operator $U_p^{\mathrm{Sie}} \defeq U_{t^{\mathrm{Sie}}}$. Let $A_p^{\mathrm{Sie}}$ be the $U_p^{\mathrm{Sie}}$-eigenvalue. Then $\f$ is Siegel-non-critical slope if $v_p(A_p^{\mathrm{Sie}}) < -\langle \lambda, \alpha_2^\vee\rangle\cdot -1 = k_2 + 1$.
			\item[--] The \emph{Klingen} parabolic corresponds to $\{\alpha_2\}$. Letting $t^{\mathrm{Kli}} \defeq (e_2^\vee + 2e_3^\vee)(p) \in T(\Qp)$, we get $v_p(\alpha_1(t^{\mathrm{Kli}})) = -1$ and $v_p(\alpha_2(t^{\mathrm{Kli}})) = 0$; define $U_p^{\mathrm{Kli}} = U_{t^{\mathrm{Kli}}}$, with $\cF$-eigenvalue $A_{p}^{\mathrm{Kli}}$. Then $\f$ is Klingen-non-critical slope if $v_p(A_p^{\mathrm{Kli}}) < -\langle \lambda, \alpha_1^\vee\rangle \cdot -1 = k_1 - k_2 + 1$.
		\end{itemize}
		We may identify the torus in $G$ with a subgroup of the diagonal matrices in $\GL_4$, after which $t^{\mathrm{Sie}}$ is the matrix $\mathrm{diag}(1,1,p,p)$ and $t^{\mathrm{Kli}}$ is $\mathrm{diag}(1,p,p,p^2)$.
	\end{itemize}
\end{examples}

\begin{remark}\label{rem:both non-critical}
	Our definition of $Q$-non-critical uses cohomology with compact support $\hc{\bullet}$; to be more precise, we could call this \emph{$Q$-non-critical for $\hc{\bullet}$}. It is also common to use Betti cohomology (without support) $\h^\bullet$, as in for example \cite{Urb11,Han17}, giving a (directly analogous) notion of \emph{$Q$-non-critical for $\h^\bullet$}. It seems natural to expect that the two notions are equivalent, but it does not \emph{a priori} appear obvious that this is the case. However, Theorem \ref{thm:Q-classicality} applies equally well to both cases: so $Q$-non-critical slope implies both flavours of $Q$-non-criticality. Henceforth, unless specified otherwise, our notion of non-critical should be clear from the underlying setting.
\end{remark}

\subsection{Analytic BGG for the Borel}\label{sec:usual BGG}
We recap the usual locally analytic BGG resolution (Theorem \ref{thm:urban jones}). Recall $\cA(J_B,L)$ is the space of locally $L$-analytic functions on the Iwahori $J_B$, and $\cA_{\lambda}^G \subset \cA(J_B,L)$. We have a left action of $J_B$ on $\cA(J_B,L)$ by $l(h)\cdot f(g) = f(h^{-1}g)$. This action is $L$-analytic, and thus induces an analytic action of $\fg$. Explicitly, $X \in \fg$ acts by
\[
	l(X)\cdot f(g) = \tfrac{d}{dt}\left(\big[l(\exp(-tX))\cdot f\big](g)\right)\big|_{t=0}.
\]
This extends in a natural way to an action of the universal enveloping algebra $\fU(\fg)$.	 

By \cite[Prop.\ 3.2.11]{Urb11}, for each simple root $\alpha \in \Delta$, the map $f \mapsto l(X_\alpha)\cdot f$ induces a map $\cA_{\lambda}^G \to \cA_{\lambda - \alpha}^G$. By \eqref{eq:weyl action on weights}, $w_\alpha * \lambda = \lambda - [\langle \lambda, \alpha^\vee\rangle + 1]\alpha$ for a classical weight $\lambda$, and we have a map
\begin{align*}
\Theta_\alpha: \cA_{\lambda}^G &\longrightarrow \Ara^G\\
f &\longmapsto l\big(X_\alpha^{\langle\lambda, \alpha^\vee\rangle + 1}\big)\cdot f.
\end{align*}
This is $J_B$-equivariant and (recalling $\zeta$ from \eqref{eq:splitting}) transforms under $t \in T^+$ as
\begin{equation}\label{eq:transform}
\Theta_\alpha(t*f) = \alpha(\zeta(t))^{-\langle \lambda, \alpha^\vee\rangle -1}\big[t * \Theta_\alpha(f)\big].
\end{equation}
The following describes the first few terms of the locally analytic BGG resolution. Let $V_{\lambda,\mathrm{loc}}^G \subset \cA_{\lambda}^G$ be the subspace of functions that are locally $L$-algebraic on $J_B$, that is, the union of the subspaces of $s$-algebraic functions over all $s \geq 0$.

\begin{theorem}\emph{\cite[Thm.\ 26]{Jon11}, \cite[Prop.\ 3.2.12]{Urb11}.} \label{thm:urban jones}
	Let $\lambda$ be a classical weight. There is an exact sequence
	\[
		0 \to V_{\lambda,\mathrm{loc}}^G \longrightarrow \cA_{\lambda}^G \xrightarrow{\ \oplus \Theta_\alpha\ } \bigoplus_{\alpha \in \Delta} \Ara^G.
	\]
\end{theorem}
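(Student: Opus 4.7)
The plan is to establish exactness at each of the three terms separately, reducing the analytic statement to a combination of the classical (algebraic) BGG resolution and a local analysis on $N(\Z_p)$.

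Exactness at $V_{\lambda,\mathrm{loc}}^G$ is automatic, as this module is by definition a subspace of $\cA_\lambda^G$. For the composition to vanish, I would invoke the classical fact about highest-weight modules: in $V_\lambda^G$ the operator $X_\alpha^{\langle\lambda,\alpha^\vee\rangle+1}$ acts as zero, since $V_\lambda^G$ is the irreducible quotient of the Verma module $M(\lambda)$ by the submodule generated by $X_{-\alpha}^{n_\alpha+1}v_\lambda$ (with $n_\alpha \defeq \langle\lambda,\alpha^\vee\rangle$); equivalently, $V_\lambda^{G,\vee}$ embeds in the dual Verma and is annihilated by $X_\alpha^{n_\alpha+1}$. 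Since $\Theta_\alpha$ is given by a left-invariant differential operator and any $f \in V_{\lambda,\mathrm{loc}}^G$ agrees on each sufficiently small ball with an element of $V_\lambda^G$, this vanishing is inherited locally, so $\Theta_\alpha(f) = 0$.

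The substantive content is exactness in the middle: given $f \in \cA_\lambda^G$ with $\Theta_\alpha(f) = 0$ for every $\alpha \in \Delta$, show $f$ is locally algebraic. Using the Iwahori decomposition $J_B = J_B^- \cdot T(\Z_p) \cdot N(\Z_p)$ together with the transformation rule $f(n^- t g) = \lambda(t) f(g)$, it suffices to prove $f|_{N(\Z_p)}$ is locally algebraic. Fix a point $n_0 \in N(\Z_p)$ and an $s$ large enough that $f$ is analytic on the coset $n_0 N^s(\Z_p)$; after left-translating we may take $n_0 = 1$. Using the coordinates $\{x_\beta : \beta \in R^+\}$ from the proof of Proposition \ref{prop:valuation T++}, we expand $f$ as a convergent power series. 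Unpacking $\exp(tX_\alpha) \cdot n = n^-(t,n)\tau(t,n) n'(t,n)$ via the Iwahori decomposition and differentiating in $t$ expresses $l(X_\alpha)$ as an explicit first-order differential operator in the $x_\beta$ whose leading term is $\partial/\partial x_\alpha$. A direct computation then shows that the vanishing $l(X_\alpha^{n_\alpha+1}) \cdot f = 0$ forces $f$, restricted to $N^s$, to have no monomial terms of degree exceeding $n_\alpha$ in $x_\alpha$. Applying this for every $\alpha \in \Delta$ cuts down the power series to polynomial dependence in all the simple-root coordinates; a descending induction on the height of roots, using commutators of the $l(X_\alpha)$ to access the non-simple directions $l(X_\beta)$ for $\beta \in R^+ \setminus \Delta$, upgrades this to $f$ being polynomial in every $x_\beta$. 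Thus $f|_{n_0 N^s}$ is algebraic, whence $f \in V_{\lambda,\mathrm{loc}}^G$.

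The main obstacle is this final step: the explicit translation of the abstract Lie-algebra operators $l(X_\alpha^{n_\alpha+1})$ into differential operators on the power-series coordinates, and verifying that their simultaneous kernel consists exactly of polynomials of bounded degree. The action of $X_\alpha$ is not literally $\partial/\partial x_\alpha$ because of correction terms produced by the Iwahori decomposition, so a careful filtration argument by root-height is required to handle the non-simple directions cleanly. Since this analysis has been carried out in the split case in \cite{Jon11} and extended to the quasi-split setting in \cite[Prop.~3.2.12]{Urb11}, in practice we would simply appeal to their calculation rather than reproduce it.
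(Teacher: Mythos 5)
The paper does not prove this theorem: it is imported verbatim from \cite[Thm.\ 26]{Jon11} and \cite[Prop.\ 3.2.12]{Urb11}, and you end your proposal by deferring to the same sources, which is exactly what the paper does. So your overall strategy matches. Your sketch of what those references do is broadly in the right spirit, but it contains one genuine mathematical error that is worth correcting.

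You justify $\Theta_\alpha(V_{\lambda,\mathrm{loc}}^G)=0$ by claiming that $X_\alpha^{\langle\lambda,\alpha^\vee\rangle+1}$ acts as zero on $V_\lambda^G$, deducing this from the Verma presentation. This is false in general: only the highest-weight vector of $M(\lambda)$ is annihilated by $X_{-\alpha}^{n_\alpha+1}$, and the nilpotency index of $X_\alpha$ on all of $V_\lambda$ is governed by the longest $\alpha$-string of weights in $V_\lambda$, which typically exceeds $n_\alpha+1$. A concrete counterexample: take $G=\GL_3$, $\lambda=(1,0,0)$, $\alpha=\alpha_2$; then $\langle\lambda,\alpha_2^\vee\rangle=0$ so $n_{\alpha_2}+1=1$, yet $X_{\alpha_2}=E_{23}$ is plainly nonzero on the standard representation. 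Moreover the operator $l(X_\alpha)$ in $\Theta_\alpha$ is the \emph{left}-regular action, which is not the $\fg$-module action on $V_\lambda^G$ (that is by right translation), so the Verma-quotient fact is not even about the right operator. The correct and simpler argument is this: $l$ commutes with right translation, so $\Theta_\alpha$ is a $G$-equivariant map of algebraic induced modules $V_\lambda^G\to V_{w_\alpha*\lambda}^G$; since $\lambda$ is dominant, $w_\alpha*\lambda$ never is (cf.\ the example at the end of \S2.2), hence $V_{w_\alpha*\lambda}^G=0$ and $\Theta_\alpha$ kills $V_\lambda^G$, and by locality kills $V_{\lambda,\mathrm{loc}}^G$. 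The rest of your sketch of middle exactness — restricting to $N(\Z_p)$, expanding in root coordinates, and a filtration by root height to handle the non-simple directions — accurately describes the shape of the Jones/Urban computation, and deferring to them for the details is appropriate here.
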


The action of $\fg$ on $\cA(J_B,L)$ preserves $\cA_0(J_B,L)$ (as we can define it on this space directly). Hence we have maps $X_\alpha : \cA_{\lambda,0}^G \to \cA_{\lambda-\alpha,0}^G$ and $\Theta_\alpha: \cA_{\lambda,0}^G \to \cA_{w_\alpha * \lambda,0}^G$.

\begin{corollary} \label{cor:urban jones}
	Let $\lambda$ be a classical weight. There is an exact sequence
	\[
		0 \to V_{\lambda}^G \longrightarrow \cA_{\lambda,0}^G \xrightarrow{\ \oplus \Theta_\alpha\ } \bigoplus_{\alpha \in \Delta} \cA_{w_\alpha * \lambda,0}^G.
	\]
\end{corollary}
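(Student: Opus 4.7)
\medskip

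\noindent\textbf{Proof proposal.} The plan is to deduce the corollary directly from Theorem \ref{thm:urban jones} by restricting that exact sequence to the subspace of $0$-analytic functions. The two things to verify are (a) that each $\Theta_\alpha$ sends $\cA_{\lambda,0}^G$ into $\cA_{w_\alpha*\lambda,0}^G$, which is already noted in the excerpt (the action of $\fg$ preserves the analyticity radius, since $l(X_\alpha)$ is a differential operator with polynomial coefficients in the unipotent coordinates), and (b) that under the inclusion $\cA_{\lambda,0}^G \subset \cA_\lambda^G$, the restriction of the kernel $V_{\lambda,\mathrm{loc}}^G$ of $\oplus\Theta_\alpha$ is precisely $V_\lambda^G$. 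Injectivity of $V_\lambda^G \hookrightarrow \cA_{\lambda,0}^G$ and the inclusion $V_\lambda^G \subset V_{\lambda,\mathrm{loc}}^G \cap \cA_{\lambda,0}^G$ are immediate from the definitions and from the identification of $V_\lambda^G$ with the subspace of algebraic functions in $\cA_{\lambda,0}^G$ recalled in \S\ref{sec:algebraic induction}.

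The content is therefore the reverse inclusion $V_{\lambda,\mathrm{loc}}^G \cap \cA_{\lambda,0}^G \subset V_\lambda^G$, and this is the step I would expect to require the most care. Let $f$ lie in this intersection. Any such $f$ is determined by its restriction to $N(\Z_p)$, which by hypothesis is simultaneously analytic on the whole of $N(\Z_p)$ and $s$-algebraic for some $s\geq 0$, i.e.\ given by a polynomial on each coset of $N^s(\Z_p)$. Identify $N(\Z_p)$ with $\Z_p^t$ via the coordinates $\{x_\beta\}_{\beta\in R^+}$ from \S\ref{sec:local notation}. The global analyticity yields a single convergent power series expansion of $f$ around $0$; its restriction to the $N^s$-coset $p^s\Z_p^t$ must coincide, by uniqueness of Taylor expansions on a subdisc, with the polynomial that represents $f|_{p^s\Z_p^t}$. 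Hence only finitely many Taylor coefficients are nonzero, and $f$ is a polynomial on all of $N(\Z_p)$. This is just the $p$-adic identity principle: an analytic function that agrees with a polynomial on an open subdisc equals that polynomial globally.

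Once $f|_{N(\Z_p)}$ is known to be algebraic, the extension property recalled in the footnote to \S\ref{sec:algebraic induction}, namely that any algebraic function on $J_B$ with the transformation property under $B^-(\Z_p)$ extends uniquely to an algebraic function on $G(\Z_p)$, identifies $f$ with an element of $V_\lambda^G$. Combined with (a), this gives the required exact sequence
\[
  0 \to V_\lambda^G \longrightarrow \cA_{\lambda,0}^G \xrightarrow{\ \oplus\Theta_\alpha\ } \bigoplus_{\alpha\in\Delta}\cA_{w_\alpha*\lambda,0}^G,
\]
so the corollary follows. The only genuine analytic input beyond Theorem \ref{thm:urban jones} is the identity principle in the previous paragraph; all other parts are formal consequences of the definitions and of the fact that $\fg$ acts through operators that preserve the filtration $\{\cA_{\bullet,s}\}_{s\geq 0}$.
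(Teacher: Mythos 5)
Your proof is correct and follows the same route as the paper: restrict the locally analytic exact sequence of Theorem \ref{thm:urban jones} to the $0$-analytic level, note $\Theta_\alpha$ preserves $0$-analyticity, and reduce to the equality $V_{\lambda,\mathrm{loc}}^G \cap \cA_{\lambda,0}^G = V_\lambda^G$. The only difference is that where the paper simply cites \cite[\S3.2.9]{Urb11} for that equality, you supply a short self-contained argument via the $p$-adic identity principle (a globally analytic function agreeing with a polynomial on a subdisc is that polynomial), which is a valid and arguably cleaner way to close that step.
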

\begin{proof}
	Since $V_{\lambda}^G \subset V_{\lambda,\mathrm{loc}}^G$, it is a subset of $\ker(\oplus \Theta_\alpha)$. Conversely, if $f \in \ker(\oplus \Theta_\alpha)$, then by Theorem \ref{thm:urban jones} it lies in $V_{\lambda,\mathrm{loc}}^G \cap \cA_{\lambda,0}^G = V_{\lambda}^G$ (see \cite[\S3.2.9]{Urb11}). 
\end{proof}

\subsection{Theta operators on parahoric distributions}\label{sec:theta operators}
We now describe $\A_{\lambda,0}^Q$ as a canonical subspace of $\cA_{\lambda,0}^G$. If $f \in \A_{\lambda,0}^Q$ and $n \in N_Q(\Zp)$, then by definition $[f(n) : L_Q(\Zp) \to L] \in V_{\lambda}^{L_Q}$.
	 
	\begin{proposition}\label{prop:subspace}
		There is an injective $\Sigma_B$-equivariant map $\iota_Q: \A_{\lambda,0}^Q \hookrightarrow \cA_{\lambda,0}^G$ defined by
		\[
		\iota_Q(f) :J_B \longrightarrow L, \hspace{20pt} \iota_Q(f)(g) \defeq f(g)(\mathrm{id}_{L_Q}).
		\]
	\end{proposition}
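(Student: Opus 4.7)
The plan is to verify three things in turn: that $\iota_Q(f)$ really lies in $\cA_{\lambda,0}^G$ (not just that the formula makes sense); that the map is injective; and that it is $\Sigma_B$-equivariant. The main work is in well-definedness and injectivity, both of which exploit the decomposition $N = N_Q \cdot (N \cap L_Q)$ combined with the algebraic structure of $V_\lambda^{L_Q}$.

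For well-definedness, note first that $f : J_Q \to V_\lambda^{L_Q}$ is $0$-analytic and evaluation at $\mathrm{id}_{L_Q}$ is a continuous $L$-linear functional on the finite-dimensional Banach space $V_\lambda^{L_Q}$, so the composite is $0$-analytic on $J_Q$, hence in particular on $J_B \subset J_Q$. It remains to check the transformation law $\iota_Q(f)(n^- t g) = \lambda(t) \iota_Q(f)(g)$ for $n^- \in N^-(\Zp)$, $t \in T(\Zp)$. Decompose $n^- = n_Q^- n_L^-$ with $n_Q^- \in N_Q^-(\Zp)$ and $n_L^- \in (N^- \cap L_Q)(\Zp)$; both factors lie in $Q^-(\Zp) \cap J_Q$, so the parahoric transformation law \eqref{eq:parahoric transform} gives $f(n^- t g) = n_Q^- n_L^- t \cdot f(g)$, where $n_Q^-$ acts trivially on $V_\lambda^{L_Q}$ by convention. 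Evaluating at $\mathrm{id}_{L_Q}$ and using that $V_\lambda^{L_Q}$ is right-translation of functions satisfying $h(n_L^- t \cdot -) = \lambda(t) h(-)$, one obtains $\iota_Q(f)(n^- t g) = \lambda(t) f(g)(\mathrm{id}_{L_Q})$, as required.

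For injectivity, suppose $\iota_Q(f) = 0$. Since any $h \in \cA_{\lambda,0}^G$ is determined by $h|_{N(\Zp)}$ and any $f \in \A_{\lambda,0}^Q$ by $f|_{N_Q(\Zp)}$, it suffices to show $f(n_1) = 0$ for all $n_1 \in N_Q(\Zp)$. Using $N(\Zp) = N_Q(\Zp) \cdot (N \cap L_Q)(\Zp)$, write a general $n \in N(\Zp)$ as $n = n_1 n_2$. Rewriting this as $n = n_2 (n_2^{-1} n_1 n_2) = n_2 n_1'$ (using that $L_Q$ normalises $N_Q$, so $n_1' \in N_Q(\Zp)$) gives the parahoric decomposition, and the transformation law yields $f(n)(\mathrm{id}_{L_Q}) = (n_2 \cdot f(n_1'))(\mathrm{id}_{L_Q}) = f(n_1')(n_2)$. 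Vanishing of $\iota_Q(f)|_{N(\Zp)}$ thus forces $f(n_1')(n_2) = 0$ for all $n_2 \in (N \cap L_Q)(\Zp)$; hence the algebraic function $f(n_1') \in V_\lambda^{L_Q}$ vanishes on the unipotent radical $N_{L_Q}(\Zp)$. Since $V_\lambda^{L_Q}$ consists of functions determined (via the induction property) by their restriction to $N_{L_Q}$ on the open big cell, Zariski density gives $f(n_1') = 0$. As $n_1 \mapsto n_2^{-1} n_1 n_2$ is a bijection on $N_Q(\Zp)$ for each fixed $n_2$, we conclude $f|_{N_Q(\Zp)} \equiv 0$, so $f = 0$.

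Finally, $\Sigma_B$-equivariance is immediate from the definitions of the $*$-actions in \S\ref{sec:Sigma action}. For $j \in J_B \subset J_Q$, both actions are by right translation: $\iota_Q(j * f)(g) = f(gj)(\mathrm{id}_{L_Q}) = \iota_Q(f)(gj) = (j * \iota_Q(f))(g)$. For $t \in T^+$ and $b \in B(\Zp)$, using $b * t = \sigma(t) t^{-1} b t \in B(\Zp)$ (Proposition \ref{prop:valuation T++}(iii)), we get $\iota_Q(t * f)(b) = f(b * t)(\mathrm{id}_{L_Q}) = \iota_Q(f)(b * t) = (t * \iota_Q(f))(b)$, and the two sides agree on $J_B$ by unique extension from $B(\Zp)$. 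The hardest step is injectivity: an analytic-continuation argument fails because $J_B$ does not meet every connected component of $J_Q$, so one is forced to exploit the algebraicity of $V_\lambda^{L_Q}$ together with the Levi-decomposition of $N(\Zp)$ rather than any density-type argument on $J_Q$ itself.
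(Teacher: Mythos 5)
Your proof is correct and follows essentially the same approach as the paper: both verify the transformation law via the decomposition of $n^-\in J_B^-$ into $N_Q^-(\Zp)$- and $(N^-\cap L_Q)(\Zp)$-factors, and both check $\Sigma_B$-equivariance by the same direct computation on $J_B$ and $B(\Zp)$. Your injectivity argument spells out in full (via $N(\Zp)=N_Q(\Zp)\cdot N_{L_Q}(\Zp)$ and Zariski density of $N_{L_Q}$) a step the paper asserts in a terse one-liner, and your closing remark about why one cannot just argue by density is apt — though since $J_Q$ is totally disconnected, the precise point is that $J_B$ is a proper clopen (hence non-dense) subgroup of $J_Q$, rather than anything about connected components.
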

\begin{proof}
	Note $\iota_Q(f)$ is analytic since $f$ is.
	Let $t \in T(\Zp)$ and $n^-\in J_B^-$, and write $n^- = n_Q^-\ell^-$ with $n_Q^- \in J_Q^-$ and $\ell^- \in L_Q(\Zp)\cap J_B^-$. Using \eqref{eq:parahoric transform} for $f$ and the $L_Q$-action on $V_{\lambda}^{L_Q}$, we have
	\begin{align*}
		\iota_Q(f)(n^-tg) = f(n_Q^-\ell^-tg)(\mathrm{id}_{L_Q}) &= \left[\langle \ell^-t\rangle_\lambda f(g)\right](\mathrm{id}_{L_Q})\\
		 &= f(g)(\ell^-t) = \lambda(t)f(g)(\mathrm{id}_{L_Q}) = \lambda(t)\iota_Q(f)(g),
	\end{align*}
	so $\iota_Q(f)$ has the right transformation property and $\iota_Q$ is well-defined. Since $f$ is uniquely determined by its restriction to $N_Q(\Zp) \subset J_B$, the map is injective. The map is $J_B$-equivariant since if $j \in J_B$, then $\iota_Q(j*f)(g) = f(gj)(\mathrm{id}_{L_Q}) = \iota_Q(f)(gj) = j*\iota_Q(f)$. If $t \in T^+$, for $b \in B(\Zp)$ similarly $\iota_Q(t*f)(b) = f(\sigma(t)t^{-1}bt)(\mathrm{id}_{L_Q}) = \iota_Q(f)(\sigma(t)t^{-1}bt) = (t*\iota_Q(f))(b)$, so $\iota_Q$ is also $T^+$-equivariant and hence $\Sigma_B$-equivariant.
\end{proof}

From now on, we freely identify $\A_{\lambda,0}^Q$ with its image $\iota_Q(\A_{\lambda,0}^Q)$ in $\cA_{\lambda,0}^G$. We can give an intrinsic criterion for an element of $\cA_{\lambda,0}^G$ to be in this subset.
\begin{definition}
	Let $n \in N_Q(\Zp)$. Define a map
	\[
		\cR_n : \cA_{\lambda,0}^G \longrightarrow \cA_{\lambda,0}^{L_Q},
	\]
 where $\cR_n(f) : L_Q(\Zp) \cap J_B \rightarrow L$ is defined by $\ell \mapsto f(\ell n)$
	(noting that $L_Q(\Zp)\cap J_B$ is the Iwahori subgroup in $L_Q(\Zp)$). Alternatively, $\cR_n(f)$ is the restriction of $(n*f)$ to $L_Q(\Zp)\cap J_B$.
\end{definition}

\begin{proposition}\label{prop:Rn}
	Let $f \in \cA_{\lambda,0}^G$. Then $f \in \A_{\lambda,0}^Q$ if and only if $\cR_n(f) \in V_{\lambda}^{L_Q}$ for all $n \in N_Q(\zp)$, that is, for all $n$ we have 
	\[
	\xymatrix@R=5mm@C=7mm{
		V_{\lambda}^G \sar{r}{\subset}& \A_{\lambda,0}^Q \sar{r}{\subset}\ar[d]^{\cR_n} & \cA_{\lambda,0}^G \ar[d]^{\cR_n}\\
		& V_{\lambda}^{L_Q} \sar{r}{\subset}& \cA_{\lambda,0}^{L_Q}.
	}
	\]
\end{proposition}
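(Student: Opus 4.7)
The plan is to exhibit $\A_{\lambda,0}^Q$ (via $\iota_Q$) as exactly the subspace of $\cA_{\lambda,0}^G$ cut out by the pointwise conditions $\cR_n(f) \in V_\lambda^{L_Q}$, by constructing an explicit inverse to the embedding $\iota_Q$ of Proposition \ref{prop:subspace}.

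For the forward direction, I would take $F \in \A_{\lambda,0}^Q$ and write $f = \iota_Q(F)$. For any $n \in N_Q(\Zp)$ and $\ell \in (L_Q \cap J_B)(\Zp) \subset Q^-(\Zp) \cap J_Q$, unwinding the definitions and applying \eqref{eq:parahoric transform} to $F$ yields
\[
\cR_n(f)(\ell) = F(\ell n)(\mathrm{id}_{L_Q}) = [\langle \ell \rangle_\lambda F(n)](\mathrm{id}_{L_Q}) = F(n)(\ell).
\]
Since $F(n) \in V_\lambda^{L_Q}$ is algebraic and, by \S\ref{sec:algebraic induction}, uniquely determined by its restriction to the $L_Q$-Iwahori, this identifies $\cR_n(f)$ with $F(n) \in V_\lambda^{L_Q}$, as required.

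For the converse, given $f \in \cA_{\lambda,0}^G$ satisfying the $\cR_n$-condition, I would build the candidate preimage $\tilde f : J_Q \to V_\lambda^{L_Q}$ by setting $\tilde f(n) := \cR_n(f)$ for $n \in N_Q(\Zp)$ and extending to $J_Q$ by enforcing $\tilde f(n^-\ell n) := \langle \ell \rangle_\lambda \tilde f(n)$ under the parahoric decomposition \eqref{eq:parahoric decomp}. The transformation property \eqref{eq:parahoric transform} then holds by construction, so the remaining steps are (i) to verify analyticity of $\tilde f$, and (ii) to check $\iota_Q(\tilde f) = f$. For (ii) I would decompose any $g \in J_B$ simultaneously in the Iwahori factorisation $g = n_B^- t n_B$ and in the refined form $g = n_Q^- (n_{L_Q}^- t n_{L_Q}) n_Q$, using the variety decompositions $N^- = N_Q^- \cdot N_{L_Q}^-$ and $N = N_{L_Q} \cdot N_Q$; matching the $L_Q$-transformation of $\cR_{n_Q}(f) \in V_\lambda^{L_Q}$ against the $G$-Iwahori transformation of $f$ shows both sides equal $\lambda(t)\, f(n_{L_Q} n_Q)$.

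The main obstacle is step (i): checking analyticity of $n \mapsto \cR_n(f)$ as a map $N_Q(\Zp) \to V_\lambda^{L_Q}$. My plan is to exploit finite-dimensionality: after fixing a basis of $V_\lambda^{L_Q}$, each coefficient of $\cR_n(f)$ in this basis is a continuous linear functional applied to a polynomial on $L_Q$, and such a functional can be written as a finite linear combination of point-evaluations $w \mapsto w(\ell_i)$ at suitably chosen $\ell_i \in (L_Q \cap J_B)(\Zp)$. Each coefficient then becomes $\sum_i \alpha_i f(\ell_i n)$, which is analytic in $n$ since $f \in \cA_0(J_B,L)$. Extending analytically to all of $J_Q$ via \eqref{eq:parahoric decomp} is then immediate, since the parahoric decomposition is an isomorphism of analytic varieties and $\ell \mapsto \langle \ell \rangle_\lambda$ is algebraic in $\ell$. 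This places $\tilde f$ in $\mathrm{LA}_0\mathrm{Ind}_Q V_\lambda^{L_Q} = \A_{\lambda,0}^Q$, completing the argument.
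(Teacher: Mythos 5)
Your proof is correct and takes essentially the same approach as the paper's, which is simply terser: the paper asserts without elaboration that $f'(n) := \cR_n(f)$ defines an element of $\A_{\lambda,0}^Q$ satisfying $\iota_Q(f') = f$. You fill in precisely those omitted details -- in particular the analyticity check, done cleanly via point-evaluation functionals on the finite-dimensional space $V_\lambda^{L_Q}$ -- and your verification that $\iota_Q(\tilde f) = f$ via the compatible Iwahori and parahoric factorisations is correct.
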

\begin{proof}
	If $f' \in \A_{\lambda,0}^Q$, $\ell \in L_Q(\Zp)$ and $n \in N_Q(\Zp)$, then $f'(\ell n)(\mathrm{id}_{L_Q}) = [\langle \ell\rangle_\lambda f'(n)](\mathrm{id}_{L_Q}) = f'(n)(\ell)$. 
		
		Thus if $f = \iota_Q(f')$ for some $f' \in \A_{\lambda,0}^Q$,  then $\cR_n(\iota_Q(f')) = f'(n) \in V_{\lambda}^{L_Q}$. Conversely if $\cR_n(f) \in V_{\lambda}^{L_Q}$ for all $n$, then the function $f' : N_Q(\Zp) \to V_{\lambda}^{L_Q}$ defined by $f'(n) = \cR_n(f)$ defines an element of $\A_{\lambda,0}^{Q}$ and satisfies $\iota_Q(f') = f$.
\end{proof}

\begin{remark}\label{rem:inclusion}
If $P \subset Q$ are two parabolics, as $L_P \subset L_Q$ and $N_Q \subset N_P$ there is an injective extension-by-zero map $V_{\lambda}^{L_P} \hookrightarrow V_{\lambda}^{L_Q}$. We deduce that $\A_{\lambda,0}^Q \subset \A_{\lambda,0}^P$.
\end{remark}
By definition of $\Delta_Q$, if $\alpha \in \Delta_Q$ then $X_\alpha \in \fl_{Q} = \mathrm{Lie}(L_{Q})$, so $\alpha$ is a simple root of $L_{Q}$ and we get a well-defined map $\Theta_\alpha : \cA_{\lambda,0}^{L_Q} \to \cA_{w_\alpha * \lambda,0}^{L_Q}$.

\begin{lemma}\label{lem:theta and R}
	Let $n \in N_Q(\Zp)$. For all $\alpha \in \Delta_Q$, we have a commutative diagram
	\[
		\xymatrix@C=15mm@R=6mm{
				\cA_{\lambda,0}^G \ar[r]^{\Theta_\alpha}\ar[d]^{\cR_n} & \cA_{w_\alpha * \lambda,0}^G\ar[d]\ar[d]^{\cR_n}\\
				\cA_{\lambda,0}^{L_Q} \ar[r]^{\Theta_\alpha} & \cA_{w_\alpha * \lambda,0}^{L_Q}.
			}
	\]
\end{lemma}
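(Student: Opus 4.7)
The plan is to unwind the definition of both $\Theta_\alpha$ and $\cR_n$ and reduce the claim to a compatibility between left translation on $G$ and on the Levi $L_Q$. Concretely, recall that for a classical weight $\lambda$ and $\alpha \in \Delta$ the operator $\Theta_\alpha$ is defined as $\Theta_\alpha(f) = l(X_\alpha^{k})\cdot f$, where $k = \langle \lambda,\alpha^\vee\rangle + 1$ and $l$ is left translation extended to the universal enveloping algebra. The hypothesis $\alpha \in \Delta_Q$ is crucial: it guarantees $X_\alpha \in \fl_Q = \mathrm{Lie}(L_Q)$, so $\alpha$ is also a simple root of $L_Q$, with the same coroot $\alpha^\vee \in X_\bullet(T)$ as in $G$. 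Consequently the integer $k$ in the definition of $\Theta_\alpha$ on $G$ and on $L_Q$ agree, so the two $\Theta_\alpha$ appearing in the diagram are genuinely defined by the same power of the same Lie algebra element.

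The key step is a direct calculation: for $f \in \cA_{\lambda,0}^G$, $n \in N_Q(\Zp)$ and $\ell \in L_Q(\Zp)\cap J_B$, I unwind
\[
\bigl[l(X_\alpha)\cR_n(f)\bigr](\ell) = \tfrac{d}{dt}\bigl[\cR_n(f)\bigl(\exp(-tX_\alpha)\ell\bigr)\bigr]\big|_{t=0} = \tfrac{d}{dt}\bigl[f\bigl(\exp(-tX_\alpha)\ell n\bigr)\bigr]\big|_{t=0} = \bigl[l(X_\alpha) f\bigr](\ell n) = \cR_n\bigl(l(X_\alpha) f\bigr)(\ell).
\]
Here the second equality uses the definition of $\cR_n$ together with the fact that $\exp(-tX_\alpha) \in L_Q$ for small $t$, so that it makes sense to evaluate $\cR_n(f)$ at $\exp(-tX_\alpha)\ell$. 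This identity is the assertion that $\cR_n$ intertwines the $l(X_\alpha)$ actions on $\cA^G_{\lambda,0}$ and $\cA^{L_Q}_{\lambda,0}$; geometrically, $\cR_n$ is a composition of right multiplication by $n$ followed by restriction to $L_Q(\Zp)\cap J_B$, both of which commute with left translation by elements of $L_Q$.

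Iterating this identity $k$ times shows $\cR_n \circ l(X_\alpha^{k}) = l(X_\alpha^{k})\circ \cR_n$ as maps $\cA^G_{\lambda,0} \to \cA^{L_Q}_{w_\alpha * \lambda,0}$, which is precisely the commutativity of the diagram. To make this rigorous I should briefly check that the right-hand map $\cR_n : \cA^G_{w_\alpha*\lambda,0} \to \cA^{L_Q}_{w_\alpha*\lambda,0}$ is well-defined with the stated weight, but this is immediate since $\cR_n$ is defined independently of the weight (it takes the weight of the source to the weight of the target).

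The main obstacle, such as it is, is really just bookkeeping: one must be careful that the Lie algebra action of $\fg$ on $\cA(J_B,L)$ used to define $\Theta_\alpha$ on the left of the diagram restricts correctly to the Lie algebra action of $\fl_Q$ on $\cA(L_Q(\Zp)\cap J_B, L)$ used on the right. This restriction is automatic because both are given by differentiating left translation, and $\exp(-tX_\alpha)$ lies in $L_Q$ when $X_\alpha \in \fl_Q$; there is no hidden factor arising from the $\zeta(t)$-twist of \eqref{eq:transform} because $\Theta_\alpha$ is defined purely via $J_B$-left-translation (equivalently, via the Lie algebra action), whereas $\cR_n$ only involves right translation by an element of $J_B$.
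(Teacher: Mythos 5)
Your proof is correct and follows essentially the same approach as the paper: reduce to showing $\cR_n$ commutes with $l(X_\alpha)$, then verify this by the same one-line derivative computation $[l(X_\alpha)\cR_n(f)](\ell) = \tfrac{d}{dt}f(\exp(-tX_\alpha)\ell n)|_{t=0} = \cR_n(l(X_\alpha)f)(\ell)$. Your additional remarks (that $\langle\lambda,\alpha^\vee\rangle$ is computed the same way for $G$ and $L_Q$, and that no $\zeta$-twist enters because $\Theta_\alpha$ is purely a left-translation operator) are correct and helpfully explicit, though the paper leaves them implicit.
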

\begin{proof}
	It suffices to prove that $\cR_n$ commutes with the action of $X_\alpha$ on $\cA_0(J_B,L)$. But if $f\in \cA_0(J_B,L)$, then for all $\ell \in L_Q(\Zp)\cap J_B$, we have 
	\begin{align*}
		[l(X_\alpha)\cdot \cR_n(f)](\ell) & = \tfrac{d}{dt}\cR_n(f)\big(\exp(-tX_\alpha)\ell\big)|_{t=0}\\
		&= \tfrac{d}{dt}f\big(\exp(-tX_\alpha)\ell n\big)|_{t=0} = [l(X_\alpha)\cdot f](\ell n) = \cR_n(l(X_\alpha)\cdot f)(\ell).\ \ \ \  \qedhere
	\end{align*}
\end{proof}

\begin{lemma}\label{lem:in kernel}
	Suppose $\alpha \in \Delta_Q$. Then  
	$\A_{\lambda,0}^Q \subset \ker(\Theta_\alpha).$
\end{lemma}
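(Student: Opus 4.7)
The plan is to combine the two previous characterisations of $\A_{\lambda,0}^Q \subset \cA_{\lambda,0}^G$: namely, Proposition \ref{prop:Rn} (which tests membership via the maps $\cR_n$) and Lemma \ref{lem:theta and R} (which says $\cR_n$ intertwines the $\Theta_\alpha$ operators on $G$ and on $L_Q$ when $\alpha \in \Delta_Q$). This should reduce the statement to the analogous vanishing for $L_Q$ in place of $G$, which is then immediate from the Borel-level BGG resolution of Corollary \ref{cor:urban jones} applied to the reductive group $L_Q$.

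More precisely, let $f \in \A_{\lambda,0}^Q$, which we view inside $\cA_{\lambda,0}^G$ via $\iota_Q$. Fix $\alpha \in \Delta_Q$; since $\Delta_Q$ is identified with $\Delta_{L_Q}$, $\alpha$ is a simple root of $L_Q$, and Corollary \ref{cor:urban jones} applied to $L_Q$ gives an exact sequence
\[
0 \to V_{\lambda}^{L_Q} \longrightarrow \cA_{\lambda,0}^{L_Q} \xrightarrow{\ \oplus \Theta_{\alpha'}\ } \bigoplus_{\alpha' \in \Delta_Q} \cA_{w_{\alpha'} * \lambda,0}^{L_Q}.
\]
In particular $V_\lambda^{L_Q} \subset \ker(\Theta_\alpha : \cA_{\lambda,0}^{L_Q} \to \cA_{w_\alpha*\lambda,0}^{L_Q})$. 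By Proposition \ref{prop:Rn} we have $\cR_n(f) \in V_\lambda^{L_Q}$ for every $n \in N_Q(\Zp)$, so $\Theta_\alpha(\cR_n(f)) = 0$; and by Lemma \ref{lem:theta and R} this equals $\cR_n(\Theta_\alpha(f))$. Hence $\cR_n(\Theta_\alpha(f)) = 0$ for all $n \in N_Q(\Zp)$, i.e.\ $\Theta_\alpha(f)(\ell n) = 0$ for every $\ell \in L_Q(\Zp) \cap J_B$ and $n \in N_Q(\Zp)$.

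To finish, I would argue that $\Theta_\alpha(f) \in \cA_{w_\alpha * \lambda,0}^G$ is determined by its restriction to $N(\Zp)$ (by the transformation property under $B^-(\Zp)$), and that every element of $N(\Zp)$ factors as a product $\ell n$ with $\ell \in L_Q(\Zp) \cap N(\Zp) \subset L_Q(\Zp) \cap J_B$ and $n \in N_Q(\Zp)$, using the standard Levi decomposition $N = (L_Q \cap N) \cdot N_Q$ of unipotent radicals. Combining these two facts with the vanishing above yields $\Theta_\alpha(f) = 0$ as required.

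There is no serious obstacle here — the lemma is essentially the assertion that the parahoric distributions sit inside the locally analytic ones in exactly the way the BGG resolution predicts, and the entire content is the bookkeeping that lets us transport the BGG vanishing for $L_Q$ up to $G$. The only point demanding a little care is the final reduction to $N(\Zp)$ and the decomposition $N = (L_Q \cap N)N_Q$, but this is a standard consequence of $\Delta_Q \subset \Delta$.
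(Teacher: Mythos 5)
Your proof is correct and shares the two main ingredients (Proposition \ref{prop:Rn} and Lemma \ref{lem:theta and R}) with the paper's proof, but the endgame is genuinely different. The paper never needs the decomposition $N = (L_Q\cap N)\cdot N_Q$: after deducing $\cR_n(\Theta_\alpha(f)) = \Theta_\alpha(\cR_n(f)) \in V_{w_\alpha*\lambda}^{L_Q}$, it applies Proposition \ref{prop:Rn} a second time (in the converse direction) to conclude $\Theta_\alpha(f) \in \A_{w_\alpha*\lambda,0}^Q$, and then simply observes that $w_\alpha*\lambda$ fails to be dominant for $L_Q$, so $V_{w_\alpha*\lambda}^{L_Q} = 0$ and hence $\A_{w_\alpha*\lambda,0}^Q = 0$ by definition. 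You instead invoke the BGG exactness for $L_Q$ (Corollary \ref{cor:urban jones}) to get $\Theta_\alpha(\cR_n(f))=0$ directly, and then recover the vanishing of $\Theta_\alpha(f)$ from its restriction to $N(\Zp)$ via the Levi-unipotent factorisation. Both routes work; the paper's is marginally more economical since the key vanishing ($V_\mu^{L_Q}=0$ for non-dominant $\mu$) is a more elementary fact about algebraic representations than the exactness in Corollary \ref{cor:urban jones}, and it sidesteps the group-theoretic decomposition entirely. One small presentational note: you only need $\ell \in L_Q(\Zp)\cap N(\Zp)$ rather than all of $L_Q(\Zp)\cap J_B$ to cover $N(\Zp)$, as you rightly observe in your final paragraph, so the over-general range of $\ell$ earlier in the argument is harmless slack.
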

\begin{proof}
	If $f \in \A_{\lambda,0}^Q$, then $\cR_n(f) \in V_{\lambda}^{L_Q}$ for all $n \in N_Q(\Zp)$ by Proposition \ref{prop:Rn}; thus 
	\[
	\cR_n(\Theta_\alpha(f)) = \Theta_\alpha(\cR_n(f))\  \in V_{w_\alpha * \lambda}^{L_Q}
	\]
	is also algebraic, the equality being Lemma \ref{lem:theta and R}. Then \ref{prop:Rn} again says $\Theta_\alpha(f) \in \A_{w_\alpha*\lambda,0}^Q$.
	
	As $\alpha$ is a root for $L_{Q}$, the weight $w_\alpha * \lambda$ is not dominant for $L_Q$. It follows that $V_{w_\alpha * \lambda}^{L_Q} = 0$, which forces $\A_{w_\alpha *\lambda,0}^Q = 0$ by definition. It follows that $\A_{\lambda,0}^Q \subset \ker(\Theta_\alpha)$.
\end{proof}

We saw if $\alpha \in \Delta_Q$, then $\Theta_\alpha(\A_{\lambda,0}^Q)\subset\A_{w_\alpha*\lambda,0}^Q$. We want to prove this for $\alpha \notin \Delta_Q$. Such an $\alpha$ is not a root of $L_Q$, so we cannot follow the same strategy. Instead, we argue directly:

\begin{proposition} \label{prop:alpha lands in Q}
	For $\alpha \in \Delta\backslash\Delta_{Q}$, we have $\Theta_{\alpha}\big(\A_{\lambda,0}^{Q}\big) \subset \A_{w_{\alpha}*\lambda,0}^{Q}$. 
\end{proposition}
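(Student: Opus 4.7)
The plan is to verify the criterion of Proposition \ref{prop:Rn}: it suffices to show $\cR_n(\Theta_\alpha(\iota_Q f)) \in V_{w_\alpha * \lambda}^{L_Q}$ for every $n \in N_Q(\Zp)$. Since $\Theta_\alpha(\iota_Q f)$ already lies in $\cA_{w_\alpha*\lambda,0}^G$, it automatically has the required $L_Q \cap B^-$-transformation property, and only the regularity (algebraicity) of the function $\ell \mapsto \Theta_\alpha(\iota_Q f)(\ell n)$ on $L_Q(\Zp) \cap J_B$ needs to be checked.

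The critical input is that $\alpha \in \Delta \setminus \Delta_Q$ forces $X_\alpha \in \fn_Q$, and $L_Q$ preserves $\fn_Q$ under the adjoint action. Concretely, for $\ell \in L_Q(\Zp) \cap J_B$ we have
\[
\exp(-tX_\alpha)\,\ell \;=\; \ell \cdot \exp\!\big({-t\,Y(\ell)}\big), \qquad Y(\ell) \defeq \mathrm{Ad}(\ell^{-1})X_\alpha \in \fn_Q,
\]
where $Y(\ell)$ depends \emph{regularly} on $\ell$. Using the identity $\iota_Q(f)(\ell m) = f(m)(\ell)$ valid for $m \in N_Q(\Zp)$ (see the proof of Proposition \ref{prop:Rn}), we obtain
\[
\iota_Q(f)\big(\exp(-tX_\alpha)\,\ell n\big) \;=\; f\big(\exp(-tY(\ell))\,n\big)(\ell).
\]

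Fix a basis $\{v_i\}$ of the finite-dimensional space $V_\lambda^{L_Q}$, and expand $f(m) = \sum_i g_i(m)\,v_i$, so that each $g_i : N_Q(\Zp) \to L$ is analytic. Applying $\Theta_\alpha = l(X_\alpha^k) \cdot$, and using the standard identity $\tfrac{d^k}{dt^k}|_{t=0}\,g_i(\exp(-tY(\ell))n) = l(Y(\ell)^k)\cdot g_i(n)$ together with $Y(\ell)^k = \mathrm{Ad}(\ell^{-1})\,X_\alpha^k$ in $\fU(\fn_Q)$, I get
\[
\Theta_\alpha(\iota_Q f)(\ell n) \;=\; \sum_i v_i(\ell) \cdot l\!\big(\mathrm{Ad}(\ell^{-1})X_\alpha^k\big)\cdot g_i(n).
\]
Expanding $\mathrm{Ad}(\ell^{-1})X_\alpha^k = \sum_{\mathbf{m}} c_{\mathbf{m}}(\ell)\,Z_{\mathbf{m}}$ in a PBW basis $\{Z_{\mathbf{m}}\}$ of the degree $\leq k$ part of $\fU(\fn_Q)$, each coefficient $c_{\mathbf{m}}(\ell)$ is regular on $L_Q$. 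Therefore
\[
\Theta_\alpha(\iota_Q f)(\ell n) \;=\; \sum_{i,\mathbf{m}} v_i(\ell)\,c_{\mathbf{m}}(\ell)\cdot\big(l(Z_{\mathbf{m}})g_i\big)(n)
\]
is a \emph{finite} sum of products of regular functions of $\ell$ with analytic functions of $n$, so it is regular in $\ell$ for each fixed $n$, as required.

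The main conceptual step --- and really the entire content of the proof --- is recognising that $X_\alpha \in \fn_Q$ for $\alpha \notin \Delta_Q$, so that $L_Q$-conjugation preserves the direction of differentiation and transfers the $\ell$-dependence of $\exp(-tX_\alpha)\ell n$ into regular (rather than transcendentally analytic) coefficients on $L_Q$. Given this observation, the remaining work --- chain rule, PBW expansion, and verifying transformation laws --- is routine.
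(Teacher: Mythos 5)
Your proof is correct and follows the same essential strategy as the paper's: the whole content is the observation that $\alpha \notin \Delta_Q$ puts $X_\alpha \in \fn_Q$, so that conjugating $\exp(-tX_\alpha)$ by $\ell \in L_Q(\Zp)$ stays in $N_Q(\Zp)$ with algebraic $\ell$-dependence, after which $\Theta_\alpha$ differentiates only in the $N_Q$-direction and produces algebraic coefficients on $L_Q$. The paper implements this in explicit coordinates $(y_i,z_j)$ and reduces (by replacing $f$ with $n*f$) to restricting along $n=\mathrm{id}$, then appeals to a Taylor-coefficient computation; you instead keep $n$ general, phrase the conjugation invariantly as $Y(\ell) = \Ad(\ell^{-1})X_\alpha$, and package the $k$-th order differentiation via $l(Y(\ell)^k)$ followed by a PBW expansion in $\fU(\fn_Q)$. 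Your version is arguably cleaner: the regularity of $\Ad(\ell^{-1})$ on $L_Q$ and the PBW expansion make the algebraicity in $\ell$ transparent, whereas the paper's coordinate statement ("$z_j(e(\ell,t))$ are ... algebraic in the $y_i(\ell)$") compresses essentially the same observation. You also implicitly sidestep a slight imprecision in the paper's wording (which asserts $N_Q(\Zp)$ is normal in $G(\Zp)$ when only normality in $Q$, i.e.\ that $L_Q$ normalises $N_Q$, is needed or true). The only thing worth adding for completeness is a one-line remark justifying that $\exp(-tY(\ell))n$ remains in $N_Q(\Zp)$ for $t$ in a sufficiently small neighbourhood of $0$, matching the paper's corresponding caveat.
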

\begin{proof} 
	Choose a set of co-ordinates $y_i$ on $L_Q(\Zp)\cap J_B$ that identify it as a subset of $\Zp^r$. We also have a set of co-ordinates $z_j$ on $N_Q(\Zp)$, indexed by $j \in R^+\backslash R_Q^+$ as in \eqref{eq:RQ+}. Let $f \in \A_{\lambda,0}^Q$. If $g \in Q(\zp)\cap J_B$, write it as 
	\[
	g = \ell_g n_g, \hspace{20pt} \ell_g \in L_Q(\Zp)\cap J_B, n_g \in N_Q(\Zp).
	\]
	We may write $f(g) = f(y_i(\ell_g), z_j(n_g))$ in the co-ordinates above; then by definition, $f$ is algebraic in the $y_i$ and analytic in the $z_j$.
	
	To show the proposition, by Proposition \ref{prop:Rn} we must show that $\cR_n(\Theta_\alpha(f))$ is algebraic on $L_Q(\zp)\cap J_B$ for all $n \in N_Q(\Zp)$. If $\ell \in L_Q(\zp)\cap J_B$, then
	\[
		\cR_n(\Theta_\alpha(f))(\ell) = (n * \Theta_\alpha(f))(\ell) = \Theta_\alpha(n * f)(\ell),
	\]
	the last equality since $\Theta_\alpha$ respects the $*$ action of $J_B$. Replacing $f$ with $n*f$, it then suffices to prove that the restriction of $\Theta_\alpha(f)$ to $L_Q(\zp)\cap J_B$ lies in $V_\lambda^{L_Q}$. By definition, this is the function 
	\[
		\ell \longmapsto \tfrac{d}{dt}f\big(\mathrm{exp}(-tX_{\alpha})\ell\big)\big|_{t= 0}.
	\]
	Since $\alpha \notin \Delta_Q$, a sufficiently small neighbourhood $U$ of $0$ in $\Qp X_\alpha \subset \fn$ is contained in $\fn_Q(\zp)$. For $t$ in such a $U$, we have $\exp(-tX_\alpha) \in N_Q(\zp)$. This is a normal subgroup in $G(\zp)$, so in particular, for any $\ell \in L_Q(\Zp) \cap J_B$ we have $\exp(-tX_\alpha)\ell = \ell e(\ell,t)$ with $e (\ell,t) = \ell^{-1} \cdot \exp(-tX_\alpha) \cdot \ell \in N_Q(\Zp).$	Then we have
	\begin{align*}
		\Theta(f)(\ell) &= \tfrac{d}{dt} f(\ell e(\ell,t))\big|_{t=0}\\
		&= \tfrac{d}{dt}f(y_i(\ell) ; z_j(e(\ell,t)))|_{t=0}.
	\end{align*}
The co-ordinates $z_j(e(\ell,t))$, which are linear functions in $t$, are algebraic in the $y_i(\ell)$ (since inverse and multiplication operations are algebraic on a reductive group).	We know $f$ is algebraic in the $y_i(\ell)$, and analytic in the $z_j(e(\ell,t))$; and by above the coefficient of the linear term in $t$ is algebraic in the $y_i(\ell)$. We deduce that $\Theta_\alpha(f)(\ell) = \tfrac{d}{dt}f(\ell e(\ell,t))|_{t=0}$ is algebraic in the $y_i(\ell)$, as required.
\end{proof}

\subsection{The parahoric analytic BGG resolution}\label{sec:la BGG}
\begin{proposition}\label{prop:loc an BGG 1}
	For a classical weight $\lambda$, there is an exact sequence
	\[
		0 \to \A_{\lambda,0}^Q \longrightarrow \cA_{\lambda,0}^G \xrightarrow{\oplus \Theta_\alpha} \bigoplus_{\alpha \in  \Delta_Q} \cA_{w_\alpha * \lambda,0}^G.
	\]		
\end{proposition}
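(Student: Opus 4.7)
The plan is to assemble the exact sequence from pieces already established. The injection $\A^Q_{\lambda,0} \hookrightarrow \cA^G_{\lambda,0}$ is exactly the map $\iota_Q$ from Proposition \ref{prop:subspace}, so the only real content is to identify the image of $\A^Q_{\lambda,0}$ with the kernel of $\oplus_{\alpha \in \Delta_Q} \Theta_\alpha$.

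One containment is immediate: Lemma \ref{lem:in kernel} tells us that $\A^Q_{\lambda,0} \subset \ker(\Theta_\alpha)$ for every $\alpha \in \Delta_Q$, so $\A^Q_{\lambda,0}$ lies inside the kernel of the direct sum.

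For the reverse containment, I would proceed as follows. Let $f \in \cA^G_{\lambda,0}$ with $\Theta_\alpha(f) = 0$ for every $\alpha \in \Delta_Q$. By Proposition \ref{prop:Rn}, to show $f \in \A^Q_{\lambda,0}$ it suffices to check that $\cR_n(f) \in V^{L_Q}_\lambda$ for every $n \in N_Q(\Zp)$. Fix such an $n$. By Lemma \ref{lem:theta and R}, for each $\alpha \in \Delta_Q$ we have
\[
\Theta_\alpha\bigl(\cR_n(f)\bigr) \;=\; \cR_n\bigl(\Theta_\alpha(f)\bigr) \;=\; 0.
\]
Now the key observation is that $\Delta_Q$ is precisely the set of simple roots of the Levi $L_Q$, so the functions $\Theta_\alpha$ appearing here are exactly those produced by the locally analytic BGG resolution \emph{for the reductive group $L_Q$}. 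Applying Corollary \ref{cor:urban jones} with $L_Q$ in place of $G$ to the element $\cR_n(f) \in \cA^{L_Q}_{\lambda,0}$, we conclude that $\cR_n(f) \in V^{L_Q}_\lambda$, as required.

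The only step that requires genuine care is verifying that Corollary \ref{cor:urban jones} applies cleanly to $L_Q$: one must know that the $\Theta_\alpha$ we are using agree, under $\cR_n$, with those intrinsic to $L_Q$ (this is exactly the content of Lemma \ref{lem:theta and R}) and that a classical weight $\lambda$ for $G$ restricts to a classical weight for $L_Q$. The latter is immediate from the definition of $\Delta_Q$, since dominance of $\lambda$ with respect to $B$ implies $\langle \lambda, \alpha^\vee\rangle \geq 0$ for all $\alpha \in \Delta \supset \Delta_Q = \Delta_{L_Q}$. With these remarks in place, the two containments combine to give the exact sequence.
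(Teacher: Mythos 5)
Your argument is correct and matches the paper's own proof step by step: one containment via Lemma \ref{lem:in kernel}, then $\cR_n$ together with Lemma \ref{lem:theta and R} and Corollary \ref{cor:urban jones} applied to $L_Q$ to get the reverse containment, concluding via Proposition \ref{prop:Rn}. The closing sanity check that $\lambda$ remains classical (dominant) for $L_Q$ is a nice touch that the paper leaves implicit.
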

\begin{proof}
	That $\A_{\lambda,0}^Q \subset \bigcap \ker(\Theta_\alpha)$ is an immediate consequence of Lemma \ref{lem:in kernel}. To see the converse, suppose $f \in \ker \defeq \bigcap \ker(\Theta_\alpha)$. Then for all $n \in N_Q(\Zp)$, by Lemma \ref{lem:theta and R} we have $\Theta_\alpha(\cR_n(f)) = \cR_n(\Theta_\alpha(f)) = 0$ for any $\alpha \in \Delta_Q$. Thus we have a diagram
	\[
		\xymatrix@C=15mm@R=5mm{
				0 \ar[r] &\ker \ar[r]\ar[d]^{\cR_n} 	& 
				\cA_{\lambda,0}^G \ar[r]^-{\oplus\Theta_\alpha}\ar[d]^{\cR_n} 	& 
				\displaystyle\bigoplus_{\alpha \in \Delta_Q} \cA_{w_\alpha * \lambda,0}^G\ar[d]^{\cR_n}\\
				0 \ar[r] & V_{\lambda}^{L_Q} \ar[r] & \cA_{\lambda,0}^{L_Q} \ar[r]^-{\oplus \Theta_\alpha} &
				\displaystyle\bigoplus_{\alpha \in \Delta_Q} \cA_{w_\alpha* \lambda,0}^{L_Q},
			}
	\]
	where exactness of the bottom row is Corollary \ref{cor:urban jones} for the group $L_Q$, noting that $\Delta_Q$ is precisely the set of simple roots for $L_Q$ corresponding to the Borel $B \cap L_Q$. But then $\cR_n(f) \in V_{\lambda}^{L_Q}$ for any $n$; thus by Proposition \ref{prop:Rn} we have $f \in \A_{\lambda,0}^Q$, as required.
\end{proof}

\begin{corollary}\label{cor:loc an BGG 2}
	Let $P \subset Q$ be two standard parabolics, with $\Delta_P \cup\{\beta\} = \Delta_Q$ (that is, there is no parabolic $P'$ with $P \subsetneq P' \subsetneq Q$). There is an exact sequence
	\[
		0 \to \A_{\lambda,0}^Q \longrightarrow \A_{\lambda,0}^P \xrightarrow{\Theta_{\beta}} \A_{w_\beta * \lambda,0}^P.
	\]
\end{corollary}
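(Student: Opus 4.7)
The plan is to deduce this corollary directly from the three preceding results: Proposition \ref{prop:loc an BGG 1} (which characterises $\A_{\lambda,0}^Q$ as a kernel inside $\cA_{\lambda,0}^G$), Remark \ref{rem:inclusion} (which gives the inclusion $\A_{\lambda,0}^Q \subset \A_{\lambda,0}^P$), and Proposition \ref{prop:alpha lands in Q} (which shows $\Theta_\beta$ maps $\A_{\lambda,0}^P$ into $\A_{w_\beta * \lambda,0}^P$). The point is that once these three facts are in hand, no further explicit computation is needed.

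First, I would fix the embedding $\iota_P : \A_{\lambda,0}^P \hookrightarrow \cA_{\lambda,0}^G$ from Proposition \ref{prop:subspace} and identify $\A_{\lambda,0}^Q$ with its image under $\iota_Q$, so that by Remark \ref{rem:inclusion} both are subspaces of the same ambient $\cA_{\lambda,0}^G$, with $\A_{\lambda,0}^Q \subset \A_{\lambda,0}^P$. The natural map $\A_{\lambda,0}^Q \to \A_{\lambda,0}^P$ in the corollary is then just this inclusion and so is visibly injective.

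Next, I would apply Proposition \ref{prop:loc an BGG 1} twice, to $P$ and to $Q$, to identify
\[
\A_{\lambda,0}^P = \bigcap_{\alpha \in \Delta_P} \ker\!\left(\Theta_\alpha : \cA_{\lambda,0}^G \to \cA_{w_\alpha * \lambda,0}^G\right), \qquad \A_{\lambda,0}^Q = \bigcap_{\alpha \in \Delta_Q} \ker\!\left(\Theta_\alpha\right),
\]
as subspaces of $\cA_{\lambda,0}^G$. Since $\Delta_Q = \Delta_P \sqcup \{\beta\}$, intersecting gives immediately
\[
\A_{\lambda,0}^Q \;=\; \A_{\lambda,0}^P \cap \ker\!\left(\Theta_\beta : \cA_{\lambda,0}^G \to \cA_{w_\beta * \lambda,0}^G\right).
\]

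Finally, since $\beta \in \Delta_Q \setminus \Delta_P \subset \Delta \setminus \Delta_P$, Proposition \ref{prop:alpha lands in Q} applied to $P$ gives $\Theta_\beta\!\left(\A_{\lambda,0}^P\right) \subset \A_{w_\beta * \lambda,0}^P$. Thus $\Theta_\beta$ restricts to a well-defined map $\A_{\lambda,0}^P \to \A_{w_\beta*\lambda,0}^P$, and its kernel, computed inside $\cA_{\lambda,0}^G$, is precisely $\A_{\lambda,0}^P \cap \ker(\Theta_\beta) = \A_{\lambda,0}^Q$. This establishes exactness. There is no real obstacle here; the only point requiring a line of care is checking that the kernel of the restricted map $\Theta_\beta|_{\A_{\lambda,0}^P}$ coincides with the intersection taken inside $\cA_{\lambda,0}^G$, which is immediate from the fact that $\A_{\lambda,0}^P \hookrightarrow \cA_{\lambda,0}^G$ is an injection of $\fg$-stable subspaces compatible with the $\Theta_\beta$ actions on source and target.
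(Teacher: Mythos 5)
Your proof is correct and follows essentially the same route as the paper's: both arguments rest on Proposition \ref{prop:loc an BGG 1}, Remark \ref{rem:inclusion}, and Proposition \ref{prop:alpha lands in Q}, and both identify $\A_{\lambda,0}^Q$ as the kernel of $\Theta_\beta$ restricted to $\A_{\lambda,0}^P$. The only cosmetic difference is that you apply Proposition \ref{prop:loc an BGG 1} twice (to $P$ and to $Q$) and take the intersection of kernels, whereas the paper restricts the full map $\oplus_{\alpha\in\Delta_Q}\Theta_\alpha$ to $\A_{\lambda,0}^P$ and invokes Lemma \ref{lem:in kernel} to observe that the direct sum collapses to the single term $\Theta_\beta$; both routes are equivalent, since Proposition \ref{prop:loc an BGG 1} is itself proved via Lemma \ref{lem:in kernel}.
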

\begin{proof}
 	We restrict the map $\oplus\Theta_\alpha$ of \ref{prop:loc an BGG 1} from $\cA_{\lambda,0}^G$ to $\A_{\lambda,0}^P$. It is clear that the kernel of this restriction is $\A_{\lambda,0}^Q \cap \A_{\lambda,0}^P = \A_{\lambda,0}^Q$, the equality following by Remark \ref{rem:inclusion}. If $\alpha \in \Delta_Q$ is not equal to $\beta$, then $\alpha \in \Delta_P$, so $\A_{\lambda,0}^P \subset \ker(\Theta_\alpha)$ by Lemma \ref{lem:in kernel}. In particular, the direct sum $\oplus_{\alpha \in \Delta_Q}\Theta_\alpha$ collapses, with $\Theta_{\beta}$ the only non-zero term. The image lands in $\A_{w_\beta *\lambda,0}^P$ by Proposition \ref{prop:alpha lands in Q}, giving the claimed exact sequence.
 \end{proof}

\subsection{Proof of Theorem \ref{thm:Q-classicality}}
We can finally prove our main result. Recall from Theorem \ref{thm:Q-classicality} that $Q = P_0 \subset \cdots \subset P_m = G$ is a maximal chain of parabolics, $\Delta_{P_{i-1}} \cup \{\alpha_i\} = \Delta_{P_i}$, $t_i \in T^+$ with $v_p(\alpha_i(t_i))<0$, $U_i \defeq U_{t_i}$ and $h_i < h_i^{\mathrm{crit}} \defeq h^{\mathrm{crit}}(t_i,\alpha_i,\lambda)$. 

\begin{proof}\emph{(Theorem \ref{thm:Q-classicality})}.
	First we make sense of taking $U_i$-slope decompositions on $\D^{P_i}$-cohomology. Note that $t_Q = t_1\cdots t_m$ is in $T_Q^{++} \subset T_{P_i}^{++}$ by Proposition \ref{prop:Q controlling}, hence it acts compactly on each $\D_{\lambda,0}^{P_i}$ by Lemma \ref{lem:compact}; we get a $Q$-controlling operator $U_{\mathrm{aux}} = U_{t_Q}$ on $\hc{\bullet}\big(S_K,\D_{\lambda,0}^{P_i}\big)$ for each $i$, and we can take slope decompositions. Let $h_{\mathrm{aux}} \gg v_p(\phi(U_{\mathrm{aux}}))$, so that for each $i$, we have
	\begin{equation}\label{eq:aux slope}
	\hc{\bullet}\big(S_K,\D_{\lambda,0}^{P_i}\big)_\phi \subset  \hc{\bullet}\big(S_K,\D_{\lambda,0}^{P_i}\big)^{U_{\mathrm{aux}} \leq h_{\mathrm{aux}}}.
	\end{equation}

	By the theory of slope decompositions, the right-hand side is Hecke-stable and finite-dimensional over $L$; thus we may take further slope decompositions for $U_i$, as they always exist on finite-dimensional spaces.
	
	\begin{lemma} \label{prop:PQ classicality}
		The map $\rho_\lambda$ induces an isomorphism
		\[
		\big[\h^j_{\mathrm{c}}(S_{K}, \D_{\lambda,0}^{P_{i-1}})^{U_{\mathrm{aux}}\leq h_{\mathrm{aux}}}\big]^{U_i \leq h_i} \isorightarrow \big[\h^j_{\mathrm{c}}(S_{K},\D_{\lambda,0}^{P_{i}})^{U_{\mathrm{aux}}\leq h_{\mathrm{aux}}}\big]^{U_i \leq h_i}.
		\]
	\end{lemma}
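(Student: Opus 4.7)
The plan is to apply the parahoric BGG resolution of Corollary \ref{cor:loc an BGG 2}, dualize to obtain short exact sequences of distribution modules, and exploit the twist formula \eqref{eq:transform} to control slopes on the resulting error terms in cohomology.

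Setting $C \defeq \mathrm{im}(\Theta_{\alpha_i})$, a closed $\Sigma_{P_{i-1}}$-submodule of $\A_{w_{\alpha_i}*\lambda,0}^{P_{i-1}}$, Corollary \ref{cor:loc an BGG 2} gives two short exact sequences
\[
0 \to \A_{\lambda,0}^{P_i} \to \A_{\lambda,0}^{P_{i-1}} \to C \to 0 \qquad \text{and} \qquad 0 \to C \to \A_{w_{\alpha_i}*\lambda,0}^{P_{i-1}} \to C' \to 0,
\]
whose dualizations are
\[
0 \to C^\vee \to \D_{\lambda,0}^{P_{i-1}} \to \D_{\lambda,0}^{P_i} \to 0 \qquad \text{and} \qquad 0 \to K \to \D_{w_{\alpha_i}*\lambda,0}^{P_{i-1}} \xrightarrow{\Theta_{\alpha_i}^\vee} C^\vee \to 0.
\]
The first sequence gives a long exact sequence in $\hc{\bullet}(S_K,-)$. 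After restricting everywhere to the $U_{\mathrm{aux}}\leq h_{\mathrm{aux}}$-parts (where finite-dimensional $U_i$-slope decompositions are automatic), the lemma reduces to showing $\hc{\bullet}(S_K, C^\vee)^{U_{\mathrm{aux}} \leq h_{\mathrm{aux}}, U_i \leq h_i} = 0$.

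The key observation is that $C^\vee$ carries two compatible $T^+$-actions: the action $U_i^{\mathrm{sub}}$ inherited as a submodule of $\D^{P_{i-1}}_{\lambda,0}$, and the action $U_i^{\mathrm{nat}}$ obtained as a quotient of $\D^{P_{i-1}}_{w_{\alpha_i}*\lambda,0}$ via $\Theta^\vee_{\alpha_i}$. Dualizing \eqref{eq:transform} yields $\Theta^\vee_{\alpha_i}(\mu)*t = c(t)\,\Theta^\vee_{\alpha_i}(\mu*t)$, where $c(t) = \alpha_i(\zeta(t))^{-\langle \lambda, \alpha_i^\vee\rangle - 1}$ satisfies $v_p(c(t_i)) = h_i^{\mathrm{crit}}$. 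Hence $U_i^{\mathrm{sub}} = c(t_i)\,U_i^{\mathrm{nat}}$, and by \eqref{eq:scaled slope} the $(U_i \leq h_i)$-piece (for $U_i = U_i^{\mathrm{sub}}$) coincides with the $(U_i^{\mathrm{nat}} \leq h_i - h_i^{\mathrm{crit}})$-piece, whose slope bound is strictly negative by the hypothesis $h_i < h_i^{\mathrm{crit}}$.

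It therefore suffices to check that all $U_i^{\mathrm{nat}}$-slopes on $\hc{\bullet}(S_K, C^\vee)^{U_{\mathrm{aux}}\leq h_{\mathrm{aux}}}$ are non-negative. The long exact sequence from the second SES above displays each $\hc{j}(C^\vee)$ as an extension of a subspace of $\hc{j+1}(K)$ by a quotient of $\hc{j}(\D^{P_{i-1}}_{w_{\alpha_i}*\lambda,0})$, and both of these latter modules admit $U_i$-stable integral lattices: for $\D^{P_{i-1}}_{w_{\alpha_i}*\lambda,0}$ directly by Remark \ref{rem:dot action}(i), and for $K$ by intersection with this lattice, since $K$ is a closed $\Sigma_{P_{i-1}}$-submodule. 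A standard argument then forces all $U_i$-slopes on the resulting finite-dimensional slope pieces to be $\geq 0$, which completes the proof. The main technical work is to set up cleanly the $U_{\mathrm{aux}}$-slope decompositions and integral structures on the auxiliary modules $C^\vee$ and $K$, and to track the bookkeeping between the two $T^+$-actions on $C^\vee$; once these are in place, the slope vanishing follows directly from the twist relation and integrality.
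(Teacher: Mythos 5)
Your proof is correct and follows essentially the same route as the paper: dualise Corollary \ref{cor:loc an BGG 2}, pass to the long exact sequence, and use the twist formula \eqref{eq:transform} together with the existence of a $\Sigma$-stable integral lattice to show the error terms vanish (the paper packages this final step as \cite[Lem.\ 9.1]{BW_CJM}). The only cosmetic difference is your detour through the second short exact sequence and the kernel $K$: this is unnecessary, since $C^\vee = \D^{P_{i-1}}_{w_{\alpha_i}\ast\lambda,0}/\ker(\Theta^\vee_{\alpha_i})$ directly inherits a $\Sigma_{P_{i-1}}$-stable lattice as the image of $\D^{P_{i-1}}_{w_{\alpha_i}\ast\lambda,0}(\cO_L)$, which is what the paper uses.
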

	\begin{proof}
		Consider Corollary \ref{cor:loc an BGG 2} applied to the pair $(P_{i-1},P_{i})$. Dualising gives an exact sequence
		\[
		\D_{w_{\alpha_i}\ast \lambda,0}^{P_{i-1}} \xrightarrow{\Theta_{\alpha_i}} \D_{\lambda,0}^{P_{i-1}} \longrightarrow \D_{\lambda,0}^{P_{i}} \to 0.
		\]
		Let $D^{P_{i-1}}= \D_{w_{\alpha_i}\ast \lambda,0}^{P_{i-1}} \big/\mathrm{ker}(\Theta_{\alpha_i})$.  
			We get an induced long exact sequence of cohomology
		\[
		\hc{j}(S_K,D^{P_{i-1}}) \to \hc{j}(S_K,\D_{\lambda,0}^{P_{i-1}}) \to \hc{j}(S_K, \D_{\lambda,0}^{P_{i}}) \to \hc{j+1}(S_K,D^{P_{i-1}}).
		\]
		By \eqref{eq:transform}, this sequence is equivariant for the operators $\alpha_i(\zeta(t_Q))^{-\langle\lambda,H_{\alpha_i}\rangle - 1}U_{\mathrm{aux}}$ (for $D^{P_{i-1}}$-coefficients), and $U_{\mathrm{aux}}$  (for $\D_{\lambda,0}^{P_{i-1}}$, $\D_{\lambda,0}^{P_{i}}$ coefficients); let $h_{\mathrm{aux}}' = h_{\mathrm{aux}} - h^{\mathrm{crit}}(t_Q,\alpha_i,\lambda)$, which is still $\gg v_p(\phi(U_{\mathrm{aux}}))$. As taking slope decompositions is exact \cite[Cor.\ 2.3.5]{Urb11}, after renormalising with \eqref{eq:scaled slope}, for each $i$ we obtain an exact sequence
		\begin{align*}
		\hc{j}\big(S_{K}, D^{P_{i-1}}&\big)^{U_{\mathrm{aux}}\leq h_{\mathrm{aux}}'}
		\rightarrow \hc{j}\big(S_{K},\D_{\lambda,0}^{P_{i-1}}\big)^{U_{\mathrm{aux}}\leq h_{\mathrm{aux}}}\\
		&\rightarrow \hc{j}\big(S_{K}, \D_{\lambda,0}^{P_{i}}\big)^{U_{\mathrm{aux}}\leq h_{\mathrm{aux}}}
		\rightarrow \hc{j+1}\big(S_{K},D^{P_{i-1}}\big)^{U_{\mathrm{aux}}\leq h_{\mathrm{aux}}'}.
		\end{align*}
		This sequence in turn is equivariant for the operators $\alpha_i(\zeta(t_i))^{-\langle \lambda, H_{\alpha_i} \rangle - 1} U_i$ and $U_i$ respectively, and taking further (renormalised) slope decompositions we obtain
		\begin{align*}
		\big[\hc{j}\big(S_{K}, D^{P_{i-1}}&\big)^{U_{\mathrm{aux}}\leq h_{\mathrm{aux}}'}\big]^{U_i \leq h_i- h_i^{\mathrm{crit}}} 
		\rightarrow \big[\hc{j}\big(S_{K},\D_{\lambda,0}^{P_{i-1}}\big)^{U_{\mathrm{aux}}\leq h_{\mathrm{aux}}}\big]^{U_i \leq h_i} \\
		&\rightarrow \big[\hc{j}\big(S_{K}, \D_{\lambda,0}^{P_{i}}\big)^{U_{\mathrm{aux}}\leq h_{\mathrm{aux}}}\big]^{U_i \leq h_i} 
		\rightarrow \big[\hc{j+1}\big(S_{K},D^{P_{i-1}}\big)^{U_{\mathrm{aux}}\leq h_{\mathrm{aux}}'}\big]^{U_i \leq h_i- h_i^{\mathrm{crit}}}.
		\end{align*}	 
		 using that $v_p(\alpha_i(\zeta(t_i))) = v_p(\alpha_i(t_i))$. From \S\ref{sec:integral} and Remark \ref{rem:dot action}, all of the coefficient spaces admit natural $\Sigma_{P_{i}}$-stable integral structures which give natural $U_i$-stable integral structures on the cohomology (and their small slope parts for $U_{\mathrm{aux}}$). As $h_i - h_i^{\mathrm{crit}} < 0$ by assumption, it follows that the first and last terms of the exact sequence vanish by \cite[Lem.\ 9.1]{BW_CJM}.
	\end{proof}
	
	We return to the proof of Theorem \ref{thm:Q-classicality}. For $M$ as above and $\mathbf{h} = (h_1,...,h_m)$, define
	\[
	M^{\leq \mathbf{h}} \defeq \textstyle\bigcap_{i=1}^m \big(M^{U_{\mathrm{aux}}\leq h_{\mathrm{aux}}}\big)^{U_i \leq h_i}.
	\]
	Since $\phi(U_i)$ has $p$-adic valuation $h_i$, we know $U_{i}$ acts with slope $\leq h_i$ on $M_\phi$ for any space $M$; so for each $i$, combining with \eqref{eq:aux slope}, we immediately obtain
	\begin{equation}\label{eq:contained in}
	\hc{\bullet}\big(S_K,\D_{\lambda,0}^{P_i}\big)_\phi \subset  \hc{\bullet}\big(S_K,\D_{\lambda,0}^{P_i}\big)^{\leq \mathbf{h}}.
	\end{equation}
	Thus it suffices to prove that the slope criteria forces
	\[
	\hc{\bullet}\big(S_K,\D_{\lambda,0}^{Q}\big)^{\leq \mathbf{h}} \cong \hc{\bullet}\big(S_K,V_{\lambda}^{G,\vee}\big)^{\leq \mathbf{h}}.
	\]
	For each $i$, using the slope assumption and restricting Lemma \ref{prop:PQ classicality} we obtain isomorphisms 
	\[
	\hc{j}\big(S_K, \D_{\lambda,0}^{P_{i-1}}\big)^{\leq \mathbf{h}} \cong \hc{j}(S_K,\D_{\lambda,0}^{P_{i}})^{\leq \mathbf{h}}.
	\]
	Chaining this together for $i = 1,..., m$, we obtained the claimed isomorphism
	\begin{align*}
	\hc{\bullet}\big(S_K,\D_{\lambda,0}^{Q}\big)^{\leq \mathbf{h}} &= \hc{\bullet}\big(S_K,\D_{\lambda,0}^{P_{0}}\big)^{\leq \mathbf{h}} \cong \cdots \\
	&\cong \hc{\bullet}\big(S_K,\D_{\lambda,0}^{P_{m}}\big)^{\leq \mathbf{h}} = \hc{\bullet}\big(S_K,V_{\lambda}^{G,\vee}\big)^{\leq \mathbf{h}}.\ \ \ \ \qedhere
	\end{align*}
\end{proof}

\subsection{Hecke normalisations and connections to automorphic representations}\label{sec:warning}
We conclude this section with some remarks on applying these results in the context of automorphic representations. Recall from Remark \ref{rem:dot action} that there are two natural actions of $\Sigma_Q$ on $V_{\lambda}^{G,\vee}$: a $*$-action induced by considering $V_{\lambda}^{G,\vee}$ as a stable quotient of $\cD_{\lambda,0}^G$, well-adapted for $p$-adic computations; and a $\cdot$-action coming from the algebraic action, well-adapted to automorphic computations. As explained in Remark \ref{rem:dot action} these actions agree on $K$, so give the same local system on $S_K$; but they \emph{differ} on $T^+$, giving different Hecke actions on the resulting cohomology. Thus attached to $t \in T^+$ we get two Hecke operators $U_t^*$ and $U_t^\cdot$ on $\hc{\bullet}(S_K,V_{\lambda}^{G,\vee})$, and by \eqref{eq:modify}, we have
\[
	U_t^* = \lambda(\sigma(t)t^{-1}) \times U_t^{\cdot}.
\]

Now, let $\pi$ be a cuspidal cohomological automorphic representation of $\cG(\A)$ admitting $K$-invariant vectors, and fix an eigenform $\cF\in \pi_f^K$. We call the pair $(\pi,\cF)$ a \emph{$p$-refinement of $\pi$} and denote it $\tilde\pi$, with associated eigensystem $\phi_{\tilde\pi}^\cdot : \uhp_K\to \C$. In favourable situations, one may use Lie algebra cohomology and complex periods to construct a (typically non-canonical) Hecke eigenclass $\psi_{\tilde\pi} \in \hc{\bullet}(S_K,V_{\lambda}^{G,\vee})$ which \emph{for the $\cdot$-action} has the same Hecke eigenvalues as $\tilde\pi$. In particular, if we view $\psi_{\tilde\pi}$ as a class in the cohomology with the $*$-action -- as we have done throughout this paper -- we must instead consider slope conditions for $\phi_{\tilde\pi}^*(U_t) = \lambda(\sigma(t)t^{-1})\times\phi_{\tilde\pi}^\cdot(U_t)$. We summarise this in the following corollary of Theorem \ref{thm:Q-classicality}.

\begin{corollary}
	Let $\tilde\pi$ be as above. Let $Q = P_0 \subset \cdots \subset P_m = G$ be a maximal chain of parabolics, and for $i = 1,...,m$, let $U_i$ be as in Theorem \ref{thm:Q-classicality}. Let $a_i  = \phi_{\tilde\pi}^\cdot(U_i)$ denote the $U_i$ eigenvalue of $\cF$, and let $a_i^\circ \defeq \lambda(\sigma(t)t_i^{-1})a_i$, an `integral normalisation' of $a_i$. If
	\[
		v_p(a_i^\circ) = v_p(\lambda^{-1}(t_i)) + v_p(a_i) < h^{\mathrm{crit}}(t_i,\alpha_i,\lambda)
	\]
	for all $i = 1,...,m$, then $\tilde\pi$ is $Q$-non-critical.
\end{corollary}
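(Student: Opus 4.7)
\medskip

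\noindent\textbf{Proof proposal.} The plan is to reduce directly to Theorem \ref{thm:Q-classicality} by reconciling the two Hecke normalisations discussed just before the corollary. By the convention introduced above the corollary, $\tilde\pi$ is declared $Q$-non-critical precisely when the system of eigenvalues $\phi_{\tilde\pi}^{\ast}$ carried by the class $\psi_{\tilde\pi}$ in $\hc{\bullet}(S_K,V_\lambda^{G,\vee})$ (viewed with the $\ast$-action, as is done throughout the paper) is $Q$-non-critical in the sense of Definition \ref{def:Q non-critical}. So the task is to verify that the slope hypotheses in the corollary coincide with those of Theorem \ref{thm:Q-classicality} applied to $\phi = \phi_{\tilde\pi}^{\ast}$.

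First I would record the comparison between the two actions. By the relation
\[
U_t^{\ast} \;=\; \lambda\bigl(\sigma(t)t^{-1}\bigr)\cdot U_t^{\cdot}
\]
recalled in \S\ref{sec:warning} (via \eqref{eq:modify} in Remark \ref{rem:dot action}), the eigensystem associated to $\psi_{\tilde\pi}$ in the $\ast$-normalisation satisfies $\phi_{\tilde\pi}^{\ast}(U_i) = \lambda(\sigma(t_i)t_i^{-1}) a_i = a_i^\circ$. Next I would simplify the valuation: since $\sigma(t_i) \in T(\Zp)$ and $\lambda \in X^{\bullet}(T)$ is algebraic, we have $\lambda(\sigma(t_i)) \in \cO_L^{\times}$, so $v_p(\lambda(\sigma(t_i))) = 0$. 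Therefore
\[
v_p\bigl(\phi_{\tilde\pi}^{\ast}(U_i)\bigr) \;=\; v_p(a_i^\circ) \;=\; v_p(\lambda^{-1}(t_i)) + v_p(a_i),
\]
which matches the quantity appearing on the left-hand side of the hypothesis.

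Finally, the hypothesis $v_p(a_i^\circ) < h^{\mathrm{crit}}(t_i,\alpha_i,\lambda)$ for every $i=1,\dots,m$ is then precisely the $Q$-non-critical slope condition of Theorem \ref{thm:Q-classicality} applied to the eigensystem $\phi_{\tilde\pi}^{\ast}$ together with the chosen maximal chain $Q = P_0 \subset \cdots \subset P_m = G$ and the operators $U_i$. Invoking Theorem \ref{thm:Q-classicality} then yields $Q$-non-criticality of $\phi_{\tilde\pi}^{\ast}$, hence of $\tilde\pi$, as required.

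There is no genuine obstacle here: the corollary is essentially a bookkeeping translation between the automorphic ($\cdot$) and integral ($\ast$) Hecke normalisations. The only point requiring care is the implicit check that $\lambda(\sigma(t_i))$ has trivial $p$-adic valuation, so that the shift by $\sigma(t_i)$ may be ignored in the slope computation; everything else is a direct application of Theorem \ref{thm:Q-classicality}.
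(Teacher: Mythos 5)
Your proof is correct and follows exactly the (implicit) argument of the paper: the corollary is a bookkeeping translation from the automorphic $\cdot$-normalisation to the integral $\ast$-normalisation via $U_t^* = \lambda(\sigma(t)t^{-1})U_t^\cdot$, together with the observation that $v_p(\lambda(\sigma(t_i)))=0$ since $\sigma(t_i)\in T(\Zp)$ and $\lambda$ is algebraic, followed by an application of Theorem \ref{thm:Q-classicality} to $\phi_{\tilde\pi}^{\ast}$. The paper gives no separate proof beyond its parenthetical note about the modification factor, which is precisely your valuation check.
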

(Note the modification factor is only $v_p(\lambda^{-1}(t_i))$ as $v_p(\lambda(\sigma(t_i))) = 0$). Sometimes the operator $\lambda(\sigma(t)t^{-1})U_t^\cdot$ is denoted $U_t^\circ$, and (in light of Remark \ref{rem:dot action}) is called the `optimal integral normalisation' of the classical automorphic Hecke operator $U_t^\cdot$. In other places -- e.g.\ \cite{Han17} -- the specialisation map is defined using the $*$-action on distributions and $\cdot$-action on algebraic coefficients, and is then referred to as an `intertwining' of $U_t^*$ and $\lambda(\sigma(t)t^{-1})U_t^\cdot$.

\begin{remark}
We remark finally that it there are two common sets of conventions when defining local systems. We have taken all of our modules to be \emph{right} $K$-modules, as this is natural/standard in the $p$-adic setting. For automorphic computations is is frequently more natural to consider only \emph{left} $K$-modules. This then flips every convention in this paper, so that for example the $\leq$ and $<$ of Propositions \ref{prop:valuation T++} and \ref{prop:Q controlling} become $\geq$ and $>$, the action of $K$ is induced by right-translation by the inverse, and the $*$-action is induced by $g \mapsto \sigma(t)^{-1}tgt^{-1}$. In particular, a controlling operator for $\GL_2$ would be given by $\smallmatrd{p}{}{}{1}$ rather than $\smallmatrd{1}{}{}{p}$. Let $w_0$ be the longest Weyl element for $G$, and let $\lambda^\vee = -\lambda^{w_0}$ denote the contragredient weight. Since $V_{\lambda}^{G,\vee} \cong V_{\lambda^\vee}^G$ when equipped with the left $\cdot$-actions, by mimicking the calculation of Remark \ref{rem:dot action}, we see that in this set-up we have instead that $U_t^* = \lambda^\vee(\sigma(t)^{-1}t) \times U_t^\cdot$, and we would define $a_i^\circ = \lambda^\vee(\sigma(t)^{-1}t_i)a_i$.
 \end{remark}

%
%
\section{Parabolic eigenvarieties}\label{sec:parabolic eigenvarieties}

We now construct a theory of parabolic families of automorphic representations. There are two approaches to constructing eigenvarieties from overconvergent cohomology, with differing benefits and drawbacks. We could use \emph{total} cohomology, as in \cite{Urb11, Han17}, giving more accessible general results; or a single degree of cohomology, which is often of more arithmetic use (see, for example, the `middle degree' eigenvariety of \cite{BH17}, or the `parallel weight' eigenvariety of \cite{BW18}). This, however, requires the pinning down of Hecke eigenpackets in the specified degree, so typically requires more refined arguments to study. In the below, a $*$ will denote either total cohomology $\bullet$ or a specific degree $d \in \Z_{\geq 0}$.

Fix throughout a parabolic subgroup $Q$, and a level group $K$ with $K_p \subset J_Q$; all our eigenvarieties will depend on this $K$, but since it is fixed we drop it from all notation.  Fix also a `base-weight' $\lambda_0 \in \cW$, giving a subspace $\Wlam \subset \W$ as in \S\ref{sec:weight spaces}. All other notation will be as above.

\subsection{Local pieces of the eigenvariety}
The eigenvarieties we consider are defined using the parahoric overconvergent cohomology groups for $Q$. The local pieces are defined as the rigid analytic spectra of Hecke algebras acting on these spaces.

Fix for the rest of this section a controlling operator $U_t$ (for $t \in T^{++}_Q$); all slope decompositions will be with respect to $U_t$. Let $\cU\subset \Wlam$ be an affinoid. The pair $(\cU,h)$ is a \emph{slope-adapted pair} if the cohomology $\h^*_{\mathrm{c}}(S_K,\D_{\cU}^Q)$ admits a slope $\leq h$ decomposition. Recall $\uhp(K)$ from Definition \ref{def:app hecke algebra}.

\begin{definition} 
	For a slope-adapted pair $(\cU,h)$, let 
	\[
	\bT_{\cU,h}^{Q,*} \defeq \text{ image of }\cH(K)\otimes_{\Qp}\cO(\cU)\text{ in }\mathrm{End}_{\cO(\cU)}\big(\h_{\mathrm{c}}^*\big(S_K,\D_{\cU}^Q\big)\ssh\big).
	\]
	Define the \emph{local piece of the eigenvariety} at $(\cU,h)$ to be the rigid analytic space
	\[
	\cE_{\cU,h}^{Q,*} \defeq \mathrm{Sp}\big(\bT_{\cU, h}^{Q,*}\big).
	\]
\end{definition}

The natural structure map $\cO(\cU) \rightarrow \bT_{\cU,h}^{Q,*}$ gives rise to a map $w : \cE_{\cU,h}^{Q,*} \rightarrow \cU$, which we call the \emph{weight map}. We get the following key property essentially by definition.

\begin{proposition}\label{prop:local eigenvariety} There is a bijection between:
	\begin{itemize}\setlength{\itemsep}{0pt}
		\item $L$-points $x = x(\phi)$ of the rigid space $\cE_{\cU,h}^{Q,*}$ with $w(x) = \lambda$, and
		\item systems of Hecke eigenvalues $\phi : \uhp(K) \to L$ that occur in the localisation 
		\[
		\hc{*}\big(S_K,\D_{\cU}^Q\big)_\lambda^{\leq h} \defeq \hc{*}\big(S_K,\D_{\cU}^Q\big)^{\leq h} \otimes_{\cO(\cU)} \cO(\cU)_{\m_\lambda},
		\]
		where $\m_\lambda \subset \cO(\cU)$ is the maximal ideal corresponding to $\lambda$.
	\end{itemize}
\end{proposition}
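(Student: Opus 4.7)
My plan is to reduce everything to commutative algebra on a finite $\cO(\cU)$-algebra. Abbreviate $M \defeq \hc{*}(S_K,\D_\cU^Q)^{\leq h}$ and $\bT \defeq \bT_{\cU,h}^{Q,*}$. The first step is to invoke Proposition \ref{prop:slopes}, which guarantees that $M$ is a finitely generated (indeed projective) $\cO(\cU)$-module. Consequently $\bT \hookrightarrow \mathrm{End}_{\cO(\cU)}(M)$ is a finite $\cO(\cU)$-algebra and the weight map $w: \cE_{\cU,h}^{Q,*} \to \cU$ is a finite morphism of rigid analytic spaces.

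For the first direction of the bijection, by definition of $\mathrm{Sp}$, an $L$-point $x$ of $\cE_{\cU,h}^{Q,*}$ above $\lambda$ is an $L$-algebra homomorphism $\psi_x: \bT \to L$ restricting to $\mathrm{ev}_\lambda$ on $\cO(\cU)$, and the associated eigensystem is $\phi \defeq \psi_x \circ \big(\uhp(K) \to \uhp(K)\otimes\cO(\cU) \twoheadrightarrow \bT\big)$. To check $\phi$ occurs in $M_\lambda \defeq M\otimes_{\cO(\cU)}\cO(\cU)_{\m_\lambda}$, I would pass to $\bT_\lambda \defeq \bT\otimes_{\cO(\cU)}\cO(\cU)_{\m_\lambda}$, which by finiteness is semi-local with all maximal ideals lying above $\m_\lambda\cO(\cU)_{\m_\lambda}$. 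In the CRT decomposition $M_\lambda = \bigoplus_{\mathfrak{n}} (M_\lambda)_\mathfrak{n}$, the summand corresponding to $\mathfrak{n} = \ker(\psi_x)\bT_\lambda$ is precisely $(M_\lambda)_\phi$, and it is non-zero since $\bT$ acts faithfully on $M$ by construction.

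Conversely, given $\phi$ occurring in $M_\lambda$, I would show that the algebra map $\phi\otimes\mathrm{ev}_\lambda : \uhp(K)\otimes\cO(\cU) \to L$ factors through $\bT$, thereby producing the required $\psi_x$. Any $T$ in the kernel of $\uhp(K)\otimes\cO(\cU) \twoheadrightarrow \bT$ annihilates $M$, and hence annihilates the non-zero generalised eigenspace $(M_\lambda)_\phi$; but on that eigenspace $T$ also acts as the scalar $(\phi\otimes\mathrm{ev}_\lambda)(T)$ plus a nilpotent endomorphism, so the scalar must vanish. The two constructions are visibly mutually inverse. The only delicate point I anticipate is assembling the semi-local/faithfulness argument of the previous paragraph carefully enough to guarantee that \emph{every} maximal ideal of $\bT_\lambda$ corresponds to a genuinely non-zero eigenspace; without this one only recovers an injection of $L$-points into occurring eigensystems, rather than the required bijection. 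Everything else is a routine consequence of finiteness.
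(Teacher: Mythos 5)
Your approach — reduce to commutative algebra of the finite $\cO(\cU)$-algebra $\bT := \bT_{\cU,h}^{Q,*}$ — is exactly what the paper does (its proof is one sentence in each direction and leaves precisely the two points you raise implicit). The overall structure is right, but two of your intermediate claims do not literally hold and should be replaced by the support argument you already gesture at.

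First, the ``CRT decomposition $M_\lambda = \bigoplus_{\mathfrak{n}}(M_\lambda)_{\mathfrak{n}}$'' is false in general: $\cO(\cU)_{\m_\lambda}$ is the \emph{algebraic} localisation and is not Henselian, and a finite algebra over a non-Henselian local ring need not split as a product of local rings (e.g.\ $\Z_{(5)}[i]$ is a domain with two maximal ideals above $5$). You do not need this. The faithfulness you invoke is already enough: $M$ is finite projective over $\cO(\cU)$ by the slope-decomposition theory, localisation is exact, and $\mathrm{End}_{\cO(\cU)}(M)\otimes_{\cO(\cU)}\cO(\cU)_{\m_\lambda}=\mathrm{End}_{\cO(\cU)_{\m_\lambda}}(M_\lambda)$ by finite presentation, so $\bT_\lambda\hookrightarrow\mathrm{End}(M_\lambda)$ and $\mathrm{Ann}_{\bT_\lambda}(M_\lambda)=0$. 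Since $M_\lambda$ is finitely generated, $\mathrm{Supp}_{\bT_\lambda}(M_\lambda)=V(0)=\mathrm{Spec}\,\bT_\lambda$, so $(M_\lambda)_{\mathfrak{n}}\neq 0$ for \emph{every} prime, in particular every maximal ideal above $\m_\lambda$. That is the ``delicate point'' you flagged, and this is the clean way to close it.

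Second, in the converse direction, the claim that $T$ acts on $(M_\lambda)_\phi$ as the scalar $(\phi\otimes\mathrm{ev}_\lambda)(T)$ plus a \emph{nilpotent} operator is not justified: $(M_\lambda)_\phi$ is a finitely generated module over the localisation of $\uhp(K)\otimes\cO(\cU)_{\m_\lambda}$ at $\ker(\phi\otimes\mathrm{ev}_\lambda)$, a local ring of possibly positive dimension, so the maximal ideal (and in particular $T-(\phi\otimes\mathrm{ev}_\lambda)(T)$) need not act nilpotently. What you actually need is only that $\mathrm{Ann}((M_\lambda)_\phi)$ is contained in the maximal ideal, which holds by Nakayama since $(M_\lambda)_\phi\neq 0$ is finitely generated over that local ring; equivalently and more directly, $(M_\lambda)_\phi\neq 0$ \emph{is} the statement that the prime $\ker(\phi\otimes\mathrm{ev}_\lambda)$ contains $\mathrm{Ann}(M_\lambda)$, which is exactly the kernel of $\uhp(K)\otimes\cO(\cU)_{\m_\lambda}\twoheadrightarrow\bT_\lambda$, giving the factorisation through $\bT$ at once. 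With these two repairs the proof is complete and matches the paper's.
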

\begin{proof}
	Such a point $x$ corresponds to a maximal ideal in $\m_x \subset \bT_{\cU,h}^{Q,*}$ with residue field $L$, and we obtain a surjective algebra homomorphism
	\[
		\phi_x : \uhp(K) \twoheadrightarrow \bT_{\cU,h}^{Q,*} \twoheadrightarrow \bT_{\cU,h}^{Q,*}/\m_x \cong L,
	\]
	which by definition occurs in $\hc{*}(S_K,\D_{\cU}^Q)^{\leq h}$. To say that $w(x) = \lambda$ means that the contraction $\m_x \cap \cO(\cU) = \m_\lambda$, and thus $\phi_x$ occurs in the stated localisation.
\end{proof}

\subsection{The global eigenvariety}\label{sec:global eigenvariety}
These local pieces glue into a `global' eigenvariety over the weight space $\Wlam$. This is straightforward using the `eigenvariety machine' of \cite[\S4.2]{Han17}; although non-minimal parabolics do not feature in Hansen's paper, the formalism of this machine carries over to this case with little (and often no) modification. As such, our treatment of the material will be terse. The key will be to identify an \emph{eigenvariety datum}, from which we may apply Theorem 4.2.2 \emph{op.\ cit}.\ to obtain our global eigenvariety.

\subsubsection{Fredholm power series and hypersurfaces}

The modules of analytic functions from previous sections give rise to Borel--Moore chain complexes $C_*^{\mathrm{BM}}(K,\A_{\cU,s}^Q)$ (dual to the compactly supported complex with distributions defined previously). The proofs of Propositions 3.1.2--3.1.5 of \cite{Han17} hold in our setting with no modification, showing that the (small slope) homology and cohomology of these complexes is compatible with changing the affinoid $\cU$. 

For each affinoid open $\cU \subset \Wlam$, and $s \geq s[\cU]$, we define a Fredholm series
\[
F_{\cU,s}^{Q}(X) \defeq \det\left(1-U_tX|C_*^{\mathrm{BM}}(K,\A_{\cU,s}^Q)\right) \in \cO(\cU)[[X]].
\]
This is independent of the choice of $s \geq s[\cU]$ (as in \cite[Proposition 3.1.1]{Han17}), so we simply denote it $F_{\cU}^Q(X)$. By Tate's acyclity theorem, there exists a unique 
\[
F^Q(X) \in \cO(\Wlam)[[X]]
\]
such that $F^Q(X)|_{\cU} = F_{\cU}^Q(X)$, and this is a Fredholm series over all of $\Wlam$. In particular, this defines a Fredholm hypersurface $\sZ^Q \subset \Wlam \times \mathbf{A}^1$, where $\mathbf{A}^1$ is affine 1-space. There is a natural map $w: \sZ^Q \rightarrow \Wlam$ given by projection to the first factor, and this has open image (see \cite[Proposition 4.1.3]{Han17}).

\begin{proposition}\emph{\cite[Prop.\ 4.1.4 and preceding discussion]{Han17}, or \cite[\S4]{Buz07}.}
	Let 
	\[
	\sZ_{\cU,h}^Q = \mathrm{Sp}\big(\cO(\cU)\langle p^hX\rangle/(F_{\cU}^Q(X))\big) \subset \sZ^Q.
	\]
	The natural map $\sZ_{\cU,h}^Q \rightarrow \cU$ is finite flat if and only if $(\cU,h)$ is a slope-adapted pair; we call such $\sZ_{\cU,h}^Q$ a \emph{slope-adapted affinoid}. The set of slope-adapted affinoids is an admissible cover of $\sZ^Q$.
\end{proposition}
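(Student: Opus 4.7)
The strategy is to apply the Riesz/Fredholm theory of compact operators on ONable Banach modules as developed in \cite[\S A]{Col97} and \cite[\S4--5]{Buz07}, working throughout with the characteristic series $F_{\cU}^Q$ of $U_t$ acting on the complex $C_{\mathrm{c}}^\bullet(K,\widetilde{\D}_{\cU,s}^Q)$ (whose ONability is Lemma \ref{lem:AU ON}, and whose characteristic series is independent of $s$ by the argument already noted).

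First I would establish the equivalence at a fixed affinoid. Suppose $(\cU,h)$ is slope-adapted; by Proposition \ref{prop:slopes} and the general theory \cite[Thm.\ 3.3]{Buz07}, this is equivalent to a factorisation $F_{\cU}^Q(X) = P(X)\cdot R(X)$ in $\cO(\cU)[[X]]$ where $P$ is a polynomial of some degree $n$ with unit leading coefficient whose Newton polygon has all slopes $\leq h$, and $R$ is a Fredholm series whose Newton polygon slopes are all strictly greater than $h$. The zeros of $R$ then have absolute value strictly greater than $p^h$, so $R$ is a unit in $\cO(\cU)\langle p^h X\rangle$. Hence $(F_\cU^Q(X)) = (P(X))$ in $\cO(\cU)\langle p^h X\rangle$, and Euclidean division by $P$ (valid because its leading coefficient is a unit) gives an isomorphism
\[
	\cO(\cU)\langle p^h X\rangle/(F_\cU^Q(X)) \;\cong\; \cO(\cU)[X]/(P(X)),
\]
which is free of rank $n$ over $\cO(\cU)$, hence finite flat. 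Conversely, if $\sZ_{\cU,h}^Q \to \cU$ is finite flat, the argument of \cite[\S4]{Buz07} extracts such a polynomial factor of $F_\cU^Q$ from the finiteness of the quotient, recovering the slope decomposition.

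Next I would verify admissibility of the cover by slope-adapted affinoids. Given a point $x = (\lambda_0,\alpha_0) \in \sZ^Q$, pick any rational $h$ with $h > v_p(\alpha_0^{-1})$ and not equal to the valuation of any zero of the specialisation $F^Q(X)|_{\lambda_0}$. A neighbourhood argument, essentially the content of \cite[Thm.\ 4.6]{Buz07}, perturbs the Riesz factorisation at $\lambda_0$ to a factorisation over a small affinoid neighbourhood $\cU$ of $\lambda_0$; this shows $(\cU,h)$ is slope-adapted and $x \in \sZ_{\cU,h}^Q$. Thus the slope-adapted affinoids set-theoretically cover $\sZ^Q$. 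For admissibility proper, I would invoke the criterion of \cite[\S4]{Buz07}: given any map from an affinoid $\mathrm{Sp}(A) \to \sZ^Q$, the pullback of $U_t$ acts compactly and the Fredholm series admits a slope $\leq h'$ factorisation for some $h'$; the composite factors through finitely many $\sZ_{\cU_i,h_i}^Q$, giving a finite subcover.

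The main obstacle is ensuring that Buzzard's formalism, developed over affinoids for Banach modules, applies to our compact Fr\'echet cohomology. This is handled by working with the ONable Banach approximations $\widetilde{\D}_{\cU,s}^Q$, using that slope decompositions and characteristic series are independent of $s$ by Proposition \ref{prop:slopes}(iii), and then passing to the inverse limit via Lemma \ref{lem:compact frechet}; no genuinely new input beyond \cite{Buz07} and \cite{Han17} is required.
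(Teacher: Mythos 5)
The paper does not give its own proof of this proposition; it cites \cite[Prop.\ 4.1.4]{Han17} and \cite[\S4]{Buz07} directly and moves on. Your outline faithfully reproduces the standard Riesz--Fredholm argument from those sources: the equivalence between slope-adapted pairs and slope-$\leq h$ factorisations of the Fredholm series, the observation that the large-slope factor $R$ is a unit on the disc $\{|X|\leq p^h\}$ so that the quotient is finite free, the perturbation argument for the set-theoretic cover, and the reduction to ONable Banach approximations $\widetilde{\D}_{\cU,s}^Q$ with passage to the limit. One small caveat worth being aware of, though it does not undermine the argument: the paper defines a slope-adapted pair as one for which the \emph{cohomology} $\hc{*}(S_K,\D_{\cU}^Q)$ admits a slope decomposition, whereas the Fredholm series $F_{\cU}^Q$ is the characteristic series on the Borel--Moore \emph{chain complex}; in the Ash--Stevens/Hansen framework the complex-level decomposition implies the cohomological one, and the converse (for $h$ away from the critical slopes, which is sufficient for the covering statement) is exactly what Hansen's Propositions 2.3.3--2.3.5 and the Fredholm factorisation encode, so your reduction to Buzzard's Theorem 3.3 is the right one, but it would be worth spelling out this step if you were writing a self-contained proof.
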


\subsubsection{The eigenvariety datum} 
We use the above to define an eigenvariety datum giving rise to the parabolic eigenvarieties. Indeed, the proof of \cite[Proposition 4.3.1]{Han17} (and the following paragraph) shows that there is a unique coherent analytic sheaf $\sM^*$ on $\sZ^Q$ such that 
\[
\sM^*(\sZ_{\cU,h}^{Q}) = \hc{*}\big(S_K,\D_{\cU}^Q\big)^{\leq h}.
\]
We then let $\psi: \cH(K) \rightarrow \mathrm{End}_{\cO_{\sZ^Q}}(\sM^*)$ be the obvious map giving the Hecke action on cohomology. Then $(\Wlam,\sZ^Q,\sM^*,\cH(K),\psi)$ is an eigenvariety datum, and \cite[Theorem 4.2.2]{Han17} then allows us to glue the local pieces of Proposition \ref{prop:local eigenvariety} into the following:

\begin{theorem}
	There exists a separated rigid analytic space $\cE_{\lambda_0}^{Q,*}$, together with a morphism $w : \cE_{\lambda_0}^{Q,*} \rightarrow \Wlam$, such that for every finite extension $L$ of $\Qp$, there is a bijection between:
	\begin{itemize}\setlength{\itemsep}{0pt}
		\item the $L$-points $x = x(\phi)$ of $\cE_{\lambda_0}^{Q,*}$ lying above a weight $\lambda$, and
		\item systems of Hecke eigenvalues $\phi : \uhp(K) \to L$ with $\phi(U_t) \neq 0$ that occur in the localisation 
		$
		\hc{*}\big(S_K,\D_{\cU}^Q\big)_\lambda,
		$
		where $\cU$ is any sufficiently small open affinoid containing $\lambda$.	
	\end{itemize}
\end{theorem}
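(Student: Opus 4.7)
The strategy is to directly apply the eigenvariety machine of \cite[Thm.\ 4.2.2]{Han17} to the eigenvariety datum $(\Wlam, \sZ^Q, \sM^*, \cH(K), \psi)$ constructed immediately above the theorem. All of the ingredients have been set up: $\sZ^Q$ is a Fredholm hypersurface admitting a canonical admissible cover by the slope-adapted affinoids $\sZ_{\cU,h}^Q$, $\sM^*$ is a coherent sheaf on $\sZ^Q$ whose sections over $\sZ_{\cU,h}^Q$ recover the finite-slope overconvergent cohomology groups, and $\psi$ records the Hecke action. The machine then produces a separated rigid space $\cE_{\lambda_0}^{Q,*}$ together with a finite morphism $\cE_{\lambda_0}^{Q,*} \to \sZ^Q$; composing with projection yields the weight map $w$.

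On the local pieces, $\cE_{\lambda_0}^{Q,*}$ restricted to the preimage of $\sZ_{\cU,h}^Q$ is exactly the local eigenvariety $\cE_{\cU,h}^{Q,*} = \mathrm{Sp}(\bT_{\cU,h}^{Q,*})$ of the previous subsection, since the machine is built by taking relative spectra of the Hecke subalgebras of endomorphisms of $\sM^*$. The admissibility of the cover by slope-adapted pairs then guarantees that these local pieces glue, and separatedness follows formally from the closed immersion $\cE_{\lambda_0}^{Q,*} \hookrightarrow \sZ^Q \times \mathbf{A}^1_{\mathrm{Spec}(\cH(K))}$ furnished by the construction.

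It remains to establish the bijection on $L$-points lying above a weight $\lambda$. Given $x \in \cE_{\lambda_0}^{Q,*}(L)$ above $\lambda$, choose any slope-adapted $(\cU,h)$ with $\lambda \in \cU$ and $x$ lying in $\cE_{\cU,h}^{Q,*}$; Proposition \ref{prop:local eigenvariety} gives a system $\phi$ occurring in $\hc{*}(S_K,\D_{\cU}^Q)_\lambda^{\leq h}$, which in particular appears in $\hc{*}(S_K,\D_{\cU}^Q)_\lambda$ and has $v_p(\phi(U_t)) \leq h < \infty$, so $\phi(U_t) \neq 0$. Conversely, given $\phi$ of finite slope $h = v_p(\phi(U_t))$ occurring in $\hc{*}(S_K,\D_{\cU}^Q)_\lambda$ for some sufficiently small $\cU \ni \lambda$, shrink $\cU$ if necessary so that $(\cU,h)$ is slope-adapted (using Proposition \ref{prop:slopes}); then $\phi$ appears in $\hc{*}(S_K,\D_{\cU}^Q)^{\leq h}$ localised at $\m_\lambda$, and Proposition \ref{prop:local eigenvariety} produces a point $x(\phi) \in \cE_{\cU,h}^{Q,*}(L) \subset \cE_{\lambda_0}^{Q,*}(L)$ above $\lambda$. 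The two assignments are mutually inverse, and independence from the choice of $\cU$ (for fixed $\phi$) follows from the compatibility of slope-$\leq h$ subspaces under further restriction of the affinoid, which is a standard consequence of the base change properties of $\D_{\cU}^Q$ already used in Proposition \ref{prop:slopes}.

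The only conceptual step not already in place is the check that the eigenvariety datum formalism of \cite{Han17} applies verbatim to parahoric distributions; this reduces to the formal properties of $\sM^*$ (coherence, compatibility with restriction of affinoids) which are inherited from the corresponding properties of $\D_{\cU}^Q$, and so there is no serious obstacle.
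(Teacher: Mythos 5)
Your proposal is correct and follows essentially the same route as the paper: assemble the eigenvariety datum $(\Wlam,\sZ^Q,\sM^*,\cH(K),\psi)$ and feed it into Hansen's eigenvariety machine \cite[Thm.\ 4.2.2]{Han17}, observing that the formal properties of parahoric distributions suffice for the machine to apply, and that Proposition \ref{prop:local eigenvariety} supplies the local description of points. One minor imprecision: separatedness is not best justified by a closed immersion into ``$\sZ^Q\times\mathbf{A}^1_{\mathrm{Spec}(\cH(K))}$'' (which is not a rigid space when $\cH(K)$ is not of finite type); rather, it follows because the machine realises $\cE_{\lambda_0}^{Q,*}$ as finite over the separated Fredholm hypersurface $\sZ^Q$, but this is a cosmetic point and the conclusion is the same.
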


\begin{remark}\label{rem:coherent}
Additionally, during the construction one obtains a canonical coherent sheaf $\sM$ on $\cE_{\lambda_0}^{Q,*}$ that interpolates $Q$-finite slope eigenspaces in the spaces $\hc{*}(S_K,\D_\cU^Q)$.
 \end{remark}

\begin{remark}\label{rem:tame level}
	If $K = K^pK_p$ with $K^p \subset \cG(\A_f^{(p)})$, then the eigenvariety of level $K$ depends only on the `tame level at $Q$', that is, $K^p$ and $K_p\cap L_Q(\Zp)$. Indeed, let $t \in T_Q^{++}$ such that $v_p(\alpha(t)) = 0$ for all $\alpha \in \Delta_Q$. This condition ensures conjugation by $t^{-1}$ preserves $L_Q(\Zp)$, whilst by Proposition \ref{prop:Q controlling}, it increases (resp.\ decreases) the $p$-adic valuation of all non-trivial entries of $n^- \in N_Q^-(\Zp)$ (resp.\ $n \in N_Q(\Zp)$). Thus if $K'_p \subset K_p$ with $K'_p \cap L_Q(\Zp) = K_p \cap L_Q(\Zp)$,  then necessarily there exists an integer $r$ such that 
	\begin{equation}\label{eq:smaller level}
		t^{r}K_p t^{-r} \cap K_p \subset K_p'.
	\end{equation}
	Let $K' = K^pK_p'$, and $\mathrm{res}_{K'}^{K}$ denote restriction from level $K$ to $K'$ on cohomology. Then \eqref{eq:smaller level} ensures that $U_t^r = [Kt^rK] = [K't^r K] \circ \mathrm{res}_{K'}^K$, and $(U_t')^r \defeq [K't^rK'] = \mathrm{res}_{K'}^K\circ [K't^r K]$. In particular, the $Q$-finite slope eigensystems at level $K$ and $K'$ are the same, and the eigenvarieties at level $K$ and $K'$ are the same. (In fact, this is further true if $K$ and $K'$ have the same intersection with $\cG(\A_f^{(p)}) \times L_Q(\Zp)^{\mathrm{der}}$, since the centre acts trivially on the coefficient modules).
\end{remark}

\subsection{Hansen's spectral sequences}
To study the geometry of these local pieces, we use spectral sequences introduced by Hansen in \cite[\S3.3]{Han17}. From the construction, it can be seen that the only input required to define these spectral sequences is a theory of distributions that behaves well with respect to base-change of the weight. But all of the foundational results for fully overconvergent distributions given and used in \cite{Han17} hold for parahoric distributions via exactly the same proofs, so the construction of the spectral sequences also carries over, and we conclude:
\begin{proposition}\label{prop:spectral}
	Fix a slope adapted pair $(\cU,h)$ with $\cU \subset \Wlam$, and let $\Sigma \subset \cU$ be an arbitrary Zariski-closed subspace. Then $\hc{\bullet}(S_K,\D_{\Sigma}^Q)$ admits a slope $\leq h$ decomposition, and there is a convergent first quadrant spectral sequence
	\[
	E_2^{i,j} = \mathrm{Ext}^i_{\cO(\cU)}\left(\h_j^{\mathrm{BM}}\big(S_K,\A_{\cU}^Q\big)^{\leq h}, \cO(\Sigma)\right) \implies \hc{i+j}\big(S_K,\D_{\Sigma}^Q\big)^{\leq h},
	\]
	and a convergent second quadrant spectral sequence
	\[
	E_{2}^{i,j} = \mathrm{Tor}_{-i}^{\cO(\cU)}\left(\hc{j}\big(S_K,\D_{\cU}^Q\big)^{\leq h}, \cO(\Sigma)\right) \implies \hc{i+j}\big(S_K,\D_{\Sigma}^Q\big)^{\leq h}.
	\]
	There are analogous spectral sequences replacing compactly supported cohomology and Borel--Moore homology with singular cohomology and singular homology, and -- considering the Borel--Serre compactification of $S_K$ -- with boundary cohomology and boundary homology. These spectral sequences are all equivariant for the action of $\uhp(K)$ on their $E_2$ pages and abutments.
\end{proposition}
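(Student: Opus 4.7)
The strategy is to follow Hansen's arguments from \cite[\S3.3]{Han17} essentially verbatim, the key point being that his construction depends only on formal properties of the distribution modules, all of which we have already established for the parahoric analogues. Specifically, the required inputs are: (i) the Fr\'echet module $\D_{\cU}^Q$ is the inverse limit of ONable Banach modules $\widetilde{\D}_{\cU,s}^Q$ with compact transition maps (Lemmas \ref{lem:j_s}, \ref{lem:compact frechet}); (ii) slope decompositions exist at each finite level of analyticity, are compatible as $s$ varies, and pass to the limit (Proposition \ref{prop:slopes}); and (iii) base-change along the surjection $\cO(\cU) \twoheadrightarrow \cO(\Sigma)$ is well-behaved, i.e.\ $\widetilde{\D}_{\cU,s}^Q \otimes_{\cO(\cU)} \cO(\Sigma) \cong \widetilde{\D}_{\Sigma,s}^Q$ essentially by the definition of the former as a completed tensor product.

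First I would set up the underlying chain complexes. Using a fixed finite triangulation of the Borel--Serre compactification, one builds complexes $C_{\mathrm{c}}^\bullet(K, \D_{\cU}^Q)$ and $C^{\mathrm{BM}}_\bullet(K, \A_{\cU}^Q)$ of $\cO(\cU)$-modules (and parallel complexes for singular (co)homology and for the Borel--Serre boundary), carrying a $\uhp(K)$-action under which $U_t$ acts compactly via Lemma \ref{lem:compact}. Taking the slope-$\leq h$ parts yields perfect complexes of $\cO(\cU)$-modules, quasi-isomorphic to bounded complexes of finite projective modules — this is standard once the $U_t$-factorisation through compact transition maps is in place. The claimed slope decomposition of $\hc{\bullet}(S_K,\D_{\Sigma}^Q)$ then follows by specialising: one checks the quasi-isomorphism
\[
	C_{\mathrm{c}}^\bullet(K, \D_{\Sigma}^Q)^{\leq h} \ \simeq\  C_{\mathrm{c}}^\bullet(K, \D_{\cU}^Q)^{\leq h} \otimes_{\cO(\cU)} \cO(\Sigma),
\]
using (iii) above together with the exactness of slope decomposition.

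The two spectral sequences are then formal consequences. For the Tor sequence, apply the derived base change $(-)\otimes^{L}_{\cO(\cU)} \cO(\Sigma)$ to the perfect complex $C_{\mathrm{c}}^\bullet(K,\D_{\cU}^Q)^{\leq h}$; the resulting hyper-Tor spectral sequence takes exactly the stated form. For the Ext sequence, invoke the duality $C_{\mathrm{c}}^\bullet(K, \D_{\diamond}^Q) \cong \mathrm{Hom}_{\cO(\diamond)}\bigl(C^{\mathrm{BM}}_\bullet(K,\A_{\diamond}^Q), \cO(\diamond)\bigr)$ between the parahoric cochain and chain complexes to obtain a quasi-isomorphism
\[
	C_{\mathrm{c}}^\bullet(K, \D_{\Sigma}^Q)^{\leq h} \ \simeq\  \mathrm{RHom}_{\cO(\Sigma)}\bigl( C^{\mathrm{BM}}_\bullet(K,\A_{\cU}^Q)^{\leq h} \otimes^{L}_{\cO(\cU)} \cO(\Sigma),\ \cO(\Sigma)\bigr),
\]
and then read off the hyper-Ext spectral sequence. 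The singular and boundary variants are identical, using the analogous complexes attached to $S_K$ and to the Borel--Serre boundary. Hecke-equivariance is automatic from naturality of all the constructions.

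The only genuine obstacle is verifying that Hansen's duality identification between compactly supported cochains with $\D^Q_{\cU}$-coefficients and Borel--Moore chains with $\A^Q_{\cU}$-coefficients continues to hold in the parahoric setting. But since $\A^Q_{\cU}$ and $\D^Q_{\cU}$ are dual to each other in precisely the sense used by Hansen, and since compactness/ONability have been verified in \S\ref{sec:parabolic distributions}, the argument transcribes with no modification beyond prefixing every instance of `overconvergent' with `parahoric'; no new ideas are required.
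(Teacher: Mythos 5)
Your proposal takes essentially the same approach as the paper: both observe that Hansen's construction of these spectral sequences relies only on formal properties of distributions -- compactness of transition maps, ONability, and good behaviour under base change of weight -- and that all of these have been established for the parahoric modules, so the argument from \cite[\S3.3]{Han17} transcribes directly. The paper gives only a one-paragraph justification in this spirit before stating the proposition, so you have if anything spelled out more of the underlying mechanism (the perfect-complex lift, derived base change for Tor, and the universal-coefficients duality for Ext) than the paper does.
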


Many of the general consequences Hansen and Newton obtain from studying these spectral sequences also carry over, with identical proofs, to the context of parabolic eigenvarieties. We highlight some of these, referencing only the relevant equivalent statement in \cite{Han17}.

\subsubsection{Points in eigenvarieties and classical eigensystems}

\begin{proposition}\label{prop:U to lambda}\cite[Thm.\ 4.3.3]{Han17}.
	For total cohomology $* = \bullet$, there is a bijection between:
	\begin{itemize}\setlength{\itemsep}{0pt}
		\item $L$-points $x = x(\phi)$ of $\cE_{\lambda_0}^{Q,\bullet}$ lying above a weight $\lambda \in \Wlam$, and
		\item systems of Hecke eigenvalues $\phi : \uhp(K) \to L$ with $\phi(U_t) \neq 0$ that occur in $\hc{\bullet}(S_K,\D_{\lambda}^Q(L))$.
	\end{itemize}
	Thus if $\tilde\pi$ is a $Q$-non-critical $p$-refined automorphic representation of weight $\lambda \in \Wlam$, with Hecke eigensystem $\phi_{\tilde\pi}$ with $\phi_{\tilde\pi}(U_t) \leq h$, then there is a point $x_{\tilde\pi}$ in $\cE_{\lambda_0}^{Q,\bullet}$ corresponding to $\tilde\pi$.
\end{proposition}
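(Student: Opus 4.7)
The strategy is to reduce to the local-piece description (Proposition~\ref{prop:local eigenvariety}) via Hansen's spectral sequences. Since $\cE_{\lambda_0}^{Q,\bullet}$ is glued from the local pieces $\cE_{\cU,h}^{Q,\bullet}$ over slope-adapted pairs $(\cU,h)$, an $L$-point of $\cE_{\lambda_0}^{Q,\bullet}$ above $\lambda$ is the same as an $L$-point of some local piece above $\lambda$, for an affinoid $\cU \ni \lambda$ and some $h \geq v_p(\phi(U_t))$. By Proposition~\ref{prop:local eigenvariety}, such points correspond bijectively to Hecke eigensystems $\phi$ in the localisation $\hc{\bullet}(S_K,\D_{\cU}^Q)^{\leq h}_\lambda$. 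The substantive content of the theorem is therefore the equivalence
\[
\phi \text{ occurs in } \hc{\bullet}(S_K,\D_{\cU}^Q)^{\leq h}_\lambda \ \iff\  \phi \text{ occurs in } \hc{\bullet}(S_K,\D_{\lambda}^Q)^{\leq h},
\]
together with the observation that any finite-slope $\phi$ automatically lies in the $\leq h$ part as soon as $h \geq v_p(\phi(U_t))$.

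To prove this equivalence I would apply the second-quadrant spectral sequence of Proposition~\ref{prop:spectral} with $\Sigma = \{\lambda\}$:
\[
E_2^{i,j} = \mathrm{Tor}_{-i}^{\cO(\cU)}\!\left(\hc{j}(S_K,\D_\cU^Q)^{\leq h},\, L\right) \ \Longrightarrow\  \hc{i+j}(S_K,\D_\lambda^Q)^{\leq h}.
\]
Tor over $\cO(\cU)$ vanishes on modules supported away from $\m_\lambda$, so the Hecke eigensystems appearing anywhere on the $E_2$-page are precisely those in $\hc{\bullet}(S_K,\D_\cU^Q)^{\leq h}_\lambda$. Localisation at the maximal ideal of $\uhp(K)\otimes L$ corresponding to $\phi$ is exact, so it produces a convergent spectral sequence of $\phi$-generalised eigenspaces; using the finite projective dimension of $L$ over $\cO(\cU)_{\m_\lambda}$ (a regular local ring) together with the compatibility of the spectral sequences with passing to total cohomology, one extracts the claimed equivalence exactly as in the proof of \cite[Thm.\ 4.3.3]{Han17}. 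Our spectral sequences are formally identical to Hansen's -- the only difference being the coefficient modules $\D_\cU^Q$ and $\D_\lambda^Q$ -- so the argument transfers without modification.

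The second assertion is then immediate. A $Q$-non-critical $\tilde\pi$ gives, by the standard construction of cuspidal cohomology classes, a non-zero $\phi_{\tilde\pi}$-generalised eigenspace in $\hc{\bullet}(S_K, V_\lambda^{G,\vee})$; by Definition~\ref{def:Q non-critical} this is isomorphic to the $\phi_{\tilde\pi}$-generalised eigenspace in $\hc{\bullet}(S_K,\D_\lambda^Q)$, which is therefore non-zero. Combined with the finite-slope hypothesis $\phi_{\tilde\pi}(U_t) \ne 0$, the first part of the proposition yields the desired point $x_{\tilde\pi} \in \cE_{\lambda_0}^{Q,\bullet}$.

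I expect the main obstacle to be the harder direction of the spectral-sequence equivalence: showing that non-vanishing of the $\phi$-localised $E_2$-page forces non-vanishing of the $\phi$-localised abutment, since \emph{a priori} a $\phi$-eigensystem on $E_2$ could be entirely cancelled by successive differentials. Hansen's argument handles this using the regularity of $\cO(\cU)_{\m_\lambda}$ and a careful analysis of the localised spectral sequence on total cohomology; the same argument applies verbatim in our parahoric setting, and is the technical heart of the proof.
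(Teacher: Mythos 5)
Your reconstruction is sound and follows the route the paper intends: the paper gives no proof at all here, instead citing Hansen's Theorem~4.3.3 directly, with the implicit understanding that the parahoric spectral sequences of Proposition~\ref{prop:spectral} are formally identical to Hansen's and so his argument transfers verbatim. You have correctly identified the reduction to Proposition~\ref{prop:local eigenvariety}, the role of the Tor spectral sequence with $\Sigma = \{\lambda\}$, and the fact that the substantive content is the equivalence between $\phi$ occurring in $\hc{\bullet}(S_K,\D_\cU^Q)^{\leq h}_\lambda$ and in $\hc{\bullet}(S_K,\D_\lambda^Q)^{\leq h}$.

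One small mischaracterisation worth flagging: the ``hard direction'' (non-vanishing of the $\phi$-localised $E_2$-page forcing non-vanishing of the $\phi$-localised abutment) is not really handled by regularity of $\cO(\cU)_{\m_\lambda}$. The actual trick is a maximality-plus-Nakayama argument: take $j$ maximal with $\hc{j}(S_K,\D_\cU^Q)^{\leq h}_\phi \neq 0$. By Nakayama (these localisations are finite $\cO(\cU)_{\m_\lambda}$-modules) $E_2^{0,j}_\phi = \hc{j}(S_K,\D_\cU^Q)^{\leq h}_\phi \otimes_{\cO(\cU)} L \neq 0$. In the second-quadrant complex the outgoing differentials $E_r^{0,j} \to E_r^{r,j-r+1}$ land in vanishing positions ($r\ge 2 > 0$), and the incoming differentials come from $E_r^{-r,j+r-1}$, which involves $\Tor_r$ of $\hc{j+r-1}_\phi$; this vanishes by maximality of $j$. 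So $E_\infty^{0,j}_\phi = E_2^{0,j}_\phi \neq 0$, and $\phi$ appears in degree~$j$ of the abutment. This is exactly why the claim needs total cohomology: the maximality argument pins down an unspecified degree, which is also why the single-degree version (Proposition~\ref{prop:U to lambda d}) only goes one way. Your treatment of the second assertion (producing $x_{\tilde\pi}$ from a $Q$-non-critical $\tilde\pi$) is correct.
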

In particular, the coherent sheaf $\sM$ of Remark \ref{rem:coherent} actually interpolates $Q$-finite slope eigenspaces in $\hc{\bullet}(S_K,\D_\lambda^Q)$, and hence -- via Theorem \ref{thm:Q-classicality} -- $Q$-non-critical eigenspaces in classical cohomology. We also have a partial analogue of this when we work in a single degree $* = d\in \Z_{\geq 0}$:

	\begin{proposition} \label{prop:U to lambda d} 
		If $x = x(\phi) \in \cE_{\lambda_0}^{Q,d}(L)$ lies above $\lambda \in \cW_{\lambda_0}^Q$, then $\phi$ occurs in $\hc{d}(S_K,\D_{\lambda}^Q(L))$.
	\end{proposition}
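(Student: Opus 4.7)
The plan is to proceed by induction on $r := \dim \cU$, where $(\cU, h)$ is a slope-adapted pair containing $\lambda$ such that $x = x(\phi)$ is realised in the local piece $\cE^{Q,d}_{\cU, h}$. By Proposition \ref{prop:local eigenvariety}, setting $M := \hc{d}(S_K, \D_\cU^Q)^{\leq h}$, the $\phi$-localisation $M_\phi$ is non-zero, and the maximal ideal $\m_\phi \subset \bT^{Q,d}_{\cU, h}$ contracts to $\m_\lambda$ in $\cO(\cU)$. When $r = 0$ one has $\cU = \{\lambda\}$ and the conclusion is immediate, so suppose $r \geq 1$.

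For the inductive step I would choose a non-zerodivisor $f \in \m_\lambda$---for example, a member of a regular system of parameters at $\lambda$, which exists because $\cW^Q_{\lambda_0}$ is smooth---and set $\cU' := V(f) \subset \cU$. Applying the short exact sequence $0 \to \cO(\cU) \xrightarrow{f} \cO(\cU) \to \cO(\cU') \to 0$ to the perfect slope-$\leq h$ chain complex of flat $\cO(\cU)$-modules that underlies $\hc{\bullet}(S_K, \D_\cU^Q)^{\leq h}$ (implicit in the construction behind Proposition \ref{prop:spectral}) produces a short exact sequence of chain complexes, hence a long exact sequence in cohomology. The segment
\[
\hc{d}\big(S_K, \D_\cU^Q\big)^{\leq h} \xrightarrow{f} \hc{d}\big(S_K, \D_\cU^Q\big)^{\leq h} \longrightarrow \hc{d}\big(S_K, \D_{\cU'}^Q\big)^{\leq h}
\]
yields an injection $M/fM \hookrightarrow \hc{d}(S_K, \D_{\cU'}^Q)^{\leq h}$. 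Localising at $\phi$, since $M_\phi$ is a non-zero finitely generated module over the local ring $(\bT^{Q,d}_{\cU, h})_{\m_\phi}$ and $f \in \m_\lambda \subset \m_\phi$, Nakayama's lemma forces $M_\phi / f M_\phi \neq 0$. Hence $\phi$ occurs in $\hc{d}(S_K, \D_{\cU'}^Q)^{\leq h}$. Iterating $r$ times, with a fresh parameter from $\m_\lambda$ at each stage, reduces to $\cU' = \{\lambda\}$, where the statement becomes $\phi$ occurs in $\hc{d}(S_K, \D_\lambda^Q)^{\leq h} \subset \hc{d}(S_K, \D_\lambda^Q)$, as required.

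The main obstacle is the initial step: extracting an honest short exact sequence on slope-$\leq h$ chain complexes from the multiplication-by-$f$ sequence on distribution modules. This requires that the slope-$\leq h$ truncation be built from a perfect complex of projective (hence $\cO(\cU)$-flat) modules that base-changes correctly along $\cO(\cU) \twoheadrightarrow \cO(\cU')$; this is implicit in the Hansen-style formalism of Proposition \ref{prop:spectral}. An alternative that avoids explicit perfect complexes is to apply the Tor spectral sequence of Proposition \ref{prop:spectral} directly with $\Sigma = V(f)$ and track the $\phi$-part through the edge map at $(0, d)$, but the dévissage via repeated Nakayama seems cleanest.
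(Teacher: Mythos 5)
Your proof is correct, and it accomplishes the same thing as the paper's argument (base change to the fibre combined with a Nakayama-type vanishing criterion), but by a different route. The paper invokes the Tor spectral sequence of Proposition \ref{prop:spectral} once, with $\Sigma = \{\lambda\}$: localising at $\phi$ and reducing modulo $\m_\lambda$, one sees $\phi$ occurs in $E_2^{0,d} = \hc{d}(S_K,\D_\cU^Q)^{\leq h}\otimes_{\cO(\cU)}\cO(\cU)/\m_\lambda$, and then uses the edge map to land in $\hc{d}(S_K,\D_\lambda^Q(L))^{\leq h}$. You instead unwind this into a d\'evissage, cutting $\cU$ down by one regular parameter $f \in \m_\lambda$ at a time: the short exact sequence $0 \to C^\bullet \xrightarrow{f} C^\bullet \to C^\bullet/f \to 0$ on the perfect slope-$\leq h$ complex yields, via the long exact sequence, an injection $M/fM \hookrightarrow \hc{d}(S_K,\D_{V(f)}^Q)^{\leq h}$, and Nakayama's lemma (using $f \in \m_\lambda \subset \m_\phi$) keeps the $\phi$-part nonzero at each stage. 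This is a legitimate and in fact rather transparent way to run the argument: at each one-dimensional step the relevant ``Tor spectral sequence'' collapses to a universal-coefficient short exact sequence, so the injectivity of the edge map into $\hc{d}$ is manifest rather than requiring any thought about higher differentials. The paper's single application of the full Tor spectral sequence is shorter on the page, but your version makes the mechanism more explicit. Your caveat about needing a perfect complex of flat $\cO(\cU)$-modules is well-placed but harmless: the slope-$\leq h$ complex is indeed a bounded complex of finite projective $\cO(\cU)$-modules, precisely the input behind Proposition \ref{prop:spectral}, so multiplication by a non-zerodivisor $f$ is injective on it and the base-change $C^\bullet/f \cong C^\bullet \otimes_{\cO(\cU)} \cO(V(f))$ computes $\hc{\bullet}(S_K,\D_{V(f)}^Q)^{\leq h}$. (One can always shrink $\cU$ so that the chosen regular sequence cuts out exactly $\{\lambda\}$ at the end, or else note that the $\phi$-localisation is in any case supported over $\m_\lambda$.)
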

	\begin{proof}
		Let $\cU$ be a neighbourhood of $\lambda$. By Proposition \ref{prop:local eigenvariety}, $\phi$ occurs in $\hc{d}(S_K,\D_{\cU}^Q)_\lambda$. We localise the Tor spectral sequence (with $\Sigma = \lambda$) at $\phi$. Since $\phi(U_t) \neq 0$, reducing modulo $\m_{\lambda}$ we see $\phi$ occurs in $E_2^{0,d} = \hc{d}(S_K,\D_{\cU}^Q)\otimes_{\cO(\cU)}\cO(\cU)/\m_{\lambda}.$ Hence it occurs in the abutment $E_\infty^{0,d} = \hc{d}(S_K,\D_{\lambda}^Q(L))$.
	\end{proof}

\begin{remark}
	The converse to Proposition \ref{prop:U to lambda d} is \emph{false}. For example, let $\cG = \GL_2/\Q$, $Q=B$, and $K = K_0(p)$. Then as explained in \cite[Thm.\ 7.1]{PS12} and \cite[Thm.\ 3.30]{Bel12}, there is a weight 2 critical Eisenstein series $E_2^{\mathrm{crit}}$ whose eigensystem $\epsilon$ occurs in $\hc{1}(S_K,\D_{(0,0)}^B)$ but not in $M_2^\dagger(K)$, hence not in the Coleman--Mazur eigencurve (denoted $\cE_{(0,0)}^{B,1}$ here). In particular $\epsilon$ does not occur in $\hc{1}(S_K,\D_{\cU}^B)$. It does appear in $\hc{2}(S_K,\D_{\cU}^B)$ and hence $\cE_{(0,0)}^{B,\bullet}$, consistent with Proposition \ref{prop:U to lambda}.
\end{remark}

\subsubsection{The dimension of components}
 Let $\tilde\pi$ be a $p$-refined automorphic representation and suppose there is an attached point $x_{\tilde\pi} \in \cE_{\lambda_0}^{Q,*}$. A \emph{$p$-adic family through $\tilde\pi$} is a positive-dimensional component of $\cE_{\lambda_0}^{Q,*}$ through $x_{\tilde\pi}$. It is not obvious that such a component exists. 
In the case of total cohomology, however, we can exhibit lower bounds on the dimensions of components.

\begin{definition}\label{def:defect}
	Let $x = x(\phi)$ be a point of $\cE_{\lambda_0}^{Q,\bullet}$, and let $t_Q(x)$ (resp.\ $b_Q(x)$) denote the supremum (resp. infemum) of the set $\{ i \in \Z : \hc{i}(S_K,\D^Q_\lambda(L))_\phi \neq 0\}$. Define the \emph{overconvergent defect at $Q$} to be $\ell_Q(x) = t_Q(x) - b_Q(x)$.
\end{definition}

The following proposition is proved exactly as in Newton's proof of \cite[Prop.\ B.1]{Han17}.

\begin{proposition}\label{prop:dimension}
	Any irreducible component of $\cE_{\lambda_0}^{Q,\bullet}$ containing a given point $x$ has dimension at least $\dim \cW_{\lambda_0}^Q - \ell_Q(x)$.
\end{proposition}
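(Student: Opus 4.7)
The plan is to adapt Newton's proof of \cite[Prop.~B.1]{Han17} to the parahoric setting, replacing Hansen's Iwahoric spectral sequences with those of Proposition \ref{prop:spectral}. First I will set up the local picture. Fix a slope-adapted pair $(\cU,h)$ with $x \in \cE_{\cU,h}^{Q,\bullet}$, and let $\lambda = w(x)$, $n = \dim \cW_{\lambda_0}^Q$, and $R = \cO(\cU)_{\mathfrak{m}_\lambda}$. Since $\cW_{\lambda_0}^Q$ is smooth (it is an open polydisc at $\lambda$), $R$ is regular local of dimension $n$, and the structure map localises to a finite map $R \to \bT$, where $\bT = (\bT_{\cU,h}^{Q,\bullet})_{\mathfrak{m}_x}$. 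An irreducible component $Z$ through $x$ of dimension $D$ corresponds to a minimal prime $\mathfrak{p} \subset \bT$ with $\dim \bT/\mathfrak{p} = D$; by finiteness its contraction $\mathfrak{q} = \mathfrak{p} \cap R$ has $\mathrm{ht}(\mathfrak{q}) = n - D$.

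Next I would localise the Tor spectral sequence of Proposition \ref{prop:spectral} (with $\Sigma = \{\lambda\}$) at $\phi = \phi_x$. Writing $N_j \defeq \hc{j}(S_K, \D_\cU^Q)^{\leq h}_\phi$ as a finitely generated $R$-module, this yields a convergent second-quadrant spectral sequence of finite-dimensional $L$-vector spaces
\[
E_2^{-p,j} = \mathrm{Tor}^R_p(N_j, L)\ \Longrightarrow\ \hc{j-p}\big(S_K, \D_\lambda^Q(L)\big)_\phi,
\]
whose abutment is concentrated in degrees $[b_Q(x), t_Q(x)]$. The key commutative-algebra input is as follows: since $\bT$ acts faithfully on $\bigoplus_j N_j$, the generic point $\mathfrak{q}$ of the image $w(Z) \subset \cU$ lies as a minimal prime in the support of some $N_{j_0}$. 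Localising at $\mathfrak{q}$ produces a non-zero $\mathfrak{q} R_\mathfrak{q}$-primary module $N_{j_0,\mathfrak{q}}$ over the regular local ring $R_\mathfrak{q}$ of dimension $n - D$; by Auslander--Buchsbaum its projective dimension equals $n - D$, and since projective dimension is non-increasing under localisation, $\mathrm{pd}_R(N_{j_0}) \geq n - D$. Hence $\mathrm{Tor}^R_{i_0}(N_{j_0}, L) \neq 0$ for some $i_0 \geq n - D$, producing a non-zero entry $E_2^{-i_0, j_0}$.

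The final step is to extract from these non-vanishings that the total-degree support of the abutment has length at least $n - D$. The idea is to identify extremal bidegrees of the $E_2$-page that persist unchanged to $E_\infty$ for formal reasons. The clean corner is at $(0, j_{\max})$ with $j_{\max} = \max\{j : N_j \neq 0\}$: differentials out of $(0, j_{\max})$ target bidegrees outside the second quadrant, and differentials into $(0, j_{\max})$ have sources of the form $\mathrm{Tor}^R_r(N_{j_{\max}+r-1}, L) = 0$ by the maximality of $j_{\max}$. Hence $E_\infty^{0, j_{\max}} = N_{j_{\max}} \otimes_R L \neq 0$ by Nakayama, yielding $t_Q(x) \geq j_{\max}$. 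A parallel but subtler analysis at the opposite extreme, taking $i_0$ globally maximal with $\mathrm{Tor}^R_{i_0}(N_j, L) \neq 0$ for some $j$ and then choosing $j^*$ extremally within that row, produces a surviving non-zero corner $(-i_0, j^*)$ contributing to the abutment in total degree $j^* - i_0$. Since $j^* \leq j_{\max}$ and $i_0 \geq n - D$, combining these inequalities gives $\ell_Q(x) = t_Q(x) - b_Q(x) \geq j_{\max} - (j^* - i_0) \geq i_0 \geq n - D$, which is the desired inequality $D \geq n - \ell_Q(x)$.

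The main obstacle is precisely this last step, specifically the analysis at the ``left-hand'' extremal corner: incoming differentials vanish by maximality of $i_0$, but outgoing differentials can \emph{a priori} kill the non-vanishing term, and ruling this out requires careful bookkeeping of targets (controlled by an appropriate extremal choice of $j^*$ within the row $p = i_0$). This is exactly where Newton's combinatorial analysis in \cite[App.~B]{Han17} is concentrated; since Proposition \ref{prop:spectral} has the same formal shape as Hansen's Iwahoric spectral sequences, that analysis transfers to our parahoric setting verbatim.
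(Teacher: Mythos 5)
Your approach matches the paper's, which simply cites Newton's Proposition B.1 of \cite{Han17} and asserts that the proof carries over; you go further and sketch the argument. The local-ring setup, the finiteness of the localised Hecke algebra $\bT$ over $R = \cO(\cU)_{\mathfrak{m}_\lambda}$, and the Auslander--Buchsbaum plus localisation step are all correctly reproduced, and the observation that $t_Q(x) = j_{\max}$ survives to $E_\infty$ is clean. However, the final step as you sketch it has two genuine problems.

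First, the claim that $\mathfrak{q} = \mathfrak{p}\cap R$ is a minimal prime of $\mathrm{Supp}(N_{j_0})$ for some $j_0$ requires more care. Faithfulness gives $\mathrm{Supp}_R(\bigoplus_j N_j) = V(\ker(R\to\bT))$, whose minimal primes are the minimal elements of $\{\mathfrak{p}'\cap R : \mathfrak{p}' \text{ minimal in }\bT\}$; but going-down need not hold for a finite extension $R/\ker \hookrightarrow \bT$, so there is no \emph{a priori} reason for $\mathfrak{q}$ to be minimal among these contractions, and when it is not, $N_{j_0,\mathfrak{q}}$ need not have finite length and the Auslander--Buchsbaum step does not apply as stated. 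This must be addressed.

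Second, and more seriously, the ``surviving nonzero corner'' step does not work at the level of formal spectral-sequence combinatorics, no matter how $j^*$ is chosen within the row $p = i_0$. Maximality of $i_0$ kills incoming differentials, but the outgoing $d_r \colon E_r^{-i_0, j^*} \to E_r^{-i_0 + r, j^* - r + 1}$ can perfectly well be nonzero for \emph{any} extremal choice of $j^*$, and nothing in the shape of the $E_2$-page rules this out. There is no ``careful bookkeeping of targets'' that closes this, because the gap is not combinatorial: the needed fact uses that the $N_j$ are not arbitrary modules fitting into a Tor spectral sequence, but are the cohomology of a \emph{perfect complex} $C^\bullet$ of $R$-modules (the localised slope-$\leq h$ Borel--Moore chain complex). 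Replacing $C^\bullet$ by a minimal complex of finite free $R$-modules, one sees that it is concentrated exactly in the Tor-amplitude $[b_Q(x), t_Q(x)]$ since the residual complex $C^\bullet\otimes_R L$ has zero differentials; after localising at $\mathfrak{q}$, one has a nonzero perfect complex over the regular local ring $R_\mathfrak{q}$ of dimension $n-D$ with finite-length cohomology, and the New Intersection Theorem (equivalently, Auslander--Buchsbaum applied to the minimal representative) gives $t_Q(x) - b_Q(x) \geq n - D$ directly, bypassing the spectral sequence altogether. I recommend replacing the last paragraph with this perfect-complex argument, or reproducing Newton's proof faithfully rather than gesturing at it.
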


\subsection{Parabolic families of cuspidal automorphic representations}\label{sec:cuspidal families}
We now consider the construction of $p$-adic families of cuspidal automorphic representations. Let $x = x(\phi) \in \cE_{\lambda_0}^{Q,*}(L)$ be a point of classical (dominant) weight $\lambda = w(x)$, corresponding to a system of eigenvalues $\phi : \cH(K)\to L$.  We require some further technical conditions:

\begin{definition}
	\begin{enumerate}\setlength{\itemsep}{0pt}
		\item We say $\phi$ is \emph{interior} if $\h^\bullet_{\partial}(S_K,V_\lambda^\vee(L))_\phi =0$ (boundary cohomology for the Borel--Serre compactification of $S_K$), and \emph{$Q$-strongly interior} if $\h^\bullet_\partial(S_K,\D_\lambda^Q(L))_\phi = 0$. 
		\item We say $\lambda$ is \emph{regular} if  $\langle \lambda, \alpha\rangle > 0$ for all simple roots $\alpha$ of $G$. (For $\GL_n$, this means that $\lambda = (\lambda_1,...,\lambda_n)$ with $\lambda_i > \lambda_{i+1}$ for all $i$).
		\item We say $x$ is a \emph{classical cuspidal point} if there exists a cuspidal automorphic representation $\pi_x$ such that $\phi$ occurs in $\pi_x^K$ (after appropriate renormalisations as in \S\ref{sec:warning}).
	\end{enumerate}
\end{definition}

\begin{lemma}\label{lem:strongly interior}
	If $\phi$ is $Q$-non-critical slope, then it is $Q$-strongly interior if and only if it is interior. This is true more generally if $\phi$ is $Q$-non-critical for both $\hc{\bullet}$ and $\h^\bullet$ (Remark \ref{rem:both non-critical}).
\end{lemma}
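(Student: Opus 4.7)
The plan is to compare boundary cohomology with overconvergent and classical coefficients via the five lemma applied to the long exact sequence of the pair (interior, Borel--Serre compactification). Concretely, for any right $\Sigma_Q$-module $M$ giving a local system on $S_K$, there is a $\cH(K)$-equivariant long exact sequence
\[
\cdots \to \hc{i}(S_K,M) \to \h^i(S_K,M) \to \h^i_\partial(S_K,M) \to \hc{i+1}(S_K,M) \to \cdots.
\]
The dual of the natural inclusion $V_\lambda^G \hookrightarrow \A_{\lambda,0}^Q$ gives a map of $\Sigma_Q$-modules $\D_\lambda^Q \to V_\lambda^{G,\vee}$, and hence a morphism between the two long exact sequences, one with $\D_\lambda^Q$-coefficients and one with $V_\lambda^{G,\vee}$-coefficients.

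Since localisation at the maximal ideal $\ker(\phi)$ is exact, we obtain a morphism of long exact sequences on $\phi$-parts. By Remark \ref{rem:both non-critical}, the assumption that $\phi$ is $Q$-non-critical slope (or more generally $Q$-non-critical for both $\hc{\bullet}$ and $\h^\bullet$) implies that the vertical maps
\[
\hc{i}(S_K,\D_\lambda^Q)_\phi \isorightarrow \hc{i}(S_K,V_\lambda^{G,\vee})_\phi, \qquad
\h^i(S_K,\D_\lambda^Q)_\phi \isorightarrow \h^i(S_K,V_\lambda^{G,\vee})_\phi
\]
are isomorphisms for all $i$. Applying the five lemma at each position, the induced map on boundary cohomology
\[
\h^i_\partial(S_K,\D_\lambda^Q)_\phi \isorightarrow \h^i_\partial(S_K,V_\lambda^{G,\vee})_\phi
\]
is also an isomorphism for every $i$. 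Consequently one of these vanishes if and only if the other does, which is precisely the equivalence of $Q$-strongly interior and interior. The argument is purely formal once one has non-criticality for both flavours of cohomology, so there is no real obstacle — the only subtlety is invoking Remark \ref{rem:both non-critical} to ensure we may apply Theorem \ref{thm:Q-classicality} to both $\hc{\bullet}$ and $\h^\bullet$ simultaneously.
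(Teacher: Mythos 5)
Your proposal is correct and uses essentially the same approach as the paper: both arguments compare the excision long exact sequences with $\D_\lambda^Q$- and $V_\lambda^{G,\vee}$-coefficients after localising at $\phi$, using $Q$-non-criticality for both flavours of cohomology to get isomorphisms on $\hc{\bullet}$ and $\h^\bullet$. The only cosmetic difference is that you apply the five lemma to deduce an isomorphism on boundary cohomology, whereas the paper notes that (strong) interiority is equivalent to the map $\hc{\bullet}\to\h^\bullet$ being an isomorphism in all degrees and transfers this across the diagram; these are the same deduction from the same exact sequence.
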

\begin{proof}
	That the second statement is a generalisation of the first is Remark \ref{rem:both non-critical}. To prove the second, combining specialisation $\rho_\lambda$ with the excision exact sequence,
	we have a commutative and Hecke-equivariant diagram with exact rows, which we localise at $\phi$:
	\[
	\xymatrix@R=7mm{
		\cdots \ar[r] & \h^{\bullet}_{\mathrm{c}}(S_K,\D^Q_\lambda(L))_\phi \ar[r]^{\iota_1}\ar[d]^{\rho_\lambda}  & \h^{\bullet}(S_K,\D^Q_\lambda(L))_\phi \ar[r]\ar[d]^{\rho_\lambda} &\h^{\bullet}_\partial(S_K,\D_\lambda^Q(L))_\phi \ar[r]\ar[d]^{\rho_\lambda} & \cdots\\
		\cdots \ar[r] & \h^{\bullet}_{\mathrm{c}}(S_K,V_\lambda^\vee(L))_\phi \ar[r]^{\iota_2}  & \h^{\bullet}(S_K,V_\lambda^\vee(L))_\phi \ar[r] &\h^{\bullet}_\partial(S_K,V_\lambda^\vee(L))_\phi \ar[r] & \cdots
	}
	\]
	Being $Q$-strongly interior (resp.\ interior) is equivalent to $\iota_1$ (resp.\ $\iota_2$) being an isomorphism in every degree. 
	By $Q$-non-criticality, the first and second vertical maps are isomorphisms; thus $\iota_1$ is an isomorphism in every degree if and only if $\iota_2$ is, from which we conclude. 
\end{proof}

Let $x = x(\phi) \in \cE_{\lambda_0}^{Q,*}(L)$ be as above, of weight $\lambda = w(x)$, and suppose that:
\begin{itemize}\setlength{\itemsep}{0pt}
	\item[(C1)] $x$ is a classical cuspidal point,
	\item[(C2)] $\phi$ is $Q$-non-critical for $\hc{\bullet}$ and $\h^{\bullet}$,
	\item[(C3)] and that every neighbourhood of $\lambda$ in $\cW^Q_{\lambda_0}$ contains a Zariski-dense set of regular weights. (Note this is automatic if $\lambda$ itself is a regular weight).
\end{itemize}

\begin{proposition}\label{prop:zariski dense cuspidal}
Let $\cV$ be an irreducible component of $\cE_{\lambda_0}^{Q,*}$ passing through $x$. If $\dim\cV = \dim \cW_{\lambda_0}^Q,$ then $\cV$ contains a Zariski-dense set $\cV^{\mathrm{cl}}$ of classical cuspidal (cohomological) points.
\end{proposition}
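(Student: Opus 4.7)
The plan is to propagate classicality and cuspidality from $x$ to a Zariski-dense set of nearby points of $\cV$, combining the dimension hypothesis with Theorem~\ref{thm:Q-classicality} and a coherent-sheaf spreading argument for boundary cohomology.

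First I would set up the local geometry. Pick a slope-adapted affinoid $(\cU,h)$ with $\lambda \in \cU$ and $v_p(\phi(U_t)) \leq h$, and let $\cV_0$ be the irreducible component containing $x$ of $\cV \cap \cE^{Q,*}_{\cU,h}$. Because $\cE^{Q,*}_{\cU,h} \to \cU$ is finite and $\dim \cV = \dim \cW^Q_{\lambda_0}$, the restriction $w\colon \cV_0 \to \cU$ is finite and dominant, so preimages of Zariski-dense subsets of $\cU$ are Zariski-dense in $\cV_0$, hence in $\cV$. Using (C3), after possibly shrinking $\cU$, there is a Zariski-dense set $\Sigma \subset \cU$ of regular classical weights.

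Next I would check that, after further shrinking $\cU$, every $y' \in w^{-1}(\Sigma) \cap \cV_0$ with eigensystem $\phi_{y'}$ is classical cuspidal cohomological. For classicality, note that the eigenvalues $\phi_{y'}(U_i)$ are analytic on $\cV_0$ and nonzero at $x$, so after shrinkage $v_p(\phi_{y'}(U_i)) = h_i$ identically on $\cV_0$. For $\lambda' \in \Sigma$ sufficiently $p$-adically close to $\lambda$, each pairing $\langle \lambda', \alpha_i^\vee\rangle$ differs from $\langle \lambda, \alpha_i^\vee\rangle$ by a multiple of a large power of $p$, and regularity of $\lambda'$ precludes the pairing from decreasing; thus $h^{\mathrm{crit}}(t_i,\alpha_i,\lambda') \geq h^{\mathrm{crit}}(t_i,\alpha_i,\lambda) > h_i$, and Theorem~\ref{thm:Q-classicality} at $\lambda'$ shows $\phi_{y'}$ is classical. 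For cuspidality, I would run the eigenvariety machine of \S\ref{sec:global eigenvariety} with boundary cohomology in place of $\hc{\bullet}$: the modules $\h^\bullet_\partial(S_K, \D_\cU^Q)^{\leq h}$ assemble into a coherent sheaf $\sM_\partial$ on $\cE^{Q,*}_{\cU,h}$. By (C2) combined with Lemma~\ref{lem:strongly interior}, the stalk $(\sM_\partial)_x$ vanishes; since $\mathrm{Supp}(\sM_\partial)$ is Zariski closed, it misses a Zariski-open neighbourhood $\cV_1 \subset \cV_0$ of $x$. Every $y' \in \cV_1$ is therefore $Q$-strongly interior, and then by Lemma~\ref{lem:strongly interior} applied at $\lambda'$ (which is legitimate thanks to $Q$-non-criticality just established) $\phi_{y'}$ is also interior for $V_{\lambda'}^{G,\vee}$. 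Because $\lambda'$ is regular, a standard Borel--Wallach / Franke / Li--Schwermer vanishing argument identifies interior cohomological Hecke eigensystems with cuspidal ones, so $\phi_{y'}$ is cuspidal. Taking $\cV^{\mathrm{cl}} \defeq w^{-1}(\Sigma) \cap \cV_1$ then gives the desired Zariski-dense set.

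The main obstacle is coordinating the successive shrinkages of $\cU$: one must simultaneously stabilise the $p$-adic slopes of the $U_i$, arrange $h^{\mathrm{crit}}(t_i,\alpha_i,\lambda') \geq h^{\mathrm{crit}}(t_i,\alpha_i,\lambda)$ for all regular classical $\lambda' \in \cU$, and force vanishing of the boundary-cohomology sheaf in a neighbourhood of $x$. Each individually follows from coherence/continuity, but one has to check that after all three shrinkages the surviving classical weights remain Zariski-dense, which is exactly the content of (C3). A secondary subtlety is that (C2) gives vanishing only after localisation at $\phi$; upgrading this to vanishing on an honest neighbourhood of $x$ uses that the Hecke algebra acts on a coherent $\cO_{\cE}$-sheaf and that $\phi$ corresponds to a closed point of $\cE^{Q,*}_{\cU,h}$.
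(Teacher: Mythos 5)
Your argument for cuspidality — using the boundary Tor spectral sequence to build a coherent sheaf $\sM_\partial$ on the local eigenvariety, noting its rigid localisation at $x$ vanishes, and spreading that vanishing to a neighbourhood of $x$ before comparing to $V_{\lambda'}^{G,\vee}$ via Lemma~\ref{lem:strongly interior} and \cite[\S5]{LS04} — is essentially the paper's argument and is fine.

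The classicality step, however, contains a genuine gap. You write that $h^{\mathrm{crit}}(t_i,\alpha_i,\lambda') \geq h^{\mathrm{crit}}(t_i,\alpha_i,\lambda) > h_i$, where $h_i = v_p(\phi(U_i))$ at $x$, and this inequality is what lets you invoke Theorem~\ref{thm:Q-classicality} at $\lambda'$. But the strict inequality $h^{\mathrm{crit}}(t_i,\alpha_i,\lambda) > h_i$ is precisely the assertion that $\phi$ is $Q$-\emph{non-critical slope} at $\lambda$, and that is not among the hypotheses: (C2) only assumes $\phi$ is $Q$-\emph{non-critical} (the eigenspace isomorphism of Definition~\ref{def:Q non-critical}). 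Theorem~\ref{thm:Q-classicality} is a one-way implication — $Q$-non-critical slope implies $Q$-non-critical, not conversely — so a perfectly admissible input $x$ for this proposition can have $h_i \geq h^{\mathrm{crit}}(t_i,\alpha_i,\lambda)$, and your chain of inequalities collapses. The secondary issue (constancy of $v_p(\phi_{y'}(U_i))$ near $x$) can be repaired by shrinking, but it exacerbates the gap: it locks the slopes at the value $h_i$ exactly when you need an independent reason to believe those values are subcritical.

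The paper's proof avoids this by anchoring the comparison to the \emph{slope-adapted bound} $h$ rather than to the slopes at $x$. Using (C3) one picks a Zariski-dense set $\cU^{\mathrm{cl}} \subset \cU$ of regular classical weights $\lambda'$ for which $h^{\mathrm{crit}}(t_i,\alpha_i,\lambda')$ exceeds $h$ itself (such $\lambda'$ exist in abundance because the critical bounds grow linearly with the weight pairings). Then any eigensystem in the slope $\leq h$ part at weight $\lambda'$ automatically satisfies the $Q$-non-critical slope criterion, and classicality follows without any reference to whether $\lambda$ itself has small slope. To repair your proof, replace the `$p$-adically close and pairing non-decreasing' argument with `pairings $\langle\lambda',\alpha_i^\vee\rangle$ made so large that $h^{\mathrm{crit}}(t_i,\alpha_i,\lambda') > h$'; the remainder of your write-up then goes through.
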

\begin{proof}
	If an open neighbourhood of $x$ in $\cV$ contains a Zariski-dense set of such points, then $\cV$ does. Thus we work locally, and assume $\cV$ is a component of a local piece $\cE_{\cU,h}^{Q,*}$ containing $x$. 
			
	By (C3), we may always pick a Zariski-dense subset $\cU^{\mathrm{cl}} \subset \cU$ of classical (regular) weights $\lambda'$ for which $h$ is a $Q$-non-critical slope. Let $\cV^{\mathrm{cl}}$ denote the set of $y \in \cE_{\cU,h}^{Q,*}$ with $\lambda_y \defeq w(y) \in \cU^{\mathrm{cl}}$; this set is necessarily Zariski-dense in $\cV$ as $\dim(\cU) = \dim(\cV)$. If $y \in \cV^{\mathrm{cl}}$, then by Propositions \ref{prop:U to lambda} (for $* = \bullet$) or \ref{prop:U to lambda d} (for $* = d$) it corresponds to a system of eigenvalues $\phi_y$ occurring in 
	\[
	\hc{*}(S_K,\D_{\lambda_y}^Q(L))^{\leq h} \cong \hc{*}(S_K,V_{\lambda_y}^\vee(L))^{\leq h},
	\]
	isomorphic since $h$ is a $Q$-non-critical slope for $\lambda_y$. Hence $\phi_y$ is a system of eigenvalues in the classical cohomology. It remains to show $\phi_y$  is cuspidal (i.e.\ occurs in the cuspidal cohomology).
	
	Since $\tilde\pi$ is cuspidal, then by a theorem of Borel (see for example \cite[\S0]{LS04}) the associated eigensystem $\phi$ is interior; thus it is $Q$-strongly interior by Lemma \ref{lem:strongly interior}.	
	 Analogously to \cite[Theorem 4.5.1(ii)]{Han17}, from the boundary Tor spectral sequence (Proposition \ref{prop:spectral}) we deduce that $\h^\bullet_\partial(S_K,\D_{\cU}^Q)_\phi = 0$. The boundary cohomology yields a coherent sheaf on the eigenvariety, and we see that the rigid localisation of this sheaf at $x$ -- which is a faithfully flat extension of the algebraic localisation -- must be zero. Thus, perhaps after shrinking the neighbourhoods $\cU$ and $\cV$, this vanishing lifts to $\cV$. Thus for any $y \in \cV$, we have $\h^\bullet_\partial(S_K,\D_{\cU}^Q)_{\phi_y} = 0$; and localising the boundary spectral sequence at $y$, we deduce that $\h^{\bullet}_\partial(S_K,\D_{\lambda_y}^Q(L))_{\phi_y} = 0$ and $\phi_y$ is strongly interior.
	
	Now suppose $y \in\cV^{\mathrm{cl}}$. Since $\phi_y$ is $Q$-non-critical slope, by Lemma \ref{lem:strongly interior} it is interior. But for regular weights, a class is interior if and only if it is cuspidal \cite[Prop. 5.2, \S5.3]{LS04}, so $\phi_y$ appears in the cuspidal cohomology, as required.
\end{proof}

As a special case where the dimension hypothesis on $\cV$ will always be satisfied, we have:

\begin{corollary}
	Suppose $\cG^{\mathrm{der}}(\R)$ admits discrete series, and let $x  \in \cE_{\lambda_0}^{Q,*}$ satisfy (C1-3). Every irreducible component of $\cE_{\lambda_0}^{Q,*}$ through $x$ contains a Zariski-dense set of classical cuspidal points.
\end{corollary}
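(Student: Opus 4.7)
The plan is to reduce to Proposition \ref{prop:zariski dense cuspidal}, which applied to each irreducible component $\cV$ of $\cE_{\lambda_0}^{Q,*}$ through $x$ requires the dimension equality $\dim \cV = \dim \cW_{\lambda_0}^Q$. Every such $\cV$ lies inside some local piece $\cE_{\cU,h}^{Q,*}$; since $\bT_{\cU,h}^{Q,*}$ is finite over $\cO(\cU)$ by construction, the weight map $w$ is finite and hence $\dim \cV \leq \dim \cU = \dim \cW_{\lambda_0}^Q$. Proposition \ref{prop:dimension} supplies the matching lower bound $\dim \cV \geq \dim \cW_{\lambda_0}^Q - \ell_Q(x)$, so it suffices to prove that the overconvergent defect $\ell_Q(x)$ vanishes.

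By Definition \ref{def:defect}, this amounts to showing $\hc{i}(S_K, \D_\lambda^Q(L))_\phi$ is nonzero for at most one value of $i$. Condition (C2) together with Definition \ref{def:Q non-critical} gives isomorphisms $\hc{i}(S_K, \D_\lambda^Q(L))_\phi \cong \hc{i}(S_K, V_\lambda^\vee(L))_\phi$ for every $i$, so the problem reduces to a concentration statement for classical compactly supported cohomology. Condition (C1) supplies a cuspidal automorphic representation $\pi_x$ with eigensystem $\phi$; by Borel's theorem $\h^\bullet_\partial(S_K, V_\lambda^\vee)_\phi = 0$, so $\hc{\bullet}(S_K, V_\lambda^\vee)_\phi$ embeds into $\h^\bullet(S_K, V_\lambda^\vee)_\phi$, whose $\phi$-part is described (with multiplicity) by the relative Lie algebra cohomology $\h^\bullet(\fg, K_\infty^\circ; \pi_{x,\infty} \otimes V_\lambda^\vee)$.

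The Harish-Chandra hypothesis now enters: because $\cG^{\mathrm{der}}(\R)$ admits discrete series and $\pi_x$ is cuspidal and cohomological, $\pi_{x,\infty}$ belongs to a discrete-series $L$-packet attached to $\lambda$. Such representations are tempered, and by the Vogan--Zuckerman classification their defect $\ell_0(\pi_{x,\infty})$ vanishes; consequently $\h^i(\fg, K_\infty^\circ; \pi_{x,\infty} \otimes V_\lambda^\vee)$ is concentrated in a single middle degree $q_0$. This forces $\hc{\bullet}(S_K, V_\lambda^\vee)_\phi$, and therefore $\hc{\bullet}(S_K, \D_\lambda^Q(L))_\phi$, to live only in degree $q_0$. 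Hence $\ell_Q(x) = 0$, and combining with the first paragraph we obtain $\dim \cV = \dim \cW_{\lambda_0}^Q$. Proposition \ref{prop:zariski dense cuspidal} then produces the desired Zariski-dense set of classical cuspidal points on $\cV$.

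The main obstacle is the mild subtlety that $\lambda$ need not itself be regular, so one cannot invoke the shortcut ``interior cohomology equals cuspidal cohomology'' (which holds only for regular weights). Instead the Vogan--Zuckerman concentration must be applied directly to $\pi_{x,\infty}$, whose essential hypothesis is the existence of discrete series on $\cG^{\mathrm{der}}(\R)$ rather than regularity of $\lambda$, so the argument goes through uniformly.
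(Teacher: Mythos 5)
Your overall strategy matches the paper's: prove $\ell_Q(x)=0$, deduce $\dim\cV=\dim\cW_{\lambda_0}^Q$ via Proposition \ref{prop:dimension} and finiteness of the weight map, then apply Proposition \ref{prop:zariski dense cuspidal}. Your use of (C2) to pass from the overconvergent defect to a concentration statement for classical compactly supported cohomology, and then of Borel's interiority result, is correct and makes explicit what the paper treats as automatic.

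The genuine gap is in the concentration step, where the paper simply cites \cite[\S4-5]{LS04}, and you instead argue directly via $(\fg,K_\infty^\circ)$-cohomology and Vogan--Zuckerman. The claim that $\pi_{x,\infty}$ belongs to a discrete-series $L$-packet does not follow merely from ``$\cG^{\mathrm{der}}(\R)$ admits discrete series and $\pi_x$ is cuspidal cohomological.'' That implication is standard when $\lambda$ is \emph{regular}: a regular infinitesimal character forces any cohomological unitary $(\fg,K_\infty)$-module to be tempered, hence discrete series when $\ell_0(G)=0$. At singular weights it can fail: non-tempered cuspidal representations whose archimedean component is an $A_\fq(\lambda)$-module with $\fq$ a proper $\theta$-stable parabolic do occur (CAP-type representations provide examples), and for these the Vogan--Zuckerman concentration range is $[R_\fq, R_\fq + \ell_0(\pi_\infty)]$ where the defect $\ell_0(\pi_\infty)$ depends on the Levi of $\fq$, not just on $G$, and need not vanish even when $\ell_0(G)=0$. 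Your final paragraph asserts that the Vogan--Zuckerman argument needs only existence of discrete series on $\cG^{\mathrm{der}}(\R)$ and runs uniformly without regularity, but that is exactly the point that fails: temperedness of $\pi_{x,\infty}$ is what makes $\ell_0(\pi_{x,\infty})=0$, and temperedness is not free at singular $\lambda$. To close the gap you would need either an appeal to the cited structure theory in \cite{LS04} (as the paper does) or an additional hypothesis guaranteeing temperedness of $\pi_{x,\infty}$, rather than the general Vogan--Zuckerman machinery alone.

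A minor secondary point: Proposition \ref{prop:dimension} is stated for $\cE_{\lambda_0}^{Q,\bullet}$, whereas the corollary allows $*=d$. This is harmless once concentration in a single degree $q_0$ is established (then the $\phi$-localised local pieces of $\cE_{\lambda_0}^{Q,\bullet}$ and $\cE_{\lambda_0}^{Q,q_0}$ agree), but it is worth flagging when passing between the two.
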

\begin{proof}
	The conditions on $\cG$ and $x= x(\phi)$ ensure that $\phi$ appears in only one degree of classical cohomology (e.g.\ \cite[\S4-5]{LS04}); and then Proposition \ref{prop:dimension} ensures that any such irreducible component has dimension $\dim \cW_{\lambda_0}^Q$. We conclude by Proposition \ref{prop:zariski dense cuspidal}.
\end{proof}

\begin{remark}
	The assumptions on regular weights ensure control over the \emph{classical} cohomology, and in situations where we have a more complete understanding of the classical cohomology -- for example, the case of $\GL_2$ -- we may relax these conditions. 
	
	For $B$-families, every affinoid neighbourhood of a classical weight $\lambda_0$ contains a Zariski-dense set of regular classical weights. If $\lambda_0$ is not regular, this is not necessarily true in the parahoric case. For example, consider $\cG= \GL_4$, and $\lambda_0 = (0,0,0,0)$, and $Q$ with Levi $\GL_2 \times \GL_2$. Then every weight $\lambda = (\lambda_1,...,\lambda_4) \in \cW^Q_{\lambda_0}$ has $\lambda_1 = \lambda_2$ and $\lambda_3 = \lambda_4$, so this space contains \emph{no} regular weights.
\end{remark}

\begin{remark}
	Suppose $\cG^{\mathrm{der}}(\R)$ does not admit discrete series. Then if a point $x$ is cuspidal $Q$-non-critical, then $\ell_Q(x) \geq 1$. When $Q = B$, \cite[Thm.\ 4.5.1]{Han17} says that irreducible components through such $x$ \emph{never} have maximal dimension (that is, dimension equal to $\dim\cW$), and conjecturally the inequality of Proposition \ref{prop:dimension} is an equality. This conjecture is false in the general parahoric setting. Indeed, in \cite{BDW20} examples are given of $Q$-parabolic families of dimension $\mathrm{dim}\cW_{\lambda_0}^Q$ in the setting of $\cG = \mathrm{Res}_{F/\Q}\GL_{2n}$, even though $\ell_Q(x) = n-1$. Conceptually these families arise through transfer from $\mathrm{GSpin}_{2n+1}$ (where we \emph{do} have discrete series).
\end{remark}

\footnotesize
\renewcommand{\baselinestretch}{1}
\renewcommand{\refname}{\normalsize References} 
\bibliography{master_references}{}
\bibliographystyle{alpha}
\Addressesshort

\end{document}